\documentclass[11pt,reqno]{amsart}

\usepackage{amsmath, amsthm, amssymb}
\usepackage{amsfonts}
\usepackage{bm,mathrsfs}
\usepackage{enumitem}
\usepackage{mathrsfs}

\usepackage{hyperref}

\usepackage[ansinew]{inputenc}
\usepackage[dvips]{epsfig}
\usepackage{graphicx}
\usepackage[english]{babel}

\usepackage{thmtools}
\theoremstyle{plain}
\declaretheorem[title=Theorem, parent=section]{theorem}
\declaretheorem[title=Lemma,parent=section]{lemma}
\declaretheorem[title=Proposition,parent=section]{proposition}
\declaretheorem[title=Corollary,parent=section]{corollary}
\declaretheorem[title=Problem, parent=section]{problem}

\theoremstyle{remark}
\declaretheorem[title=Remark, parent=section]{remark}

\theoremstyle{definition}
\declaretheorem[title=Definition,parent=section]{definition}
\declaretheorem[title=Assumption, numbered=no]{assumption*}

\numberwithin{equation}{section}

\usepackage[backgroundcolor=white, bordercolor=blue,
linecolor=blue]{todonotes}

\usepackage{dsfont}
\usepackage{bbm}
\usepackage{mathrsfs}

\newcommand{\average}{{\mathchoice {\kern1ex\vcenter{\hrule height.4pt
width 6pt depth0pt} \kern-9.7pt} {\kern1ex\vcenter{\hrule
height.4pt width 4.3pt depth0pt} \kern-7pt} {} {} }}

\begin{document}
\allowdisplaybreaks
\title{Regularity theory for a class of degenerate or singular fully nonlinear elliptic equations with Hamiltonian terms and applications}

\author{Jiangwen Wang}
\author{Feida Jiang$^*$}

\address{School of Mathematics and Shing-Tung Yau Center of Southeast University, Southeast University, Nanjing 211189, P.R. China}
\email{\url{jiangwen\_wang@seu.edu.cn}}

\address{School of Mathematics and Shing-Tung Yau Center of Southeast University, Southeast University, Nanjing 211189, P.R. China; Shanghai Institute for Mathematics and Interdisciplinary Sciences, Shanghai 200433, P.R. China}
\email{\url{jiangfeida@seu.edu.cn}}

\date{\today}
	\thanks{*corresponding author}

\keywords{regularity; degenerate or singular equations; viscosity solution; Hamiltonian terms}

\subjclass[2020]{35J70, 35J75, 35D40}

\allowdisplaybreaks

\begin{abstract}

In this paper, we investigate regularity properties for viscosity
solutions to a general class of degenerate or singular fully nonlinear
elliptic equations with Hamiltonian terms,
\[
\left\{
\begin{alignedat}{2}
\Phi(|Du|,x)F(D^{2}u,x)+H(|Du|,x)
&=f(x) \quad && \text{in } \Omega,\\
u&=g \quad && \text{on } \partial\Omega.
\end{alignedat}
\right.
\]
Here, $F$ is uniformly elliptic, while $\Phi$ and $H$ satisfy suitable
structural and growth conditions allowing for both degenerate and
singular regimes.

Our first result establishes sharp global $C^{1,\alpha}$ regularity
for this general class of equations, thereby providing a unified
regularity framework for both degenerate and singular regimes.

We next develop an oscillation-based approach for fully nonlinear
equations with unbalanced variable degeneracy and Hamiltonian terms.
Under a H\"older-type decay assumption on $ f $, we derive
sharp boundary $C^{1,\beta'}$ estimates with an explicit exponent, and establish a quantitative non-degeneracy estimate in the singular region.

Finally, as applications of the general theory, under a suitable viscosity curvature condition on the level sets of the solution, we establish global $C^{1,\frac{1}{3}}$ regularity for the infinity-Poisson and global
$C^{1,\frac{1}{p-1}}$ regularity for the $p$-Poisson with $p>2$. These results may provide a new perspective on these two long-standing open problems.
\end{abstract}

\allowdisplaybreaks

\maketitle
\tableofcontents

\section{Introduction}\label{Intro}

This paper is concerned with global gradient regularity for viscosity solutions to degenerate or singular fully nonlinear elliptic equations with Hamiltonian terms, under the Dirichlet boundary condition
\begin{equation}
\label{Main:eq1}
\left\{
     \begin{alignedat}{2}
         \Phi(|Du|,x) F(D^{2}u,x)+ H(|Du|, x)  & = f(x)         \quad   &&   \text{in} \ \ \Omega     ,    \\
          u(x)   &  = g(x)   \quad  &&   \text{on} \ \  \partial \Omega.        \\
     \end{alignedat}
     \right.
\end{equation}
Here, \( F : \mathrm{Sym}(n) \to \mathbb{R} \) is a uniformly \((\lambda, \Lambda)\)-elliptic operator in the sense of \hyperref[A1]{\bf (A1)}, \( \Phi : \Omega \times [0, \infty) \to [0, \infty) \) is a continuous map whose gradient degeneracy and singularity are described in \hyperref[A3]{\bf (A3)}, \( f \) and \( g \) are sufficiently regular functions satisfying the conditions in \hyperref[A5]{\bf (A5)}, $ H: \mathbb{R}^n \times \Omega \to \mathbb{R} $ fulfilling the assumptions in \hyperref[A4]{\bf (A4)}, and \( \Omega \) is a \( C^2 \)-domain as specified in \hyperref[A6]{\bf (A6)}. We recall that, as a consequence of the Krylov-Safonov theory \cite{CC95}, viscosity solutions to the homogeneous equation
\[
F(D^2 u) = 0 \quad \text{in } \Omega,
\]
with \( F \) uniformly \((\lambda, \Lambda)\)-elliptic, belong to \( C^{1,\alpha_{0}}_{\mathrm{loc}}(\Omega) \) for some constant \( \alpha_{0} \equiv \alpha_{0}(n, \lambda, \Lambda) \in (0, 1) \).

\vspace{2mm}

When Hamiltonian terms $ H \equiv 0 $, equation \eqref{Main:eq1}   incorporates an inhomogeneous degenerate or singular term, patterned after the integrand of an Uhlenbeck-type functional:
\begin{equation}\label{Section1:eq2}
    v \mapsto \int_{\Omega} G(|Dv|, x) \, dx
\end{equation}
with an integral density $G : \Omega \times [0, \infty) \to [0, \infty)$. From a variational viewpoint, \eqref{Section1:eq2} is a general non-autonomous functional that includes several prototypical models relevant to regularity theory, such as $ p$-growth \cite{GG82}, $ p(x)$-growth \cite{AM01, AM05}, Orlicz growth \cite{BBL21}, $ (p,q)$-double phase \cite{CM15a, CM15b}, $ (p(x), q(x))$-double phase \cite{BL21}, borderline case of double phase \cite{BCM15}, and reference therein.

  \vspace{2mm}

Non-variational counterparts to these models take the form of degenerate fully nonlinear equations. A well-known prototype is
\begin{equation}\label{Section1:eq3}
|Du|^{p} F(D^{2}u) = f   \quad  \text{in}  \quad \Omega.
\end{equation}
The pioneering work was carried out by Imbert and Silvestre in \cite{IS13}, where they established interior $C^{1,\alpha}$ regularity for viscosity solutions to \eqref{Section1:eq3}. Subsequently, Ara\'{u}jo et al. \cite{ART15} proved optimal interior $C^{1,\alpha}$ regularity under the assumption that $F$ is convex or concave. Recently, the local $ C^{1, \alpha} $ regularity of viscosity solution to \eqref{Section1:eq3} has been the subject of intensive study, such as $ (p,q)$-double phase degeneracy \cite{De21}, $ (p(x), q(x))$-double phase degeneracy \cite{FRZ21}, $ p(x) $-degeneracy \cite{BPRT20}, general $ \Phi $-degeneracy \cite{BBLL24a} and so on. It is also worth mentioning that in the recent work \cite{HJMZ26a}, the authors examined sharp interior $ C^{1,\alpha} $ regularity of solutions to fully nonlinear degenerate or singular equations with Hamiltonian terms
\begin{equation}\label{Section1:eq4}
  \Phi(|Du|,x) F(D^{2}u,x) + H(|Du|, x) = f   \quad  \text{in}  \quad \Omega.
\end{equation}
For further results on interior regularity, the interested reader is referred to \cite{APPT22, AN25a, KL26, N25, WJ26a, AN25b}.

  \vspace{2mm}

Much progress has been made in understanding the boundary regularity of solution to \eqref{Main:eq1} under the Dirichlet boundary condition. To be more precise, when Hamiltonian terms $ H \equiv 0 $, the global $ C^{1,\alpha} $ regularity results for degenerate fully nonlinear equations were developed in \cite{AS23, BD14} for $ \Phi(t,x) = t^{p} $ with $ p \geq 0 $, in \cite{BSRR23} for $\Phi(t,x) = t^{p(x)} + a(x)t^{q(x)} $ with $ 0 \leq p(\cdot) \leq q(\cdot) $, and in \cite{BBLL24b} for general $ \Phi $, see \hyperref[A3]{\bf (A3)}. Very recently, the authors of the current paper, in \cite{WJ26c}, establish global $ C^{1} $ regularity of solutions to degenerate equation with form
\begin{equation*}
   \big[\sigma_{1}(|Du|)+a(x)\sigma_{2}(|Du|)\big] F(D^{2}u) = f        \quad   \text{in}  \quad  \Omega
\end{equation*}
provided that $ \sigma_{2} : \mathbb{R}_{+} \rightarrow \mathbb{R}_{+} $ is a function whose inverse has a Dini-continuous modulus of continuity near the origin.

  \vspace{2mm}

In view of the above motivation and the previous results, the following problem naturally emerges:

\begin{problem}\label{Section1:problem1}
Can we establish the optimal global $ C^{1, \alpha} $ regularity for viscosity solutions of \eqref{Main:eq1}?
\end{problem}

To address Problem~\ref{Section1:problem1}, we first introduce the structural assumptions needed to state our main result:

\label{A1} {\bf (A1)} {\bf (Uniformly ellipticity of $ F$).}
$F$ is uniformly elliptic with $F(\mathrm{O}_n, x) = 0$; namely, there exist constants $0 < \lambda \leq \Lambda$ such that, for all $\mathrm{M}, \mathrm{N} \in \mathrm{Sym}(n)$,
\[
  \mathscr{P}^{-}_{\lambda,\Lambda}(\mathrm{M}-\mathrm{N}) \leq F(\mathrm{M}, x) - F(\mathrm{N}, x)
  \leq \mathscr{P}^{+}_{\lambda,\Lambda}(\mathrm{M}-\mathrm{N}),
\]
where $ x \in \Omega$ and $\mathscr{P}^{\pm}$ denote the Pucci extremal operators \cite{CC95}, i.e.,
\[
  \mathscr{P}^{+}_{\lambda,\Lambda}(\mathrm{X})
  := \lambda \sum_{e_{i}<0} e_{i}(\mathrm{X}) + \Lambda \sum_{e_{i}>0} e_{i}(\mathrm{X}),
  \quad \text{and} \quad
  \mathscr{P}^{-}_{\lambda,\Lambda}(\mathrm{X})
  := \lambda \sum_{e_{i}>0} e_{i}(\mathrm{X}) + \Lambda \sum_{e_{i}<0} e_{i}(\mathrm{X});
\]

\label{A2} {\bf (A2)} {\bf (Continuity on the coefficients of $ F$).} We assume a uniform continuity on the coefficients of $ F $, namely, there exists a constant $ \mathrm{C}_{F} > 0 $ and $ 0 < \theta < 1 $ such that
\begin{equation*}
  \mathrm{osc}_{F}(x,y):= \sup_{\mathrm{M} \in \mathrm{Sym}(n)\setminus\{0\}} \frac{|F(\mathrm{M}, x)-F(\mathrm{M}, y)|}{||\mathrm{M}||} \leq \mathrm{C}_{F} |x-y|^{\theta},
\end{equation*}
for all $ x, y \in \Omega $.

\label{A3} {\bf (A3)} {\bf (The monotonicity of $ \Phi$).} The function \( \Phi : [0, \infty) \times \Omega \to [0, \infty) \) is a continuous map satisfying the following properties:

(i) there exist constants \( s(\Phi) \geq i(\Phi) > -1 \) such that the map \( t \mapsto \frac{\Phi(t,x)}{t^{i(\Phi)}} \) is almost non-decreasing with constant \( L \geq 1 \) in \( (0, \infty) \) in the sense that
\[\frac{\Phi(t,x)}{t^{i(\Phi)}} \leq L \frac{\Phi(s,x)}{s^{i(\Phi)}} \quad \text{whenever} \quad 0 < t \leq s < \infty \text{ and } x \in \Omega,\]
and the map \( t \mapsto \frac{\Phi(t,x)}{t^{s(\Phi)}} \) is almost non-increasing with constant \( L \geq 1 \) in \( (0, \infty) \) in the sense that
\[L \frac{\Phi(t,x)}{t^{s(\Phi)}} \geq \frac{\Phi(s,x)}{s^{s(\Phi)}} \quad \text{whenever } 0 < t \leq s < \infty \text{ and } x \in \Omega;\]

(ii) there exist constants \( 0 < \nu_0 \leq \nu_1 \) such that \( \nu_0 \leq \Phi(1,x) \leq \nu_1 \) for all \( x \in \Omega \).

\label{A4} {\bf (A4).} {\bf (The growth of Hamiltonian term $ H$).} \( H : \mathbb{R}^n \times \Omega \to \mathbb{R} \) is continuous and there exist constants \( \mathcal{M}_{1}, \mathcal{M}_{2} > 0 \) and \( 0 < m \leq 1 + i(\Phi) \) such that
\[|H(t,x)| \leq \mathcal{M}_{1} + \mathcal{M}_{2}|t|^m\]
for every \( t \in \mathbb{R}^n, x \in \Omega \).

\label{A5} {\bf (A5).} {\bf (The regularity of $ f $ and $ g$).} The source term \( f \) belongs to \( C^{0}(\Omega) \cap L^\infty(\Omega) \) and $ g \in C^{1, \beta_{g}}(\partial \Omega) $ for some $ 0 < \beta_{g} < 1 $.

\label{A6} {\bf (A6).} {\bf (The regularity of $ \Omega$).} $ \Omega \subset \mathbb{R}^{n} $ is a bounded $ C^{2}$-domain.

The assumption \hyperref[A6]{\bf (A6)} was inspired by the approach introduced in \cite{BD14, BBLL24b}. Specifically, we may assume that $ 0 \in \partial \Omega $, and there exists a ball $ B_{1} \subset \mathbb{R}^{n} $ and $ \varphi \in C^{2}(\mathbb{R}^{n-1}) $ such that $ \varphi(0) = 0$, $  \nabla \varphi(0) = 0 $ and
\begin{equation*}
  \Omega \cap B_{1}  \subset \{ y \in B_{1}: y_{n} > \varphi(y')   \},  \quad    \partial \Omega \cap B_{1} =  \{ y \in B_{1}: y_{n} = \varphi(y')   \}.
\end{equation*}

We now establish the first main result, concerning global optimal $ C^{1,\alpha} $ regularity of viscosity solutions for \eqref{Main:eq1}. which reads as follows.
\begin{theorem}[{Global $ C^{1, \alpha}$ regularity}]
\label{Thm1}
Suppose that the assumptions \hyperref[A1]{\bf (A1)}--\hyperref[A6]{\bf (A6)} hold. Let $ u \in C^{0}(\overline{\Omega}) $ be a viscosity solution to \eqref{Main:eq1}, and $ \alpha $ be chosen to satisfy
\begin{equation*}
\alpha \in \left\{
     \begin{alignedat}{2}
         & (0,\alpha_{0}) \cap \bigg(0, \frac{1}{1+s(\Phi)}\bigg] \cap (0,\beta_{g})           \quad  &&  \text{if} \quad i(\Phi) \geq 0    ,      \\
          &   (0,\alpha_{0}) \cap \bigg(0, \frac{1}{1+s(\Phi)-i(\Phi)}\bigg] \cap (0,\beta_{g}) \quad  &&  \text{if} \quad  -1 < i(\Phi) <0.       \\
     \end{alignedat}
     \right.
\end{equation*}
Then $ u \in C^{1,\alpha}(\overline{\Omega}) $. More precisely,

$ (i) $ if $ 0< m < 1+ i(\Phi) $, then
\begin{equation*}
  ||u||_{C^{1,\alpha}(\overline{\Omega})} \leq \mathrm{C} \bigg( 1+ ||u||_{L^{\infty}(\Omega)}  +  ||g||_{C^{1,\beta_{g}}(\partial \Omega)}  + \bigg(\frac{\mathcal{M}_{2}}{\nu_{0}} \bigg)^{\frac{1}{1+i(\Phi)-m}} +  \bigg(\frac{||f||_{L^{\infty}(\Omega)}+\mathcal{M}_{1}}{\nu_{0}}\bigg)^{\frac{1}{1+i(\Phi)}} \bigg)
\end{equation*}
where the constant $ \mathrm{C} $ depends on $ n, \lambda, \Lambda, i(\Phi), L, \alpha, m, \theta $ and $ C_{F} $.

$ (ii) $ if $ m = 1+ i(\Phi) $, then
\begin{equation*}
  ||u||_{C^{1,\alpha}(\overline{\Omega})} \leq \mathrm{C} \bigg( 1+ ||u||_{L^{\infty}(\Omega)} +  ||g||_{C^{1,\beta_{g}}(\partial \Omega)}   \bigg).
\end{equation*}
where the constant $ \mathrm{C} $ depends on $ \nu_{0}, ||f||_{L^{\infty}(\Omega)}, \mathcal{M}_{1} $ and $ \mathcal{M}_{2} $.
\end{theorem}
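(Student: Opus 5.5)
We prove Theorem~\ref{Thm1} by a compactness argument near the boundary, patterned on \cite{BD14, BBLL24b}, and then patch the boundary estimate with the interior one of \cite{HJMZ26a}.

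\textbf{Step 1: normalization.} We flatten $\partial\Omega$ near $0$ using the $C^2$ chart $\varphi$ from \hyperref[A6]{\bf (A6)}. We subtract a $C^{1,\beta_g}$ extension of $g$, so that the boundary datum becomes $0$ modulo a $C^{1,\beta_g}$ error. Then we rescale $v(x)=u(rx)/K$, with
\[
K = 1+\|u\|_{L^\infty}+\|g\|_{C^{1,\beta_g}}+\Big(\tfrac{\mathcal M_2}{\nu_0}\Big)^{\frac{1}{1+i(\Phi)-m}}+\Big(\tfrac{\|f\|_\infty+\mathcal M_1}{\nu_0}\Big)^{\frac1{1+i(\Phi)}} .
\]
Using the almost-monotonicity in \hyperref[A3]{\bf (A3)}, the rescaled $v$ solves an equation of the same class, with $\widetilde\Phi(t,x)=\Phi(Kt/r,rx)/\Phi(K/r,rx)$. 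Its source and Hamiltonian terms are then smaller than a prescribed $\varepsilon$. The two exponents in $K$ come exactly from balancing $\mathcal M_2 t^m$ against $\nu_0 t^{1+i(\Phi)}$ and $f$ against $\nu_0 t^{1+i(\Phi)}$. In case (ii), $m=1+i(\Phi)$, this balancing is impossible by scaling $K$ alone. Instead we shrink $r$, which makes the constant depend on $\nu_0,\|f\|_\infty,\mathcal M_1,\mathcal M_2$.

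\textbf{Step 2: approximation lemma and geometric iteration.} We show the following. For every $\delta>0$ there is $\varepsilon>0$ such that if $\|v\|_\infty\le1$, $\operatorname{osc}F\le\varepsilon$, the boundary datum is $\varepsilon$-small in $C^{1,\beta_g}$, and $|f|+|H|\le\varepsilon$, then $v$ is $\delta$-close on $B_{1/2}\cap\Omega$ to a solution $h$ of $F_0(D^2h)=0$ with $h=0$ on the flat boundary. The proof is by contradiction. The uniform boundary H\"older estimates (barrier arguments, valid for degenerate equations with small data, cf.\ \cite{BD14}) give compactness. The limit solves $F_0(D^2h)=0$; this holds after the Imbert--Silvestre cutting lemma, which handles the degeneracy/singularity by testing with $\varphi+\langle q,x\rangle$. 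By Krylov--Safonov boundary theory, $h\in C^{1,\alpha_0}$ up to the flat boundary. Iterating with tilted profiles $v-\ell_k$, where $\ell_k(x)=b_k x_n$, we get
\[
\sup_{B_{\rho^k}\cap\Omega}|v-\ell_k|\le \rho^{k(1+\alpha)} .
\]
The key point is that $v_k(x)=\rho^{-k(1+\alpha)}(v-\ell_k)(\rho^k x)$ again satisfies the smallness hypotheses. The gradient shift by $b_k$ requires the lemma to be uniform in an added drift $q$ inside $\Phi(|Dv+q|)$. Rescaling also multiplies the right side by $\rho^{k(1-\alpha(1+s(\Phi)))}$, or by $\rho^{k(1-\alpha(1+s-i))}$ in the singular case; this is where $\alpha\le 1/(1+s(\Phi))$, respectively $\alpha\le 1/(1+s(\Phi)-i(\Phi))$, enters. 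Finally, $\alpha<\alpha_0$ and $\alpha<\beta_g$ are needed to absorb the errors of $h$ and of $g$.

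\textbf{Step 3: main obstacle.} The hardest point is the case distinction $|b_k|$ small versus large, as in \cite{IS13, ART15}. When $|b_k|\gg\rho^{k\alpha}$, the equation for $v_k$ becomes uniformly elliptic with a bounded right side. Here $\Phi(|Dv_k+\rho^{-k\alpha}b_k|)$ is comparable to a constant by \hyperref[A3]{\bf (A3)}. We then stop the iteration and apply the standard $C^{1,\alpha}$ boundary theory for uniformly elliptic equations, with the $H$ term treated as a bounded perturbation of sublinear growth. In the singular regime $i(\Phi)<0$, $\Phi$ blows up near zero gradient, and the degeneracy estimates must be replaced by the singular-case lemma. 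I would try to handle this uniformly by dividing the equation by $\Phi$ in the region $|Dv|\ge$ const, and otherwise using the smallness of $\Phi^{-1}\ge t^{-i(\Phi)}$.

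\textbf{Step 4: global estimate.} The boundary pointwise $C^{1,\alpha}$ estimate holds at each boundary point with uniform constants. We combine it with the interior estimate of \cite{HJMZ26a} by the standard distance-to-boundary argument, which gives $u\in C^{1,\alpha}(\overline\Omega)$ with the stated bounds.
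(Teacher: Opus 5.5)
\textbf{Verdict: genuine gap (singular case; large-drift step unjustified).} Your treatment of the degenerate case $i(\Phi)\ge 0$ follows the paper's route. That means the normalization of Remark~\ref{Section3:rmk1}, an approximation lemma proved by contradiction via the cutting lemma and stability, the iteration with drift $\xi_k=b_k\rho^{-k\alpha}$, and patching with the interior estimate.

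The genuine gap is the singular regime $-1<i(\Phi)<0$. You leave it as ``dividing by $\Phi$ where $|Dv|\ge$ const, and otherwise using the smallness of $\Phi^{-1}\ge t^{-i(\Phi)}$''. That cannot work as stated:
\begin{itemize}
\item A lower bound on $\Phi^{-1}$ gives no smallness. Near zero gradient, \textbf{(A3)} only gives $\Phi(t)^{-1}\lesssim t^{-s(\Phi)}$, which is unbounded when $s(\Phi)>0$.
\item You supply no equicontinuity uniform in the drift for singular $\Phi$. So no approximation lemma is available there.
\end{itemize}
The missing idea is to first prove a boundary Lipschitz bound, as in Proposition~\ref{Section3:Prop4}. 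There one uses Ishii--Lions with the concave modulus $\omega(s)=s-\omega_0 s^{1+\beta}$ plus the barrier of Lemma~\ref{Section3:lemma2}. At the doubling point $|\xi_{\hat x}|,|\xi_{\hat y}|\ge 1$, so $|f|/\Phi\lesssim L_1^{-i(\Phi)}$, which is sublinear in $L_1$ because $-i(\Phi)<1$. Then multiply the equation by $|Du|^{-i(\Phi)}$. The new degeneracy $\widetilde\Phi(t)=t^{-i(\Phi)}\Phi(t)$ satisfies \textbf{(A3)} with $i(\widetilde\Phi)=0$ and $s(\widetilde\Phi)=s(\Phi)-i(\Phi)$. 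The new right-hand side $|Du|^{-i(\Phi)}f$ is bounded thanks to the Lipschitz bound, so the degenerate theory applies. This reduction is exactly what produces $\alpha\le 1/(1+s(\Phi)-i(\Phi))$; you assert that exponent without an argument that delivers it.

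A second, smaller gap is in your large-drift step.
\begin{itemize}
\item Saying $\Phi(|Dv_k+\xi_k|)$ is ``comparable to a constant by \textbf{(A3)}'' presupposes $|Dv_k|\ll|\xi_k|$. That is, you need a Lipschitz bound on $v_k$ uniform in $k$ and in the drift. This does not follow from $\|v_k\|_\infty\le 1$. The paper proves it separately in Proposition~\ref{Section3:Thm5}, using the drift-adapted barrier of Lemma~\ref{Section3:lemma6}.
\item Boundary barriers only control the modulus at $\partial\Omega$. Interior equicontinuity uniform in the drift needs Ishii--Lions estimates, as in Propositions~\ref{Section3:Prop3} and~\ref{Section3:Thm7}.
\item Your hypothesis ``$|f|+|H|\le\varepsilon$'' ignores that $H(|Dv+\xi|)/\Phi(|Dv+\xi|)\sim\mathcal M_2|\xi|^{m-i(\Phi)}$ for large drift. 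The right smallness condition is $\mathcal M_2\bigl(1+|\xi|^{(m-i(\Phi))_+}\bigr)\le\varepsilon$. You must check it survives the iteration. The paper does this via $\mathcal M_{2;k}|\xi_k|^{m-i(\Phi)}\le L\epsilon_0\rho^{k[1-\alpha(1+s(\Phi)-i(\Phi))]}|b_k|^{m-i(\Phi)}$ together with $|b_k|\le 2\mathrm C$.
\item You cannot subtract a mere $C^{1,\beta_g}$ extension of $g$, since its Hessian would enter $F$. Subtract only an affine function, or keep $g$ as the boundary datum of the limit problem, as the paper does.
\end{itemize}
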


Next, if the operator $ F $ is convex or concave and $ g \in C^{1,1}(\partial \Omega) $, by the classical Evans-Krylov theory \cite{Eva82, K82, K83}, one obtain
\begin{corollary}\label{Section1:coro1}
Suppose the assumptions of Theorem \ref{Thm1} are in force. Assume further the operator $ F $ is convex/concave and $ g \in C^{1,1}(\partial \Omega) $. Then $ u \in C^{1,\alpha}(\overline{\Omega})$, where
\begin{equation*}
\alpha = \left\{
     \begin{alignedat}{2}
         & \frac{1}{1+s(\Phi)}            \quad  &&  \text{if} \quad i(\Phi) \geq 0    ,         \\
          &   \frac{1}{1+s(\Phi)-i(\Phi)}  \quad  &&  \text{if} \quad  -1 < i(\Phi) <0.         \\
     \end{alignedat}
     \right.
\end{equation*}
\end{corollary}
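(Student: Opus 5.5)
The plan is to rerun the proof of Theorem~\ref{Thm1}. The only restriction on $\alpha$ that is not structural is $\alpha<\alpha_0$, and under convexity or concavity it can be removed. By the Evans--Krylov theory \cite{Eva82, K82, K83}, viscosity solutions of $F(D^2 h,x_0)=0$ with frozen coefficients are $C^{2,\bar\alpha}$ in the interior. The same holds up to a flat boundary (Krylov's boundary estimate) when the boundary data is $C^{1,1}$, or at least $C^{2,\bar\alpha}$ after flattening. In particular, for every $r\in(0,1/2)$ such an $h$ satisfies the $C^{1,1}$-type approximation estimate
\[
\sup_{B_r^+}|h(x)-h(0)-Dh(0)\cdot x|\le C r^{2}\|h\|_{L^\infty(B_1^+)}.
\]
Hence in the geometric iteration we may replace the Krylov--Safonov exponent $\alpha_0$ by any exponent strictly below $1$. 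Likewise, $g\in C^{1,1}(\partial\Omega)$ lets us take $\beta_g=1$, so the constraint $\alpha<\beta_g$ becomes vacuous whenever $\alpha<1$.

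First, use Theorem~\ref{Thm1} with some small admissible exponent to obtain a global $C^1$ bound, and normalize so that the compactness argument applies. Scaling $u_r(x)=u(rx)/r^{1+\alpha}$ makes $f$, $\mathcal M_1$, $\mathcal M_2$ and $\mathrm{osc}_F$ small, and the boundary data tends to affine. Next, redo the approximation lemma: solutions of the perturbed equation (after subtracting an affine function $\ell$) are close to solutions of $F(D^2h,x_0)=0$ with affine boundary values on the flat part. This uses the cutting lemma that splits the large-slope and small-slope regimes of $\Phi(|Du+q|,x)$, as in \cite{IS13, BBLL24b}. Then run the iteration $\sup_{B_{\rho^k}\cap\Omega}|u-\ell_k|\le\rho^{k(1+\alpha)}$. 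The improvement step works provided $C\rho^{2}\le\frac12\rho^{1+\alpha}$, and this holds for any $\alpha<1$ once $\rho$ is chosen small.

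The remaining obstruction is the degeneracy. At points where $|D\ell_k|$ becomes small compared with $\rho^{k\alpha}$, the equation cannot be divided by $\Phi$. Here we use the alternative argument of \cite{ART15}. If $|D\ell_k|\le\rho^{k\alpha}$, the rescaled function solves an equation whose right-hand side is $O\big(\rho^{k(1-\alpha(1+s(\Phi)))}\big)$ for $i(\Phi)\ge0$, and $O\big(\rho^{k(1-\alpha(1+s(\Phi)-i(\Phi)))}\big)$ in the singular case, using the almost-monotonicity in \hyperref[A3]{\bf (A3)}. This stays bounded exactly when $\alpha\le 1/(1+s(\Phi))$, respectively $\alpha\le 1/(1+s(\Phi)-i(\Phi))$. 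Thus the iteration closes at the endpoint value of $\alpha$ in the statement. Otherwise the gradient is nondegenerate, the equation is uniformly elliptic with $C^{2,\bar\alpha}$ estimates from Evans--Krylov, and one obtains the $C^{1,\alpha}$ estimate on the corresponding ball by the usual patching of interior and boundary cases.

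The main obstacle is achieving the endpoint exponent itself. With the strict inequality the argument is routine, but reaching equality requires checking that the degeneracy bound and the approximation error stay uniform at the critical scale. That uniformity is exactly the extra room given by the quadratic ($r^2$) approximation rate from Evans--Krylov. The Hamiltonian term is harmless: after scaling it contributes $\mathcal M_2 r^{1-\alpha}|Du_r|^m$ in the regime $m\le 1+i(\Phi)$, and this is small.
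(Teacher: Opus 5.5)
Your key step is the paper's whole argument. The corollary is read off from Theorem~\ref{Thm1}: for convex/concave $F$, Evans--Krylov removes the cap $\alpha<\alpha_0$, and $g\in C^{1,1}$ removes the cap $\alpha<\beta_g$, so only the structural bound from \hyperref[A3]{\bf (A3)} remains. The rest of your sketch re-describes the iteration of Section~\ref{Section 4}. You organize it by a small/large slope dichotomy in the spirit of \cite{ART15}, whereas the paper uses an approximation lemma uniform in $\xi$. Several of your auxiliary claims are overstated, though all can be repaired:
\begin{itemize}
\item The endpoint $\alpha=1/(1+s(\Phi))$ is already admissible in Theorem~\ref{Thm1}, because the factor $\rho^{k(1-\alpha(1+s(\Phi)))}$ is then exactly $1$. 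There is no extra uniformity to check; Evans--Krylov only has to supply an approximation rate $r^{1+\bar\alpha}$ with $\bar\alpha>\alpha$.
\item The quadratic rate you invoke is not available at boundary points. A $C^2$ boundary with $C^{1,1}$ data gives $C^{1,\bar\alpha}$ for every $\bar\alpha<1$, but not $C^{1,1}$. For example, $h=\frac1\pi\operatorname{Im}(z^2\log z)$ is harmonic in $\{x_2>0\}$ with $C^{1,1}$ boundary values ($x_1^2$ for $x_1<0$, $0$ for $x_1>0$), yet $\sup_{B_r\cap\{x_2>0\}}|h|\sim r^2|\log r|$.
\item In your nondegenerate regime the right-hand side $(f-H)/\Phi$ is merely bounded. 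This gives $C^{1,\bar\alpha}$ for $\bar\alpha<1$, not $C^{2,\bar\alpha}$.
\item In the singular case, the exponent $1/(1+s(\Phi)-i(\Phi))$ does not come from scaling the right-hand side, which gives $\rho^{k(1-\alpha(1+s(\Phi)))}$. It comes from the Lipschitz bound of Proposition~\ref{Section3:Prop4} combined with the substitution $\Phi\mapsto t^{-i(\Phi)}\Phi$, which is degenerate with indices $0$ and $s(\Phi)-i(\Phi)$. This reduction is needed because the normalized singular operator can lose ellipticity at large slopes.
\end{itemize}
The weaker rates in the second and third points suffice whenever $\alpha<1$.

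The one genuine gap, which the paper shares, is the case where the stated exponent equals $1$. This happens when $s(\Phi)=0$ (which forces $i(\Phi)=0$), or when $s(\Phi)=i(\Phi)$ in the singular range. Your improvement step $C\rho^{1+\bar\alpha}\le\frac12\rho^{1+\alpha}$ would then require $\bar\alpha>1$. No argument can fix this, because the conclusion is false there. Take $\Phi\equiv1$, $H\equiv0$, $F=\operatorname{tr}$: the corollary would assert $u\in C^{1,1}(\overline\Omega)$ for every solution of $\Delta u=f$ with $f$ continuous and bounded. This fails, e.g.\ for $u=x_1x_2(-\log|x|)^{1/2}$ near the origin, whose Laplacian is continuous while $u_{x_1x_2}$ is unbounded. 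So your argument proves the corollary exactly when the stated exponent is $<1$, and the statement should be restricted to that range.
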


\begin{remark}
Theorem~\ref{Thm1} applies, in particular, to several concrete degenerate models with Hamiltonian terms. Typical examples include
\[
\big[|Du|^{p}+a(x)|Du|^{q}\big]F(D^{2}u,x)
+h(x)|Du|^{m}=f(x),
\]
\[
\big[|Du|^{p}+a(x)|Du|^{p}\log(1+|Du|)\big]F(D^{2}u,x)
+h(x)|Du|^{m}=f(x),
\]
and, more generally,
\[
\big[\Phi(|Du|)+a(x)\Psi(|Du|)\big]F(D^{2}u,x)
+h(x)|Du|^{m}=f(x),
\]
where $\Phi$ and $\Psi$ are $N$-functions exhibiting Orlicz-type growth. These examples illustrate that our result covers power-type, logarithmically perturbed, and Orlicz-type degeneracy laws in the presence of Hamiltonian terms.
\end{remark}

We next revisit the boundary regularity problem from a different
perspective. Inspired by the oscillation-based approach developed
in \cite{AS23}, we ask whether this method can be adapted to equations
with variable double-phase degeneracy and Hamiltonian terms. This
would not only extend the corresponding results in
\cite{AS23, BSRR23}, but also provide an alternative approach to the
boundary regularity obtained from Theorem~\ref{Thm1} for this
particular class of equations. This motivates the following problem:

\begin{problem}\label{Section1:problem2}
Can the oscillation-based approach of \cite{AS23} be adapted to obtain
sharp boundary regularity for degenerate equations with variable
double-phase exponents and Hamiltonian terms?
\end{problem}

More precisely, we consider the concrete model
\begin{equation}\label{Section1:eq5}
\left\{
     \begin{alignedat}{2}
          \big[|Du|^{p(x)}+ a(x)|Du|^{q(x)}\big]F(D^{2}u,x)+ h(x)|Du|^{m} & = f(x)         \quad   &&   \text{in} \ \ \Omega     ,    \\
          u(x)   &  = g(x)   \quad  &&   \text{on} \ \   \Omega,      \\
     \end{alignedat}
     \right.
\end{equation}
with the following assumptions hold:

\label{A3a} {\bf (A3a).} $ 0 < p(x) \leq q(x) $, $ p(x), q(x) \in C^{0}(\Omega) $ and $ p_{\mathrm{min}}:=\min_{x \in \overline{\Omega}} p(x)     $, $ p_{\mathrm{max}}:=\max_{x \in \overline{\Omega}} p(x)     $;

\label{A4a} {\bf (A4a).} $ h(x) \in C^{0}(\Omega) $, and $ 0 < m \leq 1+  p_{\mathrm{min}} $;

\label{A5a} {\bf (A5a).} assume that for some $   \mathcal{K}_{2} > 0 $ and $ \theta \in (0,1) $,  it holds
$ |f(x)| \leq \mathcal{K}_{2}|x|^{\theta} $.
Moreover, $ g \in C^{1, \beta_{g}}(\partial \Omega) $ for some $ 0 < \beta_{g} < 1 $.

Before providing the boundary regularity for \eqref{Main:eq1}, we denote $ \Omega_{r}^{+}(x):= \Omega \cap B_{r}(x) $, $ \Omega'_{r}(x):= \partial \Omega \cap B_{r}(x) $. In particular, $ \Omega_{r}^{+}:= \Omega \cap B_{r} $ and $ \Omega'_{r}:= \partial \Omega \cap B_{r} $.

\begin{theorem}[{Boundary $ C^{1,\beta'}$ regularity}]
\label{Thm2}
Suppose that the assumptions \hyperref[A1]{\bf (A1)}, \hyperref[A2]{\bf (A2)}, \hyperref[A3a]{\bf (A3a)}, \hyperref[A4a]{\bf (A4a)} and \hyperref[A5a]{\bf (A5a)} hold. Let $ u $ be a viscosity solution to \eqref{Section1:eq5}. Then for
\begin{equation}\label{Section1:eq6}
  \beta'=\min \bigg\{ \beta_{g}, \frac{1+\theta}{1+p_{\mathrm{max}}}, \frac{1}{1+p_{\mathrm{max}}-m} \bigg\},
\end{equation}
and for some constant $ \mathrm{C} $, it holds
\begin{equation}\label{}
  |u(x)-u(x_{0}) - Du(x_{0})\cdot (x-x_{0})| \leq \mathrm{C} A'|x-x_{0}|^{1+\beta'}
\end{equation}
for $ x \in  \Omega_{1}^{+} $ and $ x_{0} \in \Omega_{1}^{'} $, where
\begin{equation*}
  A':= 1+ ||u||_{L^{\infty}(\Omega_{1}^{+})} + ||g||_{C^{1, \beta_{g}}(\Omega_{1}^{'})} + ||h||_{L^{\infty}(\Omega_{1}^{+})}^{\frac{1}{1+p_{\mathrm{min}}-m}} +  \mathcal{K}_{2}^{\frac{1}{1+p_{\mathrm{min}}}}.
\end{equation*}
\end{theorem}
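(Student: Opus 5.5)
We will follow the oscillation (improvement of flatness) scheme of \cite{AS23}, adapted to the variable double-phase degeneracy and the Hamiltonian term. First we normalize. Subtracting an affine function and flattening the boundary near $x_0$ (translate so $x_0=0$), we reduce to $g(0)=0$ and $Dg(0)=0$ in tangential directions. We then rescale $v(x)=u(x)/A'$. The new equation has the same structure, with coefficient $h/A'^{\,1+p_{\min}-m}$ up to powers coming from the factor $A'^{p(x)}$. Because of the normalisation in $A'$, the new data satisfy $\|v\|_\infty\le 1$, $\|g\|_{C^{1,\beta_g}}\le 1$, $\|h\|_\infty\le 1$ and $\mathcal K_2\le 1$. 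Together with the degeneracy this makes the right-hand side small after one further dilation.

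\emph{Step 1 (compactness).} We prove an approximation lemma. Given $\varepsilon>0$ there is $\delta>0$ such that the following holds. Suppose $v$ solves the normalized equation in $\Omega^+_1$ with $\|f\|_\infty+\|h\|_\infty+\mathrm{osc}_F+\|g\|_{C^{1,\beta_g}}+\|D\varphi\|\le\delta$. Then there is a solution $w$ of a constant-coefficient equation $F_\infty(D^2w)=0$ in $\Omega_{1/2}^+$, with $w=0$ on the flat boundary, such that $|v-w|\le\varepsilon$. The argument is by contradiction. We use the global $C^{0,\gamma}$ / Lipschitz bounds from the proof of Theorem~\ref{Thm1}, which give equicontinuity up to the boundary. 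The limit solves $[|Dw|^{p_\infty(x)}+a_\infty|Dw|^{q_\infty(x)}]F_\infty(D^2w)=0$, and the Imbert--Silvestre cutting lemma (as used in \cite{IS13,BBLL24b}) removes the degenerate factor. Boundary $C^{1,\alpha_0}$ estimates for $F_\infty$ then give $|w(x)-Dw(0)\cdot x|\le C|x|^{1+\alpha_0}$.

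\emph{Step 2 (iteration).} Choose $\rho$ with $C\rho^{1+\alpha_0}\le\frac12\rho^{1+\beta'}$ (possible since $\beta'<\alpha_0$ after shrinking if needed), and take $\varepsilon=\frac12\rho^{1+\beta'}$. We build linear functions $\ell_k=b_k\, x_n$ (normal direction only, since $v=0$ on the flat boundary up to $C^{1,\beta_g}$ error) with
\[
\sup_{\Omega^+_{\rho^k}}|v-\ell_k|\le\rho^{k(1+\beta')},\qquad |b_{k+1}-b_k|\le C\rho^{k\beta'} .
\]
The inductive step applies Step 1 to $v_k(x)=\rho^{-k(1+\beta')}(v-\ell_k)(\rho^k x)$. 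The key computation is that $v_k$ solves an equation of the same type with gradient $Dv_k+\rho^{-k\beta'}b_k$ inside $\Phi$, and with new data
\[
\tilde f_k=\rho^{k(1-\beta'(1+p(\cdot)))}f(\rho^kx)\cdot\rho^{-k\beta'(\dots)},\qquad \tilde h_k=\rho^{k(1-\beta'(1+p-m))}h .
\]
By $|f(x)|\le\mathcal K_2|x|^\theta$ the source contributes $\rho^{k(1+\theta-\beta'(1+p_{\max}))}\le 1$. The Hamiltonian contributes a power nonpositive exactly when $\beta'\le 1/(1+p_{\max}-m)$. The boundary datum contributes $\rho^{k(\beta_g-\beta')}$. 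This is precisely why $\beta'$ is the minimum in \eqref{Section1:eq6}.

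\emph{Main obstacle.} The hardest point is that the shifted degeneracy $|Dv_k+\xi_k|^{p(x)}+a|Dv_k+\xi_k|^{q(x)}$, with $\xi_k=\rho^{-k\beta'}b_k$ possibly large, must be handled uniformly in Step 1. We will treat this by splitting into two cases. If $|\xi_k|$ is large, the operator is uniformly elliptic on the relevant gradients, so the classical small-perturbation argument applies. If $|\xi_k|$ is bounded, the cutting lemma applies, following \cite{AS23,BSRR23}. The variable exponents then only enter through the bounds $p_{\min}\le p\le p_{\max}$.

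Summing the geometric series gives a limit $b_\infty=\partial_n v(0)$ with $|v(x)-b_\infty x_n|\le C|x|^{1+\beta'}$. Undoing the normalization yields the estimate with constant $\mathrm C A'$.
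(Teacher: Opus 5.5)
Your Step~2 has a genuine gap. You choose $\rho$ with $C\rho^{1+\alpha_0}\le\frac12\rho^{1+\beta'}$ and say this is possible ``since $\beta'<\alpha_0$ after shrinking if needed''. Shrinking is not allowed. $\beta'$ is fixed by \eqref{Section1:eq6}, which does not involve $\alpha_0$. It can be close to $1$ (e.g.\ $\beta_g$ close to $1$, $p_{\max}\le\theta$ and $m\ge p_{\max}$, so that $\beta'=\beta_g$), while the Krylov--Safonov exponent $\alpha_0(n,\lambda,\Lambda)$ may be tiny.

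As written, your iteration proves the boundary expansion only with exponent $\min\{\beta',\alpha_0^-\}$. That is the exponent the paper claims only for the \emph{global} estimate, in the remark after Theorem~\ref{Thm2}, not the stated boundary one. The missing ingredient is that limiting profiles are more regular \emph{at the boundary} than interior theory gives. A solution of $F_\infty(D^2w)=0$ vanishing on a flat boundary portion satisfies $|w(x)-\partial_nw(0)\,x_n|\le C|x|^{1+\bar\alpha}$ for exponents $\bar\alpha$ arbitrarily close to $1$ (cf.\ \cite[Theorem 2.1]{AS23}). This is exactly what the paper uses in Lemma~\ref{Section5:Lem1}, where $Du_\infty(0)=0$ yields $\sup_{\Omega^+_\rho}|u_\infty|\le C_0\rho^2\le\frac12\rho^{1+\beta'}$. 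With this in your Step~1, you choose $\rho$ from $C\rho^{1+\bar\alpha}\le\frac12\rho^{1+\beta'}$ with $\bar\alpha\in(\beta',1)$. This only needs $\beta'\le\beta_g<1$.

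Even with this repair, your route is not the paper's, and its hardest step is only sketched. The paper never uses a shifted gradient in Section~\ref{Section 5}; it splits according to whether $|Du(0)|\le\delta_0\rho^{\beta'}$. In the small-gradient regime it iterates $u(\rho_0^kx)/\rho_0^{k(1+\beta')}$ with no affine correction, so no $\xi_k$ appears, and $C^1$ compactness from Theorem~\ref{Thm1} forces $Du_\infty(0)=0$. Otherwise it rescales at $\iota=(|Du(0)|/\delta_0)^{1/\beta'}$, where $|Du_\iota|\ge\delta_0/2$ on a neighbourhood of fixed size, so the equation is uniformly elliptic with bounded right-hand side.

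Your flatness-improvement scheme instead needs an approximation lemma uniform in $\xi_k=\rho^{-k\beta'}b_k$. Your assertion that for large $|\xi_k|$ ``the operator is uniformly elliptic on the relevant gradients'' is not available a priori, since nothing gives $|Dv_k|\ll|\xi_k|$. You need uniform-in-$\xi$ Ishii--Lions estimates up to the boundary, as in Propositions~\ref{Section3:Prop3}--\ref{Section3:Thm7}. There the Hamiltonian smallness must be verified pointwise, using $\tilde h_k(x)|\xi_k|^{m-p_k(x)}=h(\rho^kx)\,\rho^{k(1-\beta')}|b_k|^{m-p_k(x)}$ with $p_k(x)=p(\rho^kx)$. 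If you instead import the Section~\ref{Section 3} hypothesis with $i(\Phi)=p_{\min}$, the condition $\tilde h_k(1+|\xi_k|^{(m-p_{\min})_+})\le\kappa$ forces $\beta'\le 1/(1+p_{\max}-p_{\min})$ when $m>p_{\min}$. That restriction is absent from \eqref{Section1:eq6}.
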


\begin{remark}
In fact, arguing as in \cite[Section 4]{AS23}, we can also obtain the sharp global $ C^{1, \beta} $ regularity of viscosity solution to \eqref{Section1:eq5} for $ \beta=\min \big\{\beta', \alpha_{0}^{-}    \big\} $.
\end{remark}

Hereafter, we denote the {\it singular region} of the existing solutions by
\begin{equation*}
  \mathcal{S}_{r,\gamma'}(u, \Omega'):= \{x_{0} \in \Omega' \Subset \Omega: |Du(x_{0})| \leq  r^{\gamma'}, \text{for} \quad  0 \leq r \ll 1  \quad \text{and} \quad  \gamma' \in (0,1)   \}.
\end{equation*}

A direct application of Theorem~\ref{Thm2} shows that if $ x_{0} \in  \mathcal{S}_{r, \beta'}(u, \Omega_{1}^{+}) $, then near $ x_{0} $ there holds
\begin{equation*}
  \sup_{x \in B_{r}(x_{0})} |u(x)| \leq |u(x_{0})|  +  \mathrm{C} r^{1+\beta'}.
\end{equation*}

Nevertheless, from a geometric viewpoint, it is qualitatively essential for deriving the sharp lower bound estimate--the counterpart to the upper bound--for such PDEs with nonhomogeneous degeneracy. This is precisely the {\it non-degeneracy property} of solutions. Thus, a natural question arises:
\begin{problem}\label{Section1:problem3}
Under the assumption $\inf_{x\in\Omega} f(x)>0$, can we establish a quantitative non-degeneracy estimate for solutions to~\eqref{Section1:eq5} in the singular region?
\end{problem}

The following theorem provides an affirmative answer to
problem~\ref{Section1:problem3}.

\begin{theorem}[{Non-degeneracy}]
\label{Thm3}
Suppose that the assumptions \hyperref[A1]{\bf (A1)}, \hyperref[A2]{\bf (A2)}, \hyperref[A3a]{\bf (A3a)}, \hyperref[A4a]{\bf (A4a)} and \hyperref[A5a]{\bf (A5a)} hold. Let $ u $ be a viscosity solution to
\begin{equation}\label{Section1:introthm3}
   [|Du|^{p(x)}+ a(x)|Du|^{q(x)}] F(D^{2}u,x)+ h(x)|Du|^{m} = f(x)        \quad    \text{in} \quad   \Omega
\end{equation}
with $ \inf_{x \in \Omega} f(x) > 0 $.

Given $ x_{0} \in \mathcal{S}_{r,\gamma'}(u, \Omega') $, there exists a constant $ \mathrm{c}=\mathrm{c}(m, n, \lambda, \Lambda, p_{\mathrm{min}}, q_{\mathrm{min}}, ||a||_{L^{\infty}(\Omega)}, ||h||_{L^{\infty}(\Omega)})$ such that
\begin{equation*}
  u(x_{0}) + \mathrm{c}r^{\frac{2+\theta+p_{\mathrm{min}}}{1+p_{\mathrm{min}}}}  \leq \sup_{\partial B_{r}(x_{0})} u(x)  \quad  \text{for all} \quad   r \in (0, 2^{-1}),
\end{equation*}
provided
\begin{equation*}
  \frac{2+\theta+p_{\mathrm{min}}}{1+p_{\mathrm{min}}} \leq \frac{2+p_{\mathrm{min}}-m}{1+p_{\mathrm{max}}-m}
\end{equation*}
holds.
\end{theorem}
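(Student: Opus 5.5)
The plan is the standard comparison/barrier argument for non-degeneracy, as in the proofs for $|Du|^{p}F(D^2u)=f$ and its double phase variants. Fix $x_{0}\in\mathcal{S}_{r,\gamma'}(u,\Omega')$ and $r\in(0,2^{-1})$, and set $\kappa:=\frac{2+\theta+p_{\mathrm{min}}}{1+p_{\mathrm{min}}}>1$. By approximation (or by working near points $x_{0}$ and then passing to the limit using continuity of $u$) we may assume $u(x_{0})$ is attained at $x_{0}$ itself. We compare $u$ with the radial barrier
\[
\Theta(x):=u(x_{0})+\mathrm{c}\,|x-x_{0}|^{\kappa},
\]
where $\mathrm{c}>0$ is a small constant to be fixed. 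For $x\neq x_{0}$ we have
\[
|D\Theta|=\mathrm{c}\kappa|x-x_{0}|^{\kappa-1},
\]
and $D^{2}\Theta$ has eigenvalues $\mathrm{c}\kappa(\kappa-1)|x-x_{0}|^{\kappa-2}$ (radial, simple) and $\mathrm{c}\kappa|x-x_{0}|^{\kappa-2}$ (multiplicity $n-1$). Hence, by \hyperref[A1]{\bf (A1)},
\[
F(D^{2}\Theta,x)\le \mathscr{P}^{+}(D^{2}\Theta)\le \mathrm{c}\,\Lambda\kappa(n-1+\kappa-1)|x-x_{0}|^{\kappa-2}.
\]

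The key step is to show that $\Theta$ is a strict viscosity subsolution-type comparison function, i.e.
\[
[|D\Theta|^{p(x)}+a(x)|D\Theta|^{q(x)}]F(D^{2}\Theta,x)+h(x)|D\Theta|^{m}<\inf_{\Omega} f
\]
in $B_{r}(x_{0})\setminus\{x_{0}\}$, for $\mathrm{c}$ small. The left side is bounded by a sum of terms $\mathrm{c}^{1+p(x)}\rho^{(\kappa-1)p(x)+\kappa-2}$, $\|a\|_{\infty}\mathrm{c}^{1+q(x)}\rho^{(\kappa-1)q(x)+\kappa-2}$ and $\|h\|_{\infty}\mathrm{c}^{m}\rho^{(\kappa-1)m}$, with $\rho=|x-x_{0}|<1$. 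Since $|D\Theta|\le \mathrm{c}\kappa\le1$ for small $\mathrm{c}$, and $\rho<1$, each power is bounded by its value at the smallest exponent, $p_{\mathrm{min}}$ (respectively $q_{\mathrm{min}}$). The exponent of $\rho$ in the first term is $(\kappa-1)(1+p_{\mathrm{min}})-1=\theta\ge0$, and the other exponents are nonnegative as well. So the left side is at most $C(n,\Lambda,\ldots)(\mathrm{c}^{1+p_{\mathrm{min}}}+\mathrm{c}^{1+q_{\mathrm{min}}}+\mathrm{c}^{m})$. This can be made smaller than $\inf f$, which fixes $\mathrm{c}$ with the stated dependencies (plus $\inf f$).

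Now argue by contradiction. Suppose $\sup_{\partial B_{r}(x_{0})}u<u(x_{0})+\mathrm{c}r^{\kappa}$. Then $u<\Theta$ on $\partial B_{r}(x_{0})$, while $u(x_{0})=\Theta(x_{0})$. Hence $u-\Theta$ attains a nonnegative maximum at some interior $\bar x\in B_{r}(x_{0})$, so $\Theta+\text{const}$ touches $u$ from above at $\bar x$. If $\bar x\neq x_{0}$, then $\Theta$ is smooth there with $D\Theta(\bar x)\neq0$. The viscosity subsolution-type inequality for $u$, tested with $\Theta$, gives $\ge f(\bar x)\ge\inf f$, contradicting the strict inequality above. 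The delicate case is $\bar x=x_{0}$, where $D\Theta=0$ (and $\Theta$ is only $C^{1,\kappa-1}$ if $\kappa<2$). In this case the degenerate operator evaluated on the test function vanishes, giving $0\ge f(x_{0})>0$, again a contradiction. If the definition of viscosity solution used by the paper excludes such test functions, we instead perturb $\Theta$ by a tiny linear term $\varepsilon\, e\cdot(x-x_{0})$ to move the contact point off $x_{0}$, and pass $\varepsilon\to0$.

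I expect the main obstacle to be reconciling the barrier with the singular region and with the provisional condition $\kappa\le\frac{2+p_{\mathrm{min}}-m}{1+p_{\mathrm{max}}-m}$. The Hamiltonian term must not dominate, and the variable exponents $p(x)$ must be handled uniformly. Concretely, when bounding $|D\Theta|^{p(x)}$, one must ensure the relevant gradient scale stays below $1$ and consistent with $|Du(x_{0})|\le r^{\gamma'}$. I would use the stated exponent inequality exactly to compare $\rho^{(\kappa-1)m}$ against the scaling of the principal part at exponent $p_{\mathrm{max}}$. If needed, I would also use Theorem~\ref{Thm2} to control $u$ near $x_{0}$ in the singular region.
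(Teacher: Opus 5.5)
\paragraph{The gap: contact at the vertex.} Your barrier, the Pucci computation and the choice of a small $c$ are correct. They coincide with the paper's argument: the paper rescales to $u_{r,x_0}=(u(x_0+r\,\cdot)-u(x_0)+\epsilon)/r^{\kappa}$ on $B_1$ and uses $\mathrm{C}_0|x|^{\kappa}$, which is your $\Theta$ in the original variables. The gap is in the last step, when $u-\Theta$ is maximized at the vertex $x_0$.
\begin{itemize}
\item When $\kappa<2$, i.e.\ whenever $p_{\mathrm{min}}>\theta$, no $C^2$ function touches $\Theta$ from above at $x_0$. Such a function would have zero gradient there, and $c|x-x_0|^{\kappa}\le M|x-x_0|^2$ fails near $x_0$.
\item So you cannot conclude ``$0\ge f(x_0)$'', whether or not the definition allows zero-gradient tests.
\item Your linear perturbation does not repair this. $\Theta_\varepsilon=\Theta+\varepsilon e\cdot(x-x_0)$ is still not $C^2$ at $x_0$.
\item Worse, $\Theta_\varepsilon$ is no longer a strict supersolution near $x_0$. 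Where $c\kappa|x-x_0|^{\kappa-1}\le\varepsilon/2$ you have $|D\Theta_\varepsilon|\ge\varepsilon/2$, while $F(D^2\Theta,x)\ge\lambda c\kappa(n+\kappa-2)|x-x_0|^{\kappa-2}\to\infty$.
\item Hence the new contact point can land in that punctured neighbourhood with no contradiction. Letting $\varepsilon\to0$ gives nothing, because $D^2\Theta$ is unbounded there.
\end{itemize}

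\paragraph{Two ways to close it.}
\begin{enumerate}
\item[(i)] Do as the paper does: treat $\Theta$ as a viscosity supersolution and apply the comparison principle (Proposition~\ref{Section8:prop1}). For a supersolution the vertex is harmless. It is a convex kink, so any $C^2$ function touching $\Theta$ from below at $x_0$ has zero gradient, and there the operator equals $0<f$. The paper adds $\epsilon$ to $u$ so that $u$ strictly exceeds the barrier at the centre.
\item[(ii)] Keep your touching argument but smooth the barrier: $\Theta_\delta=u(x_0)+c\big[(|x-x_0|^2+\delta^2)^{\kappa/2}-\delta^{\kappa}\big]$.
\begin{itemize}
\item $\Theta_\delta$ is $C^\infty$, with $D^2\Theta_\delta\ge0$ and $|D\Theta_\delta|\le c\kappa(\rho^2+\delta^2)^{(\kappa-1)/2}$.
\item It satisfies $\mathscr{P}^{+}_{\lambda,\Lambda}(D^2\Theta_\delta)\le\Lambda c\kappa\,(n-1+\max\{1,\kappa-1\})(\rho^2+\delta^2)^{(\kappa-2)/2}$.
\item So your three bounds hold uniformly in $\delta$, with $\rho^{\theta}$ replaced by $(\rho^2+\delta^2)^{\theta/2}$.
\item Since $\Theta_\delta\to\Theta$ uniformly on $\partial B_r(x_0)$, the strict boundary inequality persists for small $\delta$.
\item The only zero-gradient point is $x_0$. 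It is handled by the zero-gradient test, which is legitimate for the standard definition because the operator is continuous at $Du=0$ ($p,q,m>0$).
\end{itemize}
\end{enumerate}

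\paragraph{The anticipated obstacle.} The obstacle you expect is not one, and your unscaled route is slightly better than the paper's. Your bound $\Lambda(n+\kappa-2)\big[(c\kappa)^{1+p_{\mathrm{min}}}+\|a\|_\infty(c\kappa)^{1+q_{\mathrm{min}}}\big]+\|h\|_\infty(c\kappa)^m<\inf f$ never uses the proviso $\kappa\le\frac{2+p_{\mathrm{min}}-m}{1+p_{\mathrm{max}}-m}$. The paper needs that condition only because of how it handles the rescaled equation. It bounds $\widetilde f$ below by $\inf f$, discarding the factor $r^{-[(\kappa-1)(1+\widetilde p)-1]}\ge1$. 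It must then keep $\widetilde h=h\,r^{(\kappa-1)(m-\widetilde p)+2-\kappa}$ bounded. Likewise, you need neither $|Du(x_0)|\le r^{\gamma'}$ nor Theorem~\ref{Thm2}; the paper's proof does not use them either.
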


\begin{remark}
Such quantitative information is pivotal for addressing various analytic and geometric problems, particularly in the study of blow-up procedures and related weak geometric and free boundary analysis (see \cite{SJW25, WJ26b}). To the best of our knowledge, this work provides the first non-degeneracy result for nonlinear degenerate models with the Hamiltonian terms in the existing literature. Furthermore, Theorem~\ref{Thm3} recovers the conclusion of \cite[Theorem 1.4]{BSRR23} as a special case by setting $ h(x) = 0 $ and $ \theta = 0 $.
\end{remark}

\subsection{Ideas of proof of Theorems \ref{Thm1}--\ref{Thm3}}

We first notice that establishing the boundary $ C^{1, \alpha} $ regularity of the viscosity solution $ u$ to \eqref{Main:eq1} is equivalent to showing that the graph of $ u $ can be approximated by an affine function with an error bounded by $ \mathrm{C}r^{1+\alpha} $ on every $ {B_{r}\cap \{y_{n}>\varphi(y')\}} $. Our method builds on the flatness improvement scheme of \cite{ART15}. However, the interplay between the general degeneracy $  \Phi(|Du|, \cdot)$ and the growth of the Hamiltonian term $ H(|Du|, \cdot) $ creates a delicate balance and calls for a new analysis tailored to our setting. For this purpose, we describe this idea step by step as follows:

{\boxed{\text{Step 1-a}}}. We first characterize the boundary behavior of viscosity solutions to \eqref{Main:eq1} in terms of $ C^{2} $-distance function. To achieve this, we need a general  comparison principle (Proposition \ref{Section3:prop1}) as well as a delicate construction of auxiliary functions (Lemma~\ref{Section3:lemma2}). Based on this, we attempt to obtain the compactness of viscosity solutions to the perturbed problem
\begin{equation}
\label{Section1:pert1}
\left\{
     \begin{alignedat}{2}
         \Phi(|Du+\xi|,y) F(D^{2}u,y)+ H(|Du+\xi|, y)  & = f         \quad   &&   \text{in} \quad  B_{1} \cap \{y_{n} > \varphi(y')\}     ,    \\
          u   &  = g    \quad  &&   \text{on} \quad  B_{1} \cap \{y_{n} = \varphi(y')\} ,        \\
     \end{alignedat}
     \right.
\end{equation}
where $ \xi \in \mathbb{R}^{n} $ is an arbitrary vector. To this end, it is essential to elucidate how the competitive interplay between the degeneracy law $  \Phi(|Du+\xi|,\cdot)$ and the Hamiltonian growth $ H(|Du+\xi|, \cdot) $ modulates the elliptic structure of \eqref{Section1:pert1}. In the small-gradient regime, the equation \eqref{Section1:pert1} is primarily governed by its degenerate profile, rendering it amenable to the techniques developed in \cite{IS13}. Conversely, away from the critical set $ \mathcal{C}(u)$, the equation qualitatively transitions into a Hamilton-Jacobi type form:
\begin{equation}
\label{Section1:pert2}
  F(D^2u, y) + \frac{H(|Du+\xi|, y)}{\Phi(|Du+\xi|, y)} = \frac{f(y)}{\Phi(|Du+\xi|, y)},
\end{equation}
where
\begin{equation*}
  \frac{H(|Du+\xi|, y)}{\Phi(|Du+\xi|, y)} \lesssim |Du+\xi|^{m-i(\Phi)},   \quad    \text{and}   \quad \frac{f(y)}{\Phi(|Du+\xi|, y)}  \lesssim  |Du+\xi|^{-i(\Phi)}.
\end{equation*}

This perspective underscores the analytical complexity of controlling the Hamiltonian growth amidst gradient perturbations. To address this challenge, we implement systematic scaling and approximation frameworks inspired by \cite{AN25b}.

In the regime where $m \leq i(\Phi)$, the heuristic form \eqref{Section1:pert2} suggests that the equation maintains a predominantly elliptic character for large gradients, allowing us to provide boundary H\"{o}lder regularity of solutions to \eqref{Section1:pert1} via Crandall-Ishii-Lions Lemma (\cite[Theorem 3.2]{CIL92}). Conversely, when $i(\Phi) < m \leq 1 + i(\Phi)$, additional structural condition \eqref{Section3:eq8} controls the growth of the Hamiltonian term $ \mathcal{M}_{2}|Du+\xi|^{m-i(\Phi)} $, which in turn allows us to obtain boundary Lipschitz regularity of solutions to \eqref{Section1:pert1} for the small-gradient regime by combining Lemma~\ref{Section3:lemma6} with Ishii-Lions's method.

{\boxed{\text{Step 1-b}}}. Building on the compactness result ({\it Step 1-a}), we establish a boundary version of the approximation lemma in the degenerate setting ($i(\Phi) \geq 0$), namely, the solution to \eqref{Section1:pert1} can be approximated by the function $ h$ of homogeneous $ (\lambda, \Lambda)$-uniform elliptic problem. In the end, by means of the scaling technique and the iteration method, we obtain the boundary $ C^{1, \alpha} $ regularity of solutions to \eqref{Section1:pert1}.

{\boxed{\text{Step 1-c}}}. For the singular case ($ -1 < i(\Phi) < 0 $), the main result of Theorem \eqref{Thm1} follows from the degenerate case once we establish that the boundary Lipschitz regularity of viscosity solution $ u$ of \eqref{Main:eq1}; see Proposition~\ref{Section3:Prop4} below.

The proof of Theorem~\ref{Thm2} mainly relies on the gradient oscillation estimates for degenerate elliptic equations established in \cite{ATU17, ATU18, AS23, WYJ25, N25}. We organize our analysis into two parts as follows:

{\boxed{\text{Step 2-a}}}. Firstly, we are devoted to a small-gradient regime result: Theorem~\ref{Section5:Thm1} establishes the desired regularity whenever, at a given boundary point, the gradient is sufficiently small on the relevant scale. As in the preceding arguments, the proof hinges on tangential techniques.

{\boxed{\text{Step 2-b}}}. If the gradient $ |Du| $ at a boundary point $ x_{0}$ exhibits the behavior prescribed by Theorem~\ref{Section5:Thm1}, then the conclusion is immediate. Otherwise, the boundary point $ x_{0}$ lies at a quantifiable distance from the critical set $ \mathcal{C}(u)$, then after a rescaling tailored to the value of $ |Du| $ at the point, equation \eqref{Section1:eq5} becomes uniformly elliptic on a domain of fixed size, allowing us to invoke the optimal regularity estimates for uniformly elliptic equations established in \cite[Theorem 2.1]{AS23}.

The proof of Theorem~\ref{Thm3} depends crucially on the general comparison principle. In particular, we need to prove that the auxiliary function $ \Theta(x):= \mathrm{C}_{0} |x|^{\beta''} $, where $ \beta'':=\frac{2+\theta+p_{\mathrm{min}}}{1+p_{\mathrm{min}}} $, is a super-solution of \eqref{Section1:introthm3}; the choice of the constant $ \mathrm{C}_{0}$ is the key point here.

\subsection{Main contributions of the paper}
The primary objective of this work is to extend the non-variational elliptic regularity theory to a genuinely degenerate framework for elliptic equations with Hamiltonian terms. The main contributions of this manuscript are summarized below:

\begin{enumerate}[label=(\roman*)]

\item Theorem~\ref{Thm1} provides a unified treatment of global regularity for equations of this type, embracing the frameworks of \cite{BBLL24b, BSRR23, BD14}, and can also be considered as a boundary counterpart of \cite{AN25b, ART15, BBLL24a, BPRT20, De21, FRZ21, HJMZ26a, IS13};

\vspace{2mm}

\item Theorem~\ref{Thm2} extend regarding non-variational scenario, previous results from \cite{AS23, N25}, and to some extent, we employ an improved oscillation-estimate method rather than the approximation approach \cite{BSRR23} and the flatness improvement program \cite{BD14};

\vspace{2mm}

\item To the best of our knowledge, Theorem~\ref{Thm3} provides the first non-degeneracy result for nonlinear degenerate models with the Hamiltonian terms in the existing literature. Furthermore, Theorem~\ref{Thm3} also recovers the conclusion of \cite[Theorem 1.4]{BSRR23} as a special case by setting $ h(x) = 0 $ and $ \theta = 0 $;

\vspace{2mm}

\item As two striking applications of Corollary~\ref{Section1:coro1}, we establish global $ C^{1,\frac{1}{3}} $ regularity for infinity-Poisson and global $ C^{1, \frac{1}{p-1}} $ regularity for the $ p$-Poisson equation ($ p >2$), thereby offering a promising path toward two long-standing open problems.
\end{enumerate}

\subsection{Structure of the paper}
The remainder of this paper is organized as follows. Section \ref{Section 2} presents the definition of viscosity solutions to \eqref{Main:eq1} and several auxiliary lemmas. In Section \ref{Section 3}, we will deliver the boundary H\"{o}lder regularity ($ i(\Phi) \geq 0 $) and Lipschitz regularity ($ -1< i(\Phi) <0, \xi=0 $) of viscosity solution to perturbation equation \eqref{Section3:eq1} by using scaling technique, barrier function, comparison principle and Ishii-Lions method. Section~\ref{Section 4} is devoted to presenting the proof of Theorem~\ref{Thm1}. Sections~\ref{Section 5} and~\ref{Section 6} are concerned to prove the boundary $ C^{1, \beta'} $ (Theorem~\ref{Thm2}) regularity and non-degeneracy property (Theorem~\ref{Thm3}) of solutions to \eqref{Section1:eq5}. Finally, in Section~\ref{Section 7}, we are devoted to presenting two relevant applications of our findings; see Theorems~\ref{Section 7:thm1} and \ref{Section 7:thm2} for details.

\subsection{Notations}
We summarize below the basic notation used throughout the paper.
\begin{itemize}

\item For $ r >0 $, $ B_{r}(x) $ denotes the open ball of radius $ r $ centered at $ x $. We simply use $ B_{r} $ to denote the open ball $ B_{r}(0)$.

\item  We denote $ \Omega_{r}^{+}(x):= \Omega \cap B_{r}(x) $, $ \Omega'_{r}(x):= \partial \Omega \cap B_{r}(x) $. In particular, $ \Omega_{r}^{+}:= \Omega \cap B_{r} $ and $ \Omega'_{r}:= \partial \Omega \cap B_{r} $.

\item $ \textbf{I}\mathrm{d_{n}} $ denotes the $n\times n$ identity matrix.

\item $ \text{Sym}(n) $ denotes the space of all $ n \times n $ symmetric matrices in $ \mathbb{R}^{n} $.

\item $ \mathrm{C} $ shall denote a generic positive constant which may vary in different inequalities.

\item $ f  \lesssim  g $ denotes there exists a constant $ \mathrm{C} > 0 $ such that $  f \leq \mathrm{C}g $.
\end{itemize}

{\bf Acknowledgments.} F. Jiang has been supported by the National Natural Science Foundation of China (No. 12271093) and the Jiangsu Provincial Scientific Research Center of Applied Mathematics (Grant No. BK20233002), and Shanghai Institute for Mathematics and Interdisciplinary Sciences (SIMIS) under grant number SIMIS-ID-2025-AD.

\section{Preliminaries}
\label{Section 2}
In this section, we first review the definition of a viscosity solution to equation \eqref{Main:eq1}, and then present several auxiliary lemmas to be used in the proof of Theorems~\ref{Thm1}--\ref{Thm2}: the Alexandroff-Bakelman-Pucci (ABP) estimate, the Cutting lemma, and a stability result for problem \eqref{Main:eq1}.

\subsection{The definition of viscosity solution} In what follows, we shall focus on viscosity solutions of the equation
\begin{equation}\label{Section 2:eq00}
  G(x, Du, D^{2}u):= \Phi(|Du|,x) F(D^{2}u,x)+ H(|Du|, x) - f  \quad   \text{in}   \quad   \Omega.
\end{equation}
We now provide the following definition of a viscosity solution $ u $ of \eqref{Section 2:eq00}:

\begin{definition}\label{subsection 2.1}
A function $ u \in C^{0}(\Omega) $ is a viscosity super-solution to \eqref{Section 2:eq00} if for $ x_{0} \in \Omega $,

\begin{itemize}

\item either for any $ \varphi \in C^{2}(\Omega) $ such that $ u-\varphi $ has a local minimum at $ x_{0} $ and $ D \varphi(x_{0}) \neq 0 $, one has
\begin{equation*}
  G(x, D\varphi(x_{0}), D^{2}\varphi(x_{0}))  \leq 0;
\end{equation*}

\item or there exists a $ \delta > 0 $ such that $ v$ is constant in $ B_{\delta}(x_{0}) $ and $ 0 \leq f(x) $ for all $ x \in B_{\delta}(x_{0}) $.
\end{itemize}

Similarly, $ u \in C^{0}(\Omega) $ is called is a viscosity sub-solution of \eqref{Section 2:eq00} if for $ x_{0} \in \Omega $,

\begin{itemize}

\item either for any $ \varphi \in C^{2}(\Omega) $ such that $ u-\varphi $ has a local maximum at $ x_{0} $ and $ D \varphi(x_{0}) \neq 0 $, one has
\begin{equation*}
  G(x, D\varphi(x_{0}), D^{2}\varphi(x_{0}))  \geq 0;
\end{equation*}

\item or there exists a $ \delta > 0 $ such that $ v$ is constant in $ B_{\delta}(x_{0}) $ and $ 0 \geq f(x) $ for all $ x \in B_{\delta}(x_{0}) $.
\end{itemize}

Finally, we say that $ u $ is a viscosity solution to \eqref{Section 2:eq00} if it is both a viscosity sub-solution and a viscosity super-solution.
\end{definition}

\subsection{Auxiliary lemmas}
In the sequel, we will present an Alexandroff-Bakelman-Pucci (ABP) estimate adapted to our context of fully nonlinear degenerate models with the Hamiltonian terms, see \cite[Theorem 1.3]{BFO26}. Such an estimate is pivotal for establishing universal bounds for viscosity solutions in terms of the problem data.

\begin{theorem}[{ABP estimate}]
\label{Section 2:thm1}
Assume that the assumptions \hyperref[A1]{\bf (A1)}--\hyperref[A2]{\bf (A2)} are in force. Let $ u \in C(\overline{\Omega}) $ be a viscosity solution to \eqref{Main:eq1}. Then if $ 0 < m \leq 1+i(\Phi)$, there exist
constants
\begin{equation*}
  \mathrm{C}_{1} = \mathrm{C}_{1}(n, i(\Phi), s(\Phi), m, \lambda, \Lambda, L, \nu_{0}, |\Omega|, \mathcal{M}_{1}, \mathcal{M}_{2}) \geq 1, \quad \mathrm{C}_{2} = \mathrm{C}_{2}(n, s(\Phi), m) >0
\end{equation*}
and
\begin{equation*}
  \mu:= \frac{n(1+s(\Phi))}{m}
\end{equation*}
such that, for $ \mu_{0} = (\min\{1, ||f||_{L^{n}(\Omega)}^{\mu}\})^{-1} $, we have
\begin{equation*}
  ||u||_{L^{\infty}(\Omega)} \leq ||g||_{L^{\infty}(\partial \Omega)}  +   \mathrm{C}_{2} \mathrm{C}_{1}^{\mu_{0}} \mathrm{diam}(\Omega) \max \bigg\{||f||_{L^{n}(\Omega)}^{\frac{1}{m}} +  ||f||_{L^{n}(\Omega)}^{\frac{s(\Phi)+1}{m(i(\Phi)+1)}} \bigg\}.
\end{equation*}
In particular, if $ m = i(\Phi)+1 = s(\Phi) +1 $ or $ ||f||_{L^{n}(\Omega)} \geq 1 $, we have
\begin{equation*}
  ||u||_{L^{\infty}(\Omega)} \leq ||g||_{L^{\infty}(\partial \Omega)}   +   \mathrm{C}_{1} \mathrm{C}_{2} \mathrm{diam}(\Omega) ||f||_{L^{n}(\Omega)}.
\end{equation*}
\end{theorem}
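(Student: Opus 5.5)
The statement is taken from \cite[Theorem 1.3]{BFO26}, so in the paper it could simply be cited. To make the dependence on the structure explicit, I would reprove it by reducing \eqref{Main:eq1} to a pair of Pucci-type inequalities with gradient growth. Then I would run the classical ABP argument with a modified (Orlicz-type) measure on the gradient image of the upper contact set.

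\textbf{Step 1 (reduction to a contact-set inequality).} By symmetry it suffices to bound $\sup_\Omega u - \sup_{\partial\Omega} g^{+}$ from above. Let $\Gamma$ be the concave envelope of $(u-\sup_{\partial\Omega}g^{+})^{+}$, extended by zero to a ball containing $\Omega$, and let $\mathcal{C}$ be the upper contact set. At contact points $x\in\mathcal{C}$ with $D\Gamma(x)\neq 0$, $\Gamma$ touches $u$ from above. The subsolution property, \hyperref[A1]{\bf (A1)} with $F(\mathrm{O}_n,x)=0$, and \hyperref[A4]{\bf (A4)} then give
\begin{equation*}
  \Phi(|D\Gamma|,x)\,\mathscr{P}^{+}_{\lambda,\Lambda}(D^{2}\Gamma)\geq -|f|-\mathcal{M}_{1}-\mathcal{M}_{2}|D\Gamma|^{m}.
\end{equation*}
Since $D^{2}\Gamma\le 0$ on $\mathcal{C}$, this yields
\begin{equation*}
  \lambda\,|\mathrm{tr}\,D^{2}\Gamma|\le \frac{|f|+\mathcal{M}_{1}+\mathcal{M}_{2}|D\Gamma|^{m}}{\Phi(|D\Gamma|,x)}.
\end{equation*}
Using \hyperref[A3]{\bf (A3)}(i)--(ii), we have $\Phi(t,x)\ge L^{-1}\nu_{0}\min\{t^{i(\Phi)},t^{s(\Phi)}\}$. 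The rigorous pointwise version would use the standard sup-convolution and semiconcavity approximation, as in \cite{CC95}.

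\textbf{Step 2 (modified area formula).} Choose a weight $w(\xi)=\big(\delta+|\xi|^{m}\big)^{-n}\min\{|\xi|^{i(\Phi)},|\xi|^{s(\Phi)}\}^{n}$ and integrate $\det(-D^{2}\Gamma)\,w(D\Gamma)$ over $\mathcal{C}$. The AM--GM inequality and Step 1 give
\begin{equation*}
  \int_{B_{M/(2\,\mathrm{diam}\Omega)}} w(\xi)\,d\xi\le \mathrm{C}(n,\lambda,L,\nu_0)\int_{\mathcal{C}}\Big(\frac{|f|+\mathcal{M}_{1}+\mathcal{M}_{2}|D\Gamma|^{m}}{\delta+|D\Gamma|^{m}}\Big)^{n}dx,
\end{equation*}
where $M=\sup(u-\sup g^+)^+$. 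The ball inclusion is the standard fact $B_{M/d}\subset D\Gamma(\mathcal{C})$. Choosing $\delta$ comparable to $\|f\|_{L^n}$-scaled data bounds the right-hand side by $\mathrm{C}(\|f\|_{L^{n}}^{n}+|\Omega|)$.

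\textbf{Step 3 (inverting the left-hand side).} Compute the growth of $\int_{B_R} w$ in $R$. For small $R$ it behaves like $R^{n(1+s(\Phi))}$. For large $R$ it behaves like $R^{n(1+i(\Phi)-m)}$ (up to logarithms when $m=1+i(\Phi)$), or like $R^{n}$ times powers of $\delta$ in intermediate ranges. Inverting in each regime produces the two powers $\|f\|^{1/m}$ and $\|f\|^{(1+s(\Phi))/(m(1+i(\Phi)))}$, with $\mu=n(1+s(\Phi))/m$ arising from the exponent of the weight.

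\textbf{Main obstacle.} The main obstacle is the critical case $m=1+i(\Phi)$. There the left-hand integral grows only logarithmically, so the bound must be exponential in the data. This is exactly the origin of the factor $\mathrm{C}_{1}^{\mu_{0}}$: one gets $\log(R)\lesssim\|f\|_{L^n}^{\mu}+\dots$, which is then inverted. I would handle it as in the classical ABP with first-order terms (Fok / Koike--\'Swi\c{e}ch), integrating $1/(\delta^{n/m}+|\xi|^{n})$. The case $\|f\|_{L^n}\ge1$ or $m=i(\Phi)+1=s(\Phi)+1$ reduces to the linear bound because the exponents then coincide.
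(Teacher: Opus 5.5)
The paper gives no proof of this estimate; it imports it from \cite[Theorem 1.3]{BFO26}. Your weighted ABP argument is therefore a genuinely self-contained route, and Steps 1--2 are sound in outline: apply the area formula on $\{|D\Gamma|>\gamma\}$, where the $C^{1,1}$ control of $\Gamma$ at contact points is uniform, and then let $\gamma\to0$.

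Step 3 and your ``main obstacle'' paragraph, however, misplace the factor $\mathrm{C}_{1}^{\mu_{0}}$. Inverting $\log R\lesssim\|f\|_{L^{n}}^{\mu}+\cdots$ gives $R\lesssim e^{\mathrm{C}\|f\|_{L^n}^{\mu}}$, which grows with $\|f\|$. By contrast, $\mu_{0}=\|f\|_{L^{n}}^{-\mu}$ is large only for \emph{small} $\|f\|_{L^{n}}$, and $\mu_{0}=1$ when $\|f\|_{L^{n}}\ge1$, exactly where the statement asserts a polynomial bound.

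The real mechanism is different. The terms $\mathcal{M}_{2}|\xi|^{m}/(\delta+|\xi|^{m})\le\mathcal{M}_{2}$ and $\mathcal{M}_{1}/\delta$ never become small, so the right-hand side of Step 2 always contains an $f$-independent constant. Take $\delta:=\max\{1,\|f\|_{L^{n}(\Omega)}\}$. Then the right-hand side is at most $C_{*}:=\mathrm{C}\bigl(1+(\mathcal{M}_{1}^{n}+\mathcal{M}_{2}^{n})|\Omega|\bigr)$. Your weight satisfies $w(\xi)\ge2^{-n}|\xi|^{n(i(\Phi)-m)}$ for $|\xi|\ge\delta^{1/m}\,(\ge1)$. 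Hence both the subcritical case and the critical case (where $\log(R\,\delta^{-1/m})\lesssim C_{*}$) give $R\le\mathrm{C}\delta^{1/m}$, that is,
\[
\sup_{\Omega}u\le\sup_{\partial\Omega}g^{+}+\mathrm{C}\,\mathrm{diam}(\Omega)\max\{1,\|f\|_{L^{n}(\Omega)}\}^{1/m}.
\]
This yields the displayed estimate for $f\not\equiv0$ after enlarging $\mathrm{C}_{1}$, because $\inf_{0<a<1}\mathrm{C}_{1}^{a^{-\mu}}a^{1/m}\to\infty$ as $\mathrm{C}_{1}\to\infty$. So $\mathrm{C}_{1}^{\mu_{0}}$ merely absorbs the non-vanishing $\mathcal{M}_{1},\mathcal{M}_{2}$ contributions at small $\|f\|$. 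It is not an imprint of the logarithm, and the second power of $\|f\|$ is never needed.

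The genuine gap is your last sentence. When $m=1+i(\Phi)=1+s(\Phi)$ the two exponents do coincide, but they equal $1/m$, not $1$, and $\mathrm{C}_{1}^{\mu_{0}}$ does not disappear for $\|f\|_{L^{n}}<1$. When $\|f\|_{L^{n}}\ge1$, the exponent $1/m$ exceeds $1$ whenever $m<1$, which always happens in the singular range $i(\Phi)<0$.

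In fact the ``in particular'' clause cannot be derived at all, because it is false as written. On $\Omega=B_{1}$ take $F(\mathrm{M})=\operatorname{tr}\mathrm{M}$, $\Phi(t,x)=t$ (so $i(\Phi)=s(\Phi)=1$ and $m=2$), $H\equiv0$, $g\equiv0$ and $f\equiv\varepsilon>0$. Then $u(x)=c(|x|^{3/2}-1)$ with $\tfrac94c^{2}(n-\tfrac12)=\varepsilon$ solves the equation classically away from $0$. At $0$ it is $C^{1}$ with $Du(0)=0$, no $C^{2}$ function touches it from above, and test functions from below have zero gradient. So it is a viscosity solution, yet $\|u\|_{L^{\infty}}=c\sim\varepsilon^{1/2}$, not $O(\varepsilon)$. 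With $H\equiv-\mathcal{M}_{1}$ and $\tfrac94c^{2}(n-\tfrac12)=\mathcal{M}_{1}+\varepsilon$, the sup norm does not even tend to $0$. Similarly, in the singular range with $\|f\|_{L^{n}}\ge1$, the natural scaling $\|u\|_{L^{\infty}}\sim\|f\|^{1/(1+i(\Phi))}$ is superlinear.

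No refinement of your argument can deliver a linear bound; what it honestly gives is the $\|f\|^{1/m}$-type bound above.
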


The following result is the ``Cutting Lemma''. Although the degenerate term $ \Phi(\cdot, x)$ in the present setting is more general than \cite[Lemma 6]{IS13}, the argument developed for \cite[Lemma 6]{IS13} carries over to this situation.

\begin{lemma}[{Cutting lemma}]
\label{Section 2:lem1}
Suppose that the condition \hyperref[A1]{\bf (A1)} and \hyperref[A3]{\bf (A3)} hold. Let $ u $ be a viscosity solution to
\begin{equation*}
  \Phi(|Du|,x) F(D^{2}u) = 0   \quad   \text{in}  \quad  B_{1}
\end{equation*}
with $ i(\Phi) \geq 0 $. Then $ u $ is a viscosity solution to
\begin{equation*}
  F(D^{2}u) = 0    \quad   \text{in}  \quad  B_{1}.
\end{equation*}
\end{lemma}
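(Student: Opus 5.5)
We follow the argument of \cite[Lemma 6]{IS13}. By symmetry it suffices to show that $u$ is a viscosity supersolution of $F(D^2u)=0$ in $B_1$; the subsolution property is analogous, with the inequalities reversed. We argue by contradiction. Suppose $\varphi\in C^2$ touches $u$ from below at $x_0$, say $u-\varphi$ has a strict local minimum at $x_0$ with $u(x_0)=\varphi(x_0)$, and $F(D^2\varphi(x_0))>0$. By the ellipticity assumption \hyperref[A1]{\bf (A1)} we may replace $\varphi$ by a quadratic polynomial $P(x)=\varphi(x_0)+b\cdot(x-x_0)+\tfrac12\langle M(x-x_0),x-x_0\rangle$ with $M=D^2\varphi(x_0)-\varepsilon\,\textbf{I}\mathrm{d_{n}}$ for small $\varepsilon>0$. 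Then $F(M)>0$ still holds, and $P$ touches $u$ strictly from below in a small ball $B_r(x_0)$.

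\textbf{Case $b\neq 0$.} Here $D\varphi(x_0)=b\neq0$, so the definition of viscosity supersolution (Definition \ref{subsection 2.1}) applies directly and gives $\Phi(|b|,x_0)F(D^2\varphi(x_0))\le 0$. By \hyperref[A3]{\bf (A3)}(i) together with (ii) we have $\Phi(t,x)\ge L^{-1}\nu_0\min\{t^{i(\Phi)},t^{s(\Phi)}\}>0$ for $t>0$. Hence $F(D^2\varphi(x_0))\le0$, a contradiction.

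\textbf{Case $b=0$.} This is the main obstacle. Since $F(M)>0$ and $F(\mathrm{O}_n)=0$, ellipticity forces $M$ to have at least one positive eigenvalue. We perturb the test function to $P_\eta(x)=P(x)+\eta\cdot(x-x_0)$ with $|\eta|$ small. Because the touching is strict, after adding a suitable small constant, $P_\eta$ touches $u$ from below at some point $x_\eta\in B_r(x_0)$, and $x_\eta\to x_0$ as $\eta\to0$. At $x_\eta$ we have $DP_\eta(x_\eta)=M(x_\eta-x_0)+\eta$. If this gradient is nonzero, the equation gives $\Phi(\cdot)F(M)\le0$, and positivity of $\Phi$ yields $F(M)\le 0$, a contradiction. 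So the contact point must satisfy $x_\eta=x_0-M^{-1}\eta$ (or, when $M$ is singular, $\eta\in\operatorname{Ran}M$). The plan is therefore to choose $\eta$ so that this cannot happen. If $M$ is singular, pick $\eta\notin\operatorname{Ran}M$; then the gradient at the contact point is never zero.

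If $M$ is invertible, we instead follow \cite{IS13}. Decompose $\mathbb{R}^n=E_+\oplus E_-$ into the spectral subspaces of $M$ for positive and nonpositive eigenvalues, and use the modified test function $\tilde P=P+\eta\cdot(x-x_0)$ restricted to directions in $E_+$, or alternatively $P(x)+\delta|\Pi_{E_+}(x-x_0)|$. The latter is convex in the positive directions, so wherever it is not smooth it cannot touch from below. This shows that the contact points with vanishing gradient form a set the test function cannot reach, and so we can always find $\eta$ for which the contact point has nonzero gradient. The conclusion $F(M)\le0$ then follows as in the case $b\neq0$, contradicting $F(M)>0$.

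The only feature of $\Phi$ used in this argument is $\Phi(t,x)>0$ for $t>0$, which follows from \hyperref[A3]{\bf (A3)}. The hypothesis $i(\Phi)\ge0$ merely keeps us in the degenerate regime, where $\Phi(0,x)=0$ and the equation at zero gradient carries no information. Since the right-hand side is zero, the alternative in Definition \ref{subsection 2.1} where $u$ is locally constant is harmless: a locally constant function trivially satisfies $F(D^2u)=0$ there, because $F(\mathrm{O}_n)=0$.
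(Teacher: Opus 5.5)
Your reduction to a strictly touching quadratic $P$, the case $b\neq0$, and the subcase $b=0$ with $M$ singular (choosing $\eta\notin\operatorname{Ran}M$) are all correct; the paper gives no proof of its own and only defers to \cite[Lemma 6]{IS13}.

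The gap is the subcase $b=0$ with $M$ invertible, which is the heart of the lemma. You state the right goal: choose $\eta$ so that the contact point cannot be $x_0-M^{-1}\eta$. But you never achieve it. The justification you give for the alternative test function $P+\delta|\Pi_{E_+}(x-x_0)|$ is also false. A function with a convex kink \emph{can} touch $u$ from below at the kink; for instance $\delta|x|$ touches $|x|$ from below at $0$ when $\delta\le1$.

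What makes that test function work is two separate facts.
\begin{itemize}
\item At a contact point $x_\delta$ with $\Pi_{E_+}(x_\delta-x_0)=0$, convexity gives, up to an additive constant, $u\ge P+\delta|\Pi_{E_+}(x-x_0)|\ge P+\delta\, e\cdot(x-x_0)$ for every unit $e\in E_+$, with equality at $x_\delta$. So the \emph{smooth} function $P+\delta\, e\cdot(x-x_0)$ touches $u$ there with Hessian $M$, and its gradient has $E_+$-component $\delta e\neq0$.
\item At a contact point with $\Pi_{E_+}(x_\delta-x_0)\neq0$, the test function is smooth and its gradient has positive inner product with $\Pi_{E_+}(x_\delta-x_0)$. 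However, its Hessian is $M+\delta D^2|\Pi_{E_+}(\cdot-x_0)|$, not $M$. You need convexity together with ellipticity to get $F\ge F(M)>0$ there.
\end{itemize}
Your closing sentence, ``the conclusion $F(M)\le0$ then follows as in the case $b\neq0$'', skips both points.

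In fact your linear-perturbation plan closes on its own, uniformly in whether $M$ is singular. You only need the ingredient you never invoke: $w:=u-P\ge0$ near $x_0$ with $w(x_0)=0$.
\begin{itemize}
\item Since $F(M)>0=F(\mathrm{O}_n)$, $M$ has a unit eigenvector $e$ with eigenvalue $\mu>0$. Take $\eta=te$ with $t\neq0$ small.
\item Suppose $DP_\eta(x_\eta)=M(x_\eta-x_0)+te=0$. Pairing with $e$ gives $e\cdot(x_\eta-x_0)=-t/\mu$.
\item But $x_\eta$ minimizes $w-te\cdot(x-x_0)$, which vanishes at $x_0$. Hence $0\le w(x_\eta)\le t\,e\cdot(x_\eta-x_0)=-t^2/\mu<0$, a contradiction.
\end{itemize}
So the gradient at $x_\eta$ is nonzero, and the first alternative in the definition of supersolution applies at $x_\eta$. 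The locally-constant alternative is impossible there, because a function with nonzero gradient cannot touch a locally constant function from below. Positivity of $\Phi$ on $(0,\infty)$ then yields $F(M)\le0$.

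A minor side remark: $i(\Phi)\ge0$ does not force $\Phi(0,x)=0$ when $i(\Phi)=0$ (e.g.\ $\Phi\equiv1$), but this plays no role in the argument.
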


The following lemma plays a crucial role in the proof of Theorem \ref{Thm1}. To prove this, one can follow the lines of proof of \cite[Lemma 2.6]{BBLL24b} or \cite[Theorem 4.1]{BBLL24a}.

\begin{lemma}[{Stability}]
\label{Section2:lem2}
Let $ \{g_{j}\}_{j \in \mathbb{N}} $ be a sequence of Lipschitz continuous functions such that $ g_{j} \rightarrow g_{\infty} $. Assume that $ u \in C^{0}(B_{1} \cap \{y_{n}>\varphi(y')\})$ is a uniformly bounded continuous viscosity solution to
\begin{equation*}
\left\{
     \begin{alignedat}{2}
        \Phi_{j}(|Du_{j}+\xi_{j}|,y) F_{j}(D^{2}u_{j},y)+ H_{j}(|Du_{j}+\xi_{j}|, y) & = f_{j}(y)        \quad  &&  \text{in} \ \ B_{1} \cap \{y_{n} > \varphi(y')\}    ,    \\
          u_{j}(y)  & = g_{j}(y)    \quad  && \text{on}   \ \  B_{1}  \cap \{y_{n} = \varphi(y')\},        \\
     \end{alignedat}
     \right.
\end{equation*}
where $ \{\xi_{j}\}_{j \in \mathbb{N}} \subset \mathbb{R}^{n} $, $ \{f_{j}\}_{j \in \mathbb{N}} \subset C^{0}(B_{1} \cap \{y_{n} > \varphi(y')\})$, and $ \{F_{j}\}_{j \in \mathbb{N}} \subset C^{0}(\mathrm{Sym}(n), \mathbb{R}) $ is uniformly $ (\lambda, \Lambda) $ elliptic. Suppose further that $ \xi_{j} \rightarrow \xi_{\infty} $, $ F_{j} \rightarrow F_{\infty} $, $ f_{j} \rightarrow 0 $(uniformly) and $ H_{j}\rightarrow 0 $(uniformly). Then one can extract a subsequence from $ \{u_{j}\}_{j \in \mathbb{N}} $ which converges uniformly to $ u_{\infty} $ on $ \overline{B_{r} \cap \{y_{n} > \varphi(y')\} } $ for any $ 0 <r <1 $. Furthermore, the limit $ u_{\infty} $ solves
\begin{equation*}
\left\{
     \begin{alignedat}{2}
       F(D^{2}u_{\infty}) & = 0       \quad  &&  \text{in} \ \ B_{r} \cap \{y_{n} > \varphi(y')\}    ,        \\
          u_{\infty}(y)  & = g_{\infty}(y)    \quad  && \text{on}   \ \  B_{r}  \cap \{y_{n} = \varphi(y')\}.        \\
     \end{alignedat}
     \right.
\end{equation*}
\end{lemma}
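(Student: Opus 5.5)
The plan is the usual compactness-plus-stability argument for viscosity solutions. It has three parts: an equicontinuity step up to the flat-ish boundary, extraction of a convergent subsequence, and identification of the limit equation.

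\textbf{Equicontinuity.} First I would secure uniform equicontinuity of $\{u_j\}$ on $\overline{B_r\cap\{y_n>\varphi(y')\}}$ for each $0<r<1$.
\begin{itemize}
\item In the interior, I would use the Hölder estimate for the perturbed equation (Section~\ref{Section 3}). It is uniform in $j$ because $\|u_j\|_\infty$, $\|f_j\|_\infty$ and the constants in $H_j$ are bounded.
\item Near $\{y_n=\varphi(y')\}$, I would use the boundary modulus obtained from barriers built from the $C^2$ distance function together with the comparison principle. The needed control is that the $g_j$ are Lipschitz with uniformly bounded constants.
\end{itemize}
Combining the interior and boundary moduli in the standard way gives a common modulus of continuity. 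Arzelà--Ascoli then yields a subsequence with $u_j\to u_\infty$ uniformly, and $u_\infty=g_\infty$ on the boundary portion.

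\textbf{Identification of the limit equation.} Next I would show $F_\infty(D^2u_\infty)=0$ in the viscosity sense. I only treat the supersolution property; the subsolution case is symmetric.
\begin{itemize}
\item Take a quadratic polynomial $P$ touching $u_\infty$ strictly from below at $y_0$, and suppose $F_\infty(D^2P)>0$ for contradiction.
\item By uniform convergence, $P+c_j$ touches $u_j$ from below at points $y_j\to y_0$.
\item If $|DP(y_j)+\xi_j|$ stays away from $0$, then (A3) gives $\Phi_j(|DP+\xi_j|,y_j)\ge c>0$. Dividing the equation by $\Phi_j$ and letting $f_j,H_j\to0$ gives $F_\infty(D^2P)\le 0$, a contradiction.
\item The difficulty is when $DP(y_0)+\xi_\infty=0$.
\end{itemize}

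\textbf{Main obstacle: the degenerate set.} This case is the heart of the argument, and I would handle it with the Imbert--Silvestre device from \cite{IS13}, as in \cite[Lemma 2.6]{BBLL24b}.
\begin{itemize}
\item If $D^2P$ has a positive eigenvalue, perturb the test function to $P+\langle e,\cdot\rangle$ with $|e|$ small, chosen so that $|DP+e+\xi_j|$ stays bounded below near the touching point. The bound $\Phi_j(t)\gtrsim \nu_0 \min(t^{s(\Phi)},t^{i(\Phi)})$ from (A3)(i)--(ii) then lets us divide, and the previous step applies.
\item Otherwise $D^2P\le0$, so $F_\infty(D^2P)\le0$ directly by ellipticity and $F(0)=0$.
\item If $P$ has a positive eigenvalue but the gradient degenerates along a whole set, restrict to the subspace spanned by the positive eigenvectors. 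Replace $P$ by $P+\kappa|Q y|$, with $Q$ the projection onto that subspace, and use the fact that the touching point must avoid the kernel. This is the step most sensitive to the interplay of $\Phi_j$ with the term $H_j$, which now only tends to $0$ uniformly.
\end{itemize}
Since the $H_j$ term vanishes uniformly, it is absorbed exactly as $f_j$ is. The result is $F_\infty(D^2u_\infty)=0$ in $B_r\cap\{y_n>\varphi(y')\}$, which is the claimed limit problem.
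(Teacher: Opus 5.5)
Your outline matches the route the paper has in mind; the paper gives no argument of its own and only refers to \cite[Lemma 2.6]{BBLL24b} and \cite[Theorem 4.1]{BBLL24a}. The route is: uniform H\"older bounds from Section 3 with Arzel\`a--Ascoli, division by $\Phi_j$ where $DP(y_0)+\xi_\infty\neq 0$, and an Imbert--Silvestre argument where it vanishes. The problem is the degenerate step, which is the only nontrivial part: it does not work as you wrote it. Your first bullet asks for a small $e$ making $|DP+e+\xi_j|$ bounded below near the touching point, uniformly in $j$. If $A:=D^2P$ is invertible this is impossible. Indeed, $DP(y)+e+\xi_\infty=A(y-y_0)+e$ vanishes at $y=y_0-A^{-1}e$, which lies in any fixed ball around $y_0$ once $|e|$ is small. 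What you need is a lower bound at the \emph{new contact point}, and for that you must control where it lands. Take $e$ a unit eigenvector of $A$ with eigenvalue $\mu>0$; one exists, since $A\le 0$ would give $F_\infty(A)\le \mathscr{P}^{+}_{\lambda,\Lambda}(A)\le 0$. Let $y_j$ minimize $u_j-P-\epsilon\, e\cdot(y-y_0)$ on $\overline{B_\delta(y_0)}$; for $\epsilon$ small and $j$ large, $y_j$ is interior. Compare with the value at $y_0$ and use $u_\infty-P\ge 0=(u_\infty-P)(y_0)$. This gives $e\cdot(y_j-y_0)\ge -2\epsilon^{-1}\|u_j-u_\infty\|_{\infty}$, hence
\[
\big\langle DP(y_j)+\epsilon e+\xi_j,\,e\big\rangle=\mu\, e\cdot(y_j-y_0)+\epsilon+e\cdot(\xi_j-\xi_\infty)\geq \frac{\epsilon}{2}
\]
for $j$ large. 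This lower bound, uniform in $j$, is what lets you use $\Phi_j(t,\cdot)\ge L^{-1}\min\{t^{s(\Phi)},t^{i(\Phi)}\}$, absorb $f_j$ and $H_j$, and conclude $F_\infty(A)\le 0$.

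With this fix your third bullet becomes unnecessary, and as written it relies on something you cannot assume. The minimum point of $u_j-P-\kappa|Q(y-y_0)|$ need not avoid $\{Q(y-y_0)=0\}$. In \cite{IS13} that case is handled separately, using the linear test function $P+\kappa\, e\cdot(y-y_0)$ with $e\in E=Q\mathbb{R}^n$. Its gradient has $E$-component $\kappa e+Q(\xi_j-\xi_\infty)$, because $A$ preserves $E^{\perp}$. Off that set, you omit the real difficulty: $D^2|Q(\cdot-y_0)|$ is positive semidefinite but unbounded near the kernel, so it cannot be passed to the limit. Instead you discard it by ellipticity, $F_j(A+\kappa D^2|Q(\cdot-y_0)|)\ge F_j(A)$. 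That step, together with the uniform gradient lower bound, is the delicate point. The interplay with $H_j$ is not; it is harmless once the gradient is bounded above and below. Finally, your equicontinuity step needs more than bounded data. Propositions~\ref{Section3:Prop3}, \ref{Section3:Thm5} and \ref{Section3:Thm7} require $i(\Phi)\ge 0$, (A3) for $\Phi_j$ with uniform constants and $\Phi_j(1,\cdot)=1$, and, when $m>i(\Phi)$, smallness of $\mathcal{M}_{2,j}(1+|\xi_j|^{m-i(\Phi)})$. All of these hold for large $j$ in the setting of Lemma~\ref{Section4:lem1}, but they are not part of the stated lemma.
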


We finish this section by providing the interior regularity results shown in \cite[Theorem 1.1]{HJMZ26a}.

\begin{theorem}\label{Section2:thm3}
Let $ u \in C^{0}(\Omega) $ be a viscosity solution of
\begin{equation*}
  \Phi(|Du|,x) F(D^{2}u,x)+ H(|Du|, x)  = f  \quad  \text{in}  \quad \Omega,
\end{equation*}
under the assumptions \hyperref[A1]{\bf (A1)}--\hyperref[A4]{\bf (A4)} with $ f \in L^{\infty}(\Omega) \cap C^{0}(\Omega)$. Then $ u \in C^{0,\beta'}_{ \mathrm{loc}}(\Omega)$ for all $ \beta > 0 $ fulfilling
\begin{equation*}
\beta' \in \left\{
     \begin{alignedat}{2}
         & (0,\alpha_{0}) \cap \bigg(0, \frac{1}{1+s(\Phi)}\bigg]           \quad  &&  \text{if} \quad i(\Phi) \geq 0    ,      \\
          &   (0,\alpha_{0}) \cap \bigg(0, \frac{1}{1+s(\Phi)-i(\Phi)}\bigg]  \quad  &&  \text{if} \quad  -1 < i(\Phi) <0.       \\
     \end{alignedat}
     \right.
\end{equation*}
More precisely, for any $ \Omega' \Subset \Omega $, there holds

$ (i) $ if $ 0< m < 1+ i(\Phi) $, then
\begin{equation*}
  ||u||_{C^{1,\beta'}(\Omega')} \leq \mathrm{C} \bigg( 1+ ||u||_{L^{\infty}(\Omega)}  + \bigg(\frac{\mathcal{M}_{2}}{\nu_{0}} \bigg)^{\frac{1}{1+i(\Phi)-m}} +  \bigg(\frac{||f||_{L^{\infty}(\Omega)}+\mathcal{M}_{1}}{\nu_{0}}\bigg)^{\frac{1}{1+i(\Phi)}} \bigg)
\end{equation*}
where the constant $ \mathrm{C} $ depends on $ n, \lambda, \Lambda, \beta, m, \theta, L $ and $ i(\Phi) $;

$ (ii) $ if $ m = 1+ i(\Phi) $, then
\begin{equation*}
  ||u||_{C^{1,\beta'}(\Omega')} \leq \mathrm{C} \big( 1+ ||u||_{L^{\infty}(\Omega)}    \big).
\end{equation*}
where the constant $ \mathrm{C} $ depends on $ \nu_{0}, ||f||_{L^{\infty}(\Omega)}, \mathcal{M}_{1} $ and $ \mathcal{M}_{2} $.
\end{theorem}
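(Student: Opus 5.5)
Theorem~\ref{Section2:thm3} is the interior result of \cite[Theorem 1.1]{HJMZ26a}. I would reproduce its proof in the usual Imbert--Silvestre / Ara\'ujo--Ricarte--Teixeira scheme, adapted to the general degeneracy $\Phi$ and the Hamiltonian $H$.

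\textbf{Normalization.} Fix $\Omega'\Subset\Omega$ and $x_0\in\Omega'$. Set
\[
v(y)=\frac{u(x_0+\rho y)}{\kappa},\qquad \kappa:=1+\|u\|_{L^\infty}+\Big(\frac{\mathcal M_2}{\nu_0}\Big)^{\frac{1}{1+i(\Phi)-m}}+\Big(\frac{\|f\|_{\infty}+\mathcal M_1}{\nu_0}\Big)^{\frac{1}{1+i(\Phi)}} .
\]
Using (A3)(i)--(ii), the rescaled $\tilde\Phi(t,y)=\Phi(\kappa t/\rho,x_0+\rho y)/\Phi(\kappa/\rho,x_0+\rho y)$ satisfies the same almost-monotonicity and is normalized at $t=1$. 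The rescaled $\tilde H$ and $\tilde f$ then have size at most a small $\varepsilon$. In the critical case $m=1+i(\Phi)$ the ratio $\mathcal M_2$-term is scale invariant, so one only uses $\rho$ small and lets the constant depend on $\nu_0,\|f\|_\infty,\mathcal M_1,\mathcal M_2$.

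\textbf{Compactness.} I would prove uniform interior H\"older (in fact Lipschitz) estimates for solutions of the perturbed equation
\[
\tilde\Phi(|Dv+\xi|,y)F(D^2v,y)+\tilde H(|Dv+\xi|,y)=\tilde f ,
\]
uniformly in $\xi\in\mathbb R^n$, by splitting into two regimes.
\begin{itemize}
\item Where $|Dv+\xi|$ is small, the equation is controlled as in \cite{IS13}: test functions with large gradient force $|D\varphi+\xi|\ge1$.
\item Where $|Dv+\xi|$ is large, divide by $\tilde\Phi$. This gives $|F(D^2v)|\lesssim \varepsilon(1+|Dv+\xi|^{m-i(\Phi)})$, with $m-i(\Phi)\le1$, so Ishii--Lions (the CIL lemma) yields a Lipschitz bound.
\end{itemize}

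\textbf{Approximation.} Using the stability Lemma~\ref{Section2:lem2} (interior version) and the cutting Lemma~\ref{Section 2:lem1}, a contradiction argument gives the following. For every $\delta$ there is $\varepsilon$ such that $v$ is $\delta$-close in $B_{1/2}$ to some $h$ with $F_\infty(D^2h)=0$. This uses that $f_j,H_j\to0$ and that $\Phi_j(|\cdot+\xi_j|)$ vanishes at most where the limit has the cutting structure. When $\xi_j$ is unbounded, the limit is uniformly elliptic directly. Since $h\in C^{1,\alpha_0}$, one finds an affine $\ell$ with $\sup_{B_r}|v-\ell|\le r^{1+\beta'}$ for a universal $r$ and $\beta'<\alpha_0$.

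\textbf{Iteration.} Iterate on $v_k(y)=(v-\ell_k)(r^ky)/r^{k(1+\beta')}$, which solves an equation of the same type with $\xi=D\ell_k/r^{k\beta'}$. The degeneracy now carries the factor $r^{k\beta'(1+s(\Phi))}$, or in the singular case $r^{k\beta'(1+s(\Phi)-i(\Phi))}$ after using the lower bound exponent $i(\Phi)$, against $r^{k(1-\beta')}$ from the source. The source term stays $\le\varepsilon$ exactly when $\beta'\le 1/(1+s(\Phi))$, respectively $\beta'\le 1/(1+s(\Phi)-i(\Phi))$, which is where the exponent restriction comes from. Standard arguments then give $C^{1,\beta'}$ at $x_0$. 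For the singular case $i(\Phi)<0$, I would first prove Lipschitz bounds and then reduce to the degenerate argument via small-gradient analysis.

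\textbf{Main obstacle.} The step I expect to be hardest is keeping $\tilde H$ small along the iteration when $i(\Phi)<m\le1+i(\Phi)$. The rescaled Hamiltonian behaves like $r^{k(\cdots)}|Dv_k+\xi_k|^m$ with $\xi_k$ possibly large, so its smallness must be traded against the lower bound on $\tilde\Phi$ via (A3). I would first try to handle this by the dichotomy: either $|\xi_k|$ is small, or the equation is uniformly elliptic near $x_0$, in which case one appeals to classical $C^{1,\alpha_0^-}$ theory.
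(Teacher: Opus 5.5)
\textbf{Verdict: genuine gap.} Your overall scheme is the right one, but its compactness step fails in the regime $i(\Phi)<m\le 1+i(\Phi)$.

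The paper does not prove this statement; it imports it from \cite[Theorem 1.1]{HJMZ26a}. Its own Sections~\ref{Section 3}--\ref{Section 4} run the boundary version of the scheme you outline:
\begin{itemize}
\item normalization as in Remark~\ref{Section3:rmk1};
\item Ishii--Lions estimates for the $\xi$-shifted equation, split into small and large $|\xi|$;
\item approximation by $F$-harmonic functions via stability and the cutting lemma;
\item iteration under the exponent restriction;
\item for $i(\Phi)<0$, a Lipschitz bound followed by division by $|Du|^{i(\Phi)}$.
\end{itemize}

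You claim compactness ``uniformly in $\xi\in\mathbb{R}^n$'' under only $\|f\|_\infty,\mathcal{M}_1,\mathcal{M}_2\le\varepsilon$. In this regime that claim is false, and so is the approximation lemma built on it. Your own bound shows why: at the test points $|Dv+\xi|^{m-i(\Phi)}\approx|\xi|^{m-i(\Phi)}$, so after dividing by $\Phi$ the right-hand side has size $\varepsilon|\xi|^{m-i(\Phi)}$, which is unbounded in $\xi$.

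Here is a concrete counterexample. Take $F=\operatorname{tr}$, $\Phi(t,y)=t^{i}$ with $i\ge0$, $m=1+i$, $f\equiv0$ and $\xi=\tau e_1$. Set $\omega:=\sqrt{\varepsilon\tau/2}$, so that $\omega\le\tau/2$ once $\tau\ge2\varepsilon$. Let $v(y)=\sin(\omega y_1)$ and $H(t,y):=\varepsilon t^{m}h(y)$ with $h(y):=\omega^{2}\sin(\omega y_1)/\bigl(\varepsilon(\tau+\omega\cos(\omega y_1))\bigr)$. Then $|h|\le1$, hence $|H(t,y)|\le\varepsilon t^{m}$. Moreover $v$ is a classical, hence viscosity, solution with $\|v\|_\infty\le1$. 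Since $\omega\to\infty$ as $\tau\to\infty$, there is neither a uniform H\"older bound nor $\sigma$-closeness to $F$-harmonic functions on $B_{1/2}$ for $\sigma<1$.

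The missing idea is to build $\mathcal{M}_2\bigl(1+|\xi|^{(m-i(\Phi))_+}\bigr)\le\varepsilon$ into the hypotheses of the compactness and approximation statements, as in \eqref{Section3:eq8} and Lemma~\ref{Section4:lem1}.
\begin{itemize}
\item For $|\xi|>A_0$, at the Ishii--Lions points $|\xi_{\hat x}+\xi|\sim|\xi|$, so $H/\Phi\lesssim\mathcal{M}_1+\mathcal{M}_2|\xi|^{m-i(\Phi)}\lesssim\varepsilon$.
\item For $|\xi|\le A_0$, your argument goes through.
\end{itemize}

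You then need this hypothesis to propagate along the iteration. That is a short exponent count, not something requiring a dichotomy. With $\xi_k=b_kr^{-k\beta'}$ and $\mathcal{M}_{2;k}\le L\mathcal{M}_2\,r^{k(1-\beta'(1+s(\Phi)-m))}$ one gets
\[
\mathcal{M}_{2;k}|\xi_k|^{m-i(\Phi)}\le L\mathcal{M}_2\,r^{k(1-\beta'(1+s(\Phi)-i(\Phi)))}|b_k|^{m-i(\Phi)}.
\]
The exponent is nonnegative because $i(\Phi)\ge0$ and $\beta'\le\frac{1}{1+s(\Phi)}$. Also $|b_k|\le\sum_j|b_j-b_{j-1}|\le C/(1-r^{\beta'})$ is bounded. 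This is exactly \eqref{Section4:eq24}--\eqref{Section4:eq27}.

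The dichotomy you propose does not get around this. In its branch where $|\xi_k|$ is large, the equation for $v_k$ is uniformly elliptic only with right-hand side of size $\mathcal{M}_{2;k}|\xi_k|^{m-i(\Phi)}$. Uniform classical estimates there therefore need exactly the same bound.
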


\section{Boundary H\"{o}lder regularity and Lipschitz regularity}\label{Section 3}

In this section, to obtain further Lipschitz estimate up to the boundary as in \cite{BBLL24b, BSRR23}, we shall consider a bounded viscosity solution to
\begin{equation}
\label{Section3:eq1}
\left\{
     \begin{alignedat}{2}
         \Phi(|Du+\xi|,y) F(D^{2}u,y)+ H(|Du+\xi|, y)  & = f         \quad   &&   \text{in} \quad  B_{1} \cap \{y_{n} > \varphi(y')\}     ,    \\
          u   &  = g    \quad  &&   \text{on} \quad  B_{1} \cap \{y_{n} = \varphi(y')\} ,        \\
     \end{alignedat}
     \right.
\end{equation}
where $ \xi \in \mathbb{R}^{n} $ and $ \varphi $ as defined in Section \ref{Intro}. Firstly, we exploit the scaling properties of the shifted operator in \eqref{Section3:eq1} to reduce the problem to a smallness regime.

\begin{remark}[{Smallness regime}]\label{Section3:rmk1}
In the proof of Theorem \ref{Thm1}, we can confine the proof to
\begin{align*}
  & ||u||_{L^{\infty}(B_{1}\cap \{y_{n}>\varphi(y')\})} \leq 1, \quad ||g||_{C^{1,\beta_{g}}(B_{1} \cap \{y_{n} = \varphi(y')\} )} \leq 1, \quad \text{and}    \\
   & \quad \max \big\{||f||_{L^{\infty}(B_{1}\cap \{y_{n}>\varphi(y')\})}, ||\mathrm{osc}_{F}||_{L^{\infty}(B_{1}\cap \{y_{n}>\varphi(y')\})}, \mathcal{M}_{1}, \mathcal{M}_{2}  \big\} \leq \epsilon_{0},
\end{align*}
for some constant $ 0 < \epsilon_{0} \ll 1 $, and also $ \nu_{0} = \nu_{1} =1 $ in \hyperref[A3]{\bf (A3)}. In what follows, for a fixed $ B_{r}(x) \Subset B_{1} $, we define $ v: B_{1} \cap  \{y_{n}>\widetilde{\varphi}(y')\} \rightarrow \mathbb{R} $ by
\begin{equation*}
  v(y):=\frac{u(x+ry)}{K}
\end{equation*}
for a function $ \widetilde{\varphi} $ and a positive constant $ K \geq 1 \geq r $ to be determined later. Direct calculation yield that $ v $ is a viscosity solution to
\begin{equation*}
\left\{
     \begin{alignedat}{2}
         \widetilde{\Phi}(|Dv+\widetilde{\xi}|,y) \widetilde{F}(D^{2}v,y)+ \widetilde{H}(|Dv+\widetilde{\xi}|, y)  & = \widetilde{f}         \quad   &&   \text{in} \ \ B_{1} \cap \{y_{n} > \widetilde{\varphi}(y')\}        ,    \\
             v&  = \widetilde{g}    \quad  &&   \text{on} \ \  B_{1} \cap \{y_{n} = \widetilde{\varphi}(y')\} ,        \\
     \end{alignedat}
     \right.
\end{equation*}
where
\begin{equation*}
\left\{
     \begin{aligned}
& \widetilde{F}(\mathrm{X}, y):= \frac{r^{2}}{K} F \bigg(\frac{K}{r^{2}}\mathrm{X}, x+ry \bigg), \quad  \widetilde{\Phi}(t,y):= \frac{\Phi(\frac{K}{r}t, x+ry)}{\Phi(\frac{K}{r}, x+ry)};   \\
& \widetilde{H}(t,y):=  \frac{r^{2}H(\frac{K}{r}t, x+ry)}{K\Phi(\frac{K}{r}, x+ry)};    \\
& \widetilde{f}(y):=\frac{r^{2}f(x+ry)}{K\Phi(\frac{K}{r}, x+ry)}     ,   \quad  \widetilde{g}(y):= \frac{g(x+ry)}{K};    \\
& \widetilde{\xi}:= \frac{r}{K}\xi , \quad \text{and}  \quad  \widetilde{\varphi}(y'):= \frac{\varphi(ry'+x')-x_{n}}{r}.
\end{aligned}
     \right.
\end{equation*}

Notice that $ \widetilde{F} $ is a uniformly $ (\lambda, \Lambda)$ elliptic operator, the map $ t \mapsto \frac{\widetilde{\Phi}(t,y)}{t^{i(\Phi)}} $ is also almost non-decreasing and still satisfies $ \frac{\widetilde{\Phi}(t,y)}{t^{i(\Phi)}} \leq L \frac{\widetilde{\Phi}(s,y)}{t^{s(\Phi)}} $ whenever $ 0 < t \leq s < \infty $, for some constant $ L > 0 $. Similarly, the map $ t \mapsto \frac{\widetilde{\Phi}(t,y)}{t^{s(\Phi)}} $ is also almost non-increasing and still fulfills $ L \frac{\widetilde{\Phi}(t,y)}{t^{s(\Phi)}} \geq \frac{\widetilde{\Phi}(s,y)}{s^{s(\Phi)}} $ whenever $ 0 < t \leq s < \infty $, for some constant $ L > 0 $. Here $ s(\Phi) \geq i(\Phi) \geq -1 $ as in \hyperref[A3]{\bf (A3)} and $ \widetilde{\Phi}(1,y) = 1 $ for all $ y \in B_{1} $.

Furthermore, a routine calculation reveals that
\begin{align*}
  \mathrm{osc}_{\widetilde{F}}(y,0):= \sup_{\mathrm{M} \in \mathrm{Sym}(n)\setminus\{0\}} \frac{|\widetilde{F}(\mathrm{M}, y)-\widetilde{F}(\mathrm{M}, 0)|}{||\mathrm{M}||} & = \sup_{\mathrm{M} \in \mathrm{Sym}(n)\setminus\{0\}} \frac{|F(\frac{K}{r^{2}}\mathrm{M}, x+ry)-F(\frac{K}{r^{2}}\mathrm{M}, x)|}{\frac{K}{r^{2}}||\mathrm{M}||}   \\
  & = \mathrm{osc}_{F}(x+ry, x),
\end{align*}
which yields that
\begin{equation*}
  ||\mathrm{osc}_{\widetilde{F}}||_{L^{\infty}(B_{1}\cap \{y_{n}>\widetilde{\varphi}(y')\})} \leq  ||\mathrm{osc}_{F}(\cdot, x)||_{L^{\infty}(B_{1}\cap \{y_{n}>\varphi(y')\})} \leq \mathrm{C}_{F} r^{\theta}.
\end{equation*}

It is immediate from the choice of $ r $ that $ ||D^{2}\widetilde{\varphi}||_{\infty} \leq ||D^{2}\varphi||_{\infty} $ and
\begin{equation*}
  ||\widetilde{g}||_{C^{1,\beta_{g}}(B_{1} \cap \{y_{n} = \widetilde{\varphi}(y')\})} \leq \frac{1}{K} ||g||_{C^{1,\beta_{g}}(\partial \Omega)}.
\end{equation*}

In addition, we combine \hyperref[A3]{\bf (A3)}, \hyperref[A4]{\bf (A4)} and $ \frac{K}{r} \gg  1 $, we have
\begin{align*}
  |\widetilde{H}(t,y)| = \frac{r^{2}}{K} \frac{H(\frac{K}{r}t, x+ry)}{|\Phi(\frac{K}{r}, x+ry)|} & \leq \frac{r^{2}}{K} \frac{1}{|\Phi(\frac{K}{r}, x+ry)|} \bigg[\mathcal{M}_{1}+\mathcal{M}_{2}\left(\frac{K}{r}\right)^{m}|t|^{m}\bigg]   \\
  & \leq \frac{Lr^{2+i(\Phi)}}{\nu_{0}K^{1+i(\Phi)}}\bigg[\mathcal{M}_{1}+\mathcal{M}_{2}\left(\frac{K}{r}\right)^{m}|t|^{m}\bigg]:=\widetilde{\mathcal{K}}_{1} + \widetilde{\mathcal{K}}_{2}|t|^{m}   \\
\end{align*}
and
\begin{equation*}
  ||\widetilde{f}||_{L^{\infty}(B_{1}\cap \{y_{n}>\widetilde{\varphi}(y')\})} \leq \frac{Lr^{2+i(\Phi)}}{\nu_{0}K^{1+i(\Phi)}} ||f||_{L^{\infty}(\Omega)}.
\end{equation*}

Now, for given $ \epsilon_{0} \in (0,1)$, which will be sufficiently small but fixed. Then, we select
\[
K :=
\begin{cases}
1 + \|u\|_{L^\infty(\Omega)} + \left[\frac{L}{\nu_0} \left(\|f\|_{L^\infty(\Omega)} + \mathcal{M}_{1}\right)\right]^{\frac{1}{1+i(\Phi)}} + \left(\frac{L\mathcal{M}_{2}}{\nu_0}\right)^{\frac{1}{1+i(\Phi)-m}} + ||g||_{C^{1,\beta_{g}}(\partial \Omega)}    \\[1em]   &    \text{for } \ m < 1 + i(\Phi),     \\[1em]
1 + \|u\|_{L^\infty(\Omega)} + ||g||_{C^{1,\beta_{g}}(\partial \Omega)}     &  \text{for } \ m = 1 + i(\Phi),  \\[1em]
\end{cases}
\]
and
\[
r :=
\begin{cases}
\min\left\{1,  \left(\frac{\epsilon_{0}}{C_F}\right)^{\frac{1}{\theta}}, \epsilon_{0}^{\frac{1}{2+i(\Phi)}}, \epsilon_{0}^{\frac{1}{2+i(\Phi)-m}}\right\} & \text{for } \ m < 1 + i(\Phi), \\[1.5em]
\min\left\{1, \left(\frac{\epsilon_{0}}{C_F}\right)^{\frac{1}{\theta}}, \left(\frac{\epsilon_{0}\nu_0}{L\left(\|f\|_{L^\infty(\Omega)} + \mathcal{M}_{1}\right)}\right)^{\frac{1}{2+i(\Phi)}}, \frac{\epsilon_{0}\nu_0}{L\mathcal{M}_{2}}\right\} & \text{for } \ m = 1 + i(\Phi).
\end{cases}
\]
With such choice, we arrive at
\[
\|v\|_{L^{\infty}(B_{1}\cap \{y_{n}>\widetilde{\varphi}(y')\})}  \leq \frac{\|u\|_{L^\infty(\Omega)}}{K} \leq 1, \quad  ||\widetilde{g}||_{C^{1,\beta_{g}}(B_{1} \cap \{y_{n} = \widetilde{\varphi}(y')\})} \leq 1,
\]
and
\[ \max\left\{||\mathrm{osc}_{\widetilde{F}}||_{L^{\infty}(B_{1}\cap \{y_{n}>\widetilde{\varphi}(y')\})}, \|\tilde{f}\|_{L^{\infty}(B_{1}\cap \{y_{n}>\widetilde{\varphi}(y')\})}, \widetilde{\mathcal{K}}_{1}, \widetilde{\mathcal{K}}_{2}\right\} \leq \epsilon_{0}.
\]
Therefore, $v$ solves an equation possessing the same structure as \eqref{Section3:eq1} and $v$ is in the smallness regime.
\end{remark}

The following lemma describes the boundary behavior of a viscosity solution $ u $ in terms of a distance function $ d $.

\begin{lemma}\label{Section3:lemma2}
Suppose that the assumptions \hyperref[A1]{\bf (A1)}--\hyperref[A6]{\bf (A6)} hold. Let $ g \in C^{1,\beta_{g}}(\partial \Omega) $ and $ d $ be be the distance to the hypersurface $ \{y_{n} = \varphi(y')\} $. Then for every $r \in (0,1)$, there exists $ C_{\varphi, B_{r}} $, depending on $ ||f||_{L^{\infty}(B_{1}\cap \{y_{n}>\varphi(y')\})} $, $ \lambda, \Lambda $, $ \beta_{g} $, $ \varphi $, $ \mathrm{Lip}_{g}(\partial \Omega) $, such that if $ u $ is a viscosity solution of
\begin{equation*}
\left\{
     \begin{alignedat}{2}
         \Phi(|Du|,y) F(D^{2}u,y)+ H(|Du|, y)  & = f         \quad   &&   \text{in} \quad  B_{1} \cap \{y_{n} > \varphi(y')\}     ,    \\
          u   &  = g    \quad  &&   \text{on} \quad  B_{1} \cap \{y_{n} = \varphi(y')\} ,        \\
     \end{alignedat}
     \right.
\end{equation*}
and $\|u\|_{L^\infty(B_1 \cap \{y_n > \varphi(y')\})} \leq 1$, then
\[
|u(y', y_n) - g(y')| \leq \mathrm{C}_{\varphi, B_{r}} d(y)   \quad   \text{in } B_r \cap \{y_n > \varphi(y')\}.
\]
\end{lemma}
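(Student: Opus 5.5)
The plan is a barrier argument: build explicit upper and lower barriers at each boundary point and use the comparison principle (Proposition~\ref{Section3:prop1}, mentioned in Step 1-a). By Remark~\ref{Section3:rmk1} we may assume the smallness regime throughout; in particular $\|u\|_\infty\le 1$ and $\|f\|_\infty,\mathcal M_1,\mathcal M_2\le \epsilon_0$ (or at least bounded). First I extend $g$ off the graph, $\tilde g(y',y_n):=g(y')$, which is Lipschitz with constant $\mathrm{Lip}_g$. Because $\varphi\in C^2$, the distance $d$ to $\{y_n=\varphi(y')\}$ is $C^2$ in a tubular neighborhood $\{d<\delta_0\}\cap B_{(1+r)/2}$, with $|Dd|=1$ and $|D^2 d|\le C_\varphi$ there. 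Points with $d\ge \delta_0$ are trivial: there $|u-g|\le 2\le (2/\delta_0)d$. So only the strip $\{d<\delta_0\}$ needs work.

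Fix a boundary point $z=(z',\varphi(z'))$ with $|z|<r$. Near $z$ I would take the upper barrier
\[
w^+(y):=g(z')+\mathrm{Lip}_g|y'-z'|+ A\,\psi(d(y))+ M|y-z|^2,
\]
with $\psi(t)=t-\kappa t^2$ concave increasing on $[0,\delta_0]$, and with $M$ chosen so that $w^+\ge 1\ge u$ on the part of the boundary of the localized domain $B_\rho(z)\cap\{y_n>\varphi\}$ lying inside $\Omega$. To compare along the graph itself, $g\le w^+$ there holds by the Lipschitz bound. A cleaner version is to work with $d$ plus a small multiple of $|y-z|^2$ and absorb the $g$-variation into it.

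The key computation is the sign of the operator on $w^+$. We have $D^2 w^+ = A\psi'' Dd\otimes Dd + A\psi' D^2 d + O(M+\mathrm{Lip}_g)$, and $\psi''=-2\kappa$ gives a large negative eigenvalue in the direction $Dd$. Hence
\[
\mathscr P^+(D^2w^+)\le -2\lambda A\kappa + \Lambda n\bigl(AC_\varphi+2M+C\,\mathrm{Lip}_g\bigr)<-1
\]
once $\kappa$ is large and then $A$ is large. The gradient satisfies $|Dw^+|\ge A\psi'-\mathrm{Lip}_g-2M\rho\ge A/4\ge 1$, because $\psi'\ge 1/2$ on $[0,\delta_0]$ after shrinking $\delta_0\le 1/(4\kappa)$. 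This is where the degeneracy enters: by (A3), $\Phi(|Dw^+|,y)\ge L^{-1}\nu_0|Dw^+|^{i(\Phi)}$ for $|Dw^+|\ge1$. The same bound holds in the singular case $i(\Phi)<0$, using the almost-monotonicity with $s(\Phi)$ instead.

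Therefore
\[
\Phi F(D^2w^+)+H\le -c A^{\,i(\Phi)}\cdot A\kappa + \mathcal M_1+\mathcal M_2 (2A)^m - f.
\]
For this to be negative we need $m\le 1+i(\Phi)$, i.e.\ (A4): the $A^{1+i(\Phi)}$ term dominates $A^m$ when $m<1+i(\Phi)$. In the borderline case $m=1+i(\Phi)$ it dominates because $\kappa$ is large compared with $\mathcal M_2$, or because $\mathcal M_2\le\epsilon_0$. Thus $w^+$ is a strict classical supersolution. The comparison principle then gives $u\le w^+$, and evaluating along the normal through $z$ (where $|y-z|\approx d$) yields $u(y)-g(z')\le C d(y)$. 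Since $|g(y')-g(z')|\le C d$, this gives the desired bound. The lower bound uses the symmetric subsolution $w^-$.

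I expect the main obstacle to be the Hamiltonian balance combined with applying the comparison principle to a degenerate/singular operator with gradient-dependent $H$. One needs strictness and $|Dw^\pm|$ bounded away from $0$, so that the non-uniformly-elliptic part never bites. I would handle this exactly as above, forcing $|Dw^\pm|\ge A/4$ and invoking Proposition~\ref{Section3:prop1} for a strict supersolution against a viscosity subsolution.
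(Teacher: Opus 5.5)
The gap is in how you treat the boundary datum. The term $\mathrm{Lip}_g|y'-z'|$ in $w^+$ has a convex ridge along the whole segment $\{y'=z'\}$ inside $B_\rho(z)\cap\{y_n>\varphi(y')\}$.

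\begin{itemize}
\item Off the ridge, its Hessian is positive semidefinite and blows up like $\mathrm{Lip}_g|y'-z'|^{-1}$.
\item At ridge points, $w^+$ can be touched from below by $C^2$ functions whose Hessian contains $K\,\mathrm{Lip}_g$ times the identity in the $y'$-variables, with $K$ arbitrarily large.
\end{itemize}

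So the expansion $D^2w^+=A\psi''\,Dd\otimes Dd+A\psi'D^2d+O(M+\mathrm{Lip}_g)$ is false. As a result, $w^+$ is not a supersolution, and it is not $C^2$ as Proposition~\ref{Section3:prop1} requires.

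Your ``cleaner version'' does not repair this. On the graph $d=0$, while $g(y')-g(z')$ is of first order in $|y-z|$. Even after subtracting the tangent plane it is still of order $|y-z|^{1+\beta_g}$, and no multiple of $|y-z|^2$ dominates that near $z$.

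This is not a technicality: no argument that only uses $\mathrm{Lip}_g$ can work, because the estimate fails for Lipschitz data. In $\{x_2>0\}\subset\mathbb{R}^2$, consider the harmonic function $u=x_1-\frac{2}{\pi}\bigl(x_2\ln|x|+x_1\arg(x_1+ix_2)\bigr)$.
\begin{itemize}
\item It solves $|Du|^p\Delta u=0$ and is bounded in $B_1\cap\{x_2>0\}$.
\item It equals $|x_1|$ on $\{x_2=0\}$.
\item Yet $u(0,x_2)=\frac{2}{\pi}x_2\ln(1/x_2)$, which is not $O(x_2)$.
\end{itemize}
So the hypothesis $g\in C^{1,\beta_g}$ must enter the proof, even though the lemma lists $\mathrm{Lip}_g$ among the dependencies.

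The paper does this as follows. It lets $\widetilde g$ solve a Pucci problem with boundary values $g$, so $\widetilde g\in C^{1,\beta}$ up to the boundary with bounded gradient. It then uses $\omega=\widetilde g+M\bigl(1-(1+d)^{-k}\bigr)$ plus a cubic term outside $B_r$. With this choice:
\begin{itemize}
\item $\omega\ge g$ on the graph is automatic;
\item $\mathscr P^+_{\lambda,\Lambda}(D^2\omega)\le\mathscr P^+_{\lambda,\Lambda}(D^2\widetilde g)+M\mathscr P^+_{\lambda,\Lambda}(D^2\omega_0)$;
\item $|D\omega|\gtrsim M$.
\end{itemize}

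Within your local scheme, you could instead use a regularized extension $G$ of $g$, obtained by mollifying at a scale comparable to $d(y)$. It satisfies $|DG|\le C$ and $|D^2G|\le C\,d^{\beta_g-1}$. Pair it with the profile $\psi(t)=t-\kappa t^{1+\beta_g}$, whose $\psi''\approx-\kappa\,d^{\beta_g-1}$ absorbs $D^2G$.

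The rest of your plan is sound and matches the paper's structure: keep $|Dw^\pm|$ comparable to $A$, use $m\le 1+i(\Phi)$, take $\kappa$ large relative to $\mathcal M_2$ in the borderline case, and then apply comparison.
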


Before proving Lemma~\ref{Section3:lemma2}, we need the following a comparison principle:

\begin{proposition}[{Comparison principle I}]
\label{Section3:prop1}
Let $ \Omega $ be a bounded $ C^{2} $ domain in $ \mathbb{R}^{n} $. Suppose that $ f $ and $ g $ are continuous in $ \overline{\Omega}$, and \( H : \mathbb{R}^n \times \Omega \to \mathbb{R} \) is continuous. Assume that $ \omega \in C^{0}(\overline{\Omega}) \cap C^{2}(\Omega)$ is a viscosity super-solution of
\begin{equation*}
  \Phi(|D\omega|,x) F(D^{2}\omega,x)+ H(|D\omega|, x) = f  \quad \text{in} \quad \Omega,
\end{equation*}
and $ u \in C^{0}(\overline{\Omega})$ is a viscosity sub-solution of
\begin{equation*}
  \Phi(|Du|,x) F(D^{2}u,x)+ H(|Du|, x) = g  \quad \text{in} \quad \Omega.
\end{equation*}
Suppose that $ f < g $ in $ \overline{\Omega}$. If $ u \leq v$ on $ \partial \Omega$, then $ u \leq v$ in $ \Omega $.
\end{proposition}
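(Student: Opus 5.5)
We argue by contradiction, exploiting that the super-solution $\omega$ (denoted $v$ in the conclusion) is of class $C^{2}$ in $\Omega$, so it can serve directly as a test function for $u$; no doubling of variables is needed. Suppose $\max_{\overline{\Omega}}(u-\omega)=:\delta>0$. Since $u\le \omega$ on $\partial\Omega$ and both functions are continuous on $\overline{\Omega}$, the maximum is attained at some interior point $x_0\in\Omega$. Then $u-\omega$ has a local maximum at $x_0$, i.e. $\omega+\delta$ touches $u$ from above at $x_0$.

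\textbf{Case 1: $D\omega(x_0)\neq 0$.} Taking $\varphi=\omega+\delta$ in the sub-solution part of Definition~\ref{subsection 2.1} gives
\[
\Phi(|D\omega(x_0)|,x_0)F(D^2\omega(x_0),x_0)+H(|D\omega(x_0)|,x_0)\ge g(x_0).
\]
On the other hand, since $\omega\in C^2$ is a viscosity super-solution, testing with $\omega$ itself at $x_0$ (where $D\omega(x_0)\ne0$) yields the classical inequality
\[
\Phi(|D\omega(x_0)|,x_0)F(D^2\omega(x_0),x_0)+H(|D\omega(x_0)|,x_0)\le f(x_0).
\]
Combining the two inequalities gives $g(x_0)\le f(x_0)$, which contradicts $f<g$.

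\textbf{Case 2: $D\omega(x_0)=0$.} This is the main obstacle, because the definition imposes no condition for test functions with vanishing gradient. I would first perturb: replace $\omega$ by $\omega_\eta(x)=\omega(x)+\eta\, e\cdot x$ for a small vector $\eta e$. Because $f<g$ on the compact set $\overline{\Omega}$, there is a gap $\kappa:=\min_{\overline\Omega}(g-f)>0$. Continuity of $\Phi$ and $H$ in the gradient variable, together with $\omega\in C^2$, gives $\omega_\eta$ as a classical super-solution with right-hand side $f+\kappa/2$ near the relevant points, at least on compact subsets where $D^2\omega$ is bounded. The maximum point $x_\eta$ of $u-\omega_\eta$ stays interior for small $\eta$ because $\delta>0$, and $D\omega_\eta(x_\eta)=D\omega(x_\eta)+\eta e$. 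Choosing $e$ generically avoids $D\omega_\eta(x_\eta)=0$; if this choice fails, one can instead use a quadratic perturbation $\omega+\eta|x-x_0|^{2}$ combined with the structure of the definition.

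If $D\omega_\eta$ still vanishes on a set, the alternative branch of the definition is used. Either $u$ is constant near the contact point with $0\ge g$ there, or, dually, $\omega$ is locally constant with $0\le f$. In the constant-$u$ branch, $\omega$ touches a constant from below, so $D\omega=0$ and $D^2\omega\ge0$. Ellipticity then gives $F(D^2\omega)\ge F(0)=0$, and the super-solution inequality for $\omega$ (classical, by passing to the limit from nearby points where $D\omega\ne0$, or directly if $\Phi(0,\cdot)=0$) gives $H(0,x_0)\le f(x_0)$. Meanwhile the sub-solution branch gives $g\le 0$ locally. These must be reconciled with $f<g$; the delicate point is whether $H(0,x_0)$ can be controlled. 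I would therefore set up the perturbation argument so that the contradiction $g(x_\eta)\le f(x_\eta)+\kappa/2$ comes from Case 1 applied at the perturbed point, and then let $\eta\to0$ to obtain $g(x_0)\le f(x_0)+\kappa/2<g(x_0)$.

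Conclude $\delta\le0$, i.e.\ $u\le\omega$ in $\Omega$.
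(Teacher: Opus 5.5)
Your Case~1 is correct and is the core of the standard argument the paper defers to: at a contact point with $D\omega(x_0)\neq0$ you test $u$ with $\omega+\delta$, and a $C^2$ viscosity super-solution satisfies its inequality classically wherever its gradient does not vanish. Add one line: $u$ cannot be locally constant near $x_0$, since then $\omega$ would have a local minimum at $x_0$ and $D\omega(x_0)=0$. So it is the test-function alternative of the definition that applies to $u$. This case is also all the paper needs, because the barrier in Lemma~\ref{Section3:lemma2} has $|\nabla\omega|\ge Mk/(2(1+D_{\Omega_r})^{1+k})>0$ by \eqref{Section3:eq6}.

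Case~2, however, is a genuine gap. Your claim that a generic tilt $\omega+\eta e\cdot x$ (or $\omega+\eta|x-x_0|^2$) produces a contact point with nonzero gradient fails exactly when $u$ is constant near the maximum set. There, every maximizer $x_\eta$ of $u-\omega_\eta$ is a local minimizer of $\omega_\eta$, so $D\omega_\eta(x_\eta)=0$ for every $\eta$ and $e$, and you never return to Case~1. Moreover, asserting that $\omega_\eta$ is a classical super-solution with right-hand side $f+\kappa/2$ presupposes the inequality for $\omega$ at points where $D\omega=0$. The definition gives nothing there: any $C^2$ function touching the $C^1$ function $\omega$ at such a point has zero gradient, so the test-function alternative is vacuous. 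You can recover it by continuity from $\{D\omega\neq0\}$ only if $\Phi(\cdot,x)$ is continuous at $t=0$, which fails when $i(\Phi)<0$. On the interior of $\{D\omega=0\}$ it is available only through the constant alternative. When $u$ is not locally constant, the tool you need is not a tilt but the extension of the sub-solution inequality to zero-gradient tests. This is obtained by producing nearby contact points with nonzero gradient and passing to the limit: double variables, and if that degenerates, touch $u$ from above by $C|x-z|^{q}$, $q>2$, at points $x\neq z$. Such points exist precisely because $u$ is not locally constant.

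Your worry about $H(0,x_0)$ is well founded, and with the constant alternatives as written in the definition ($0\ge g$ for $u$, $0\le f$ for $\omega$) it cannot be resolved. You only get $H(0,x_0)\le f(x_0)<g(x_0)\le 0$, which is consistent, and the proposition as stated is then false. Here is a counterexample:
\begin{itemize}
\item Take $\Omega=B_r$, $\Phi(t,x)=t^2$, $F(\mathrm{M},x)=\mathrm{tr}\,\mathrm{M}$ and $H\equiv-1$, which is admissible in \textbf{(A4)}.
\item Take $\omega(x)=|x|^4$, $f(x)=64(n+2)|x|^8-1$ and $g\equiv-\tfrac12$.
\item Take $u(x)=U(|x|)$ with $U\equiv\epsilon$ on $[0,\rho]$ and $U(s)=\epsilon+\tfrac{3^{4/3}}{4}(s-\rho)^{4/3}$ for $s>\rho$.
\end{itemize}
Then $|D\omega|^2\Delta\omega-1=f$ classically, and no test function with nonzero gradient touches $\omega$ at $0$. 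Since $U'^2U''=1$, $u$ is a classical sub-solution on $\{|x|>\rho\}$. On $\{|x|=\rho\}$ no admissible test exists, because $u\in C^1$ with $Du=0$. On $\{|x|<\rho\}$ the constant alternative only asks $0\ge-\tfrac12$. For $r$ small, $\epsilon=r^4/2$ and $\rho$ close to $r$, one has $f<g$ on $\overline{B_r}$ and $u\le\omega$ on $\partial B_r$, yet $u(0)=r^4/2>0=\omega(0)$.

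So Case~2 can only be completed after changing the hypotheses, in one of two ways:
\begin{itemize}
\item Assume $D\omega\neq0$ in $\Omega$, as in the paper's application; then your Case~1 is the whole proof.
\item At every point near which the function is constant, require $H(0,x)\ge g(x)$ for $u$ and $H(0,x)\le f(x)$ for $\omega$, i.e.\ $G(x,0,0)\ge0$, resp.\ $\le0$. In the regime where $\Phi(\cdot,x)$ is continuous at $t=0$, the constant subcase then closes as $g(x_0)\le H(0,x_0)\le f(x_0)$. The non-constant subcase follows from the classical inequality for $\omega$ at zero-gradient points together with the zero-gradient extension for $u$ described above.
\end{itemize}
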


For the sake of brevity, we omit the proof of this result, which is well-known to specialists; we refer the interested reader to \cite[Proposition 4.1]{BD16} or \cite[Lemma 4.2]{BBLL24b} for further details.

We are now ready to prove Lemma~\ref{Section3:lemma2}.
\begin{proof}[{Proof of Lemma~\ref{Section3:lemma2}}]
Let $ r < 1 $, and $ \Omega_{r} $ be a bounded $ C^{2} $-domain which contain $ \{ y_{n} > \varphi(y')\} \cap B_{\frac{1+r}{2}} $ and is contained in $ B_{1} \cap \{y_{n} > \varphi(y')\} $. Our main goal is to show that there exists a positive constant $ \mathrm{C}_{\varphi, B_{r}} $ such that
\begin{equation}\label{Section3:eq2}
  |u(y', y_{n})- g(y')| \leq  \mathrm{C}_{\varphi, B_{r}} d(y, \partial \Omega),   \quad   \forall y \in \Omega_{r}.
\end{equation}

Recall that the distance function $ d $ is of class $ C^{2} $ in a neighborhood of the boundary and semi-concave elsewhere. Consequently, there exists a constant $ \mathrm{C}_{1} $ such that $ D^{2}d \leq \mathrm{C}_{1} $. For simplicity, we denote by $ D_{\Omega_{r}} $ the diameter of $ \Omega_{r} $. Let $ \widetilde{g} $ be a viscosity solution to
\begin{equation*}
\left\{
     \begin{alignedat}{2}
           \mathscr{P}^{+}_{\lambda,\Lambda}(D^{2}\widetilde{g})& = f         \quad     &&   \text{in} \ \ B_{1} \cap \{y_{n}> \varphi(y')\}    ,    \\
           \widetilde{g}  &  = g    \quad  &&   \text{on} \ \  B_{1} \cap \{y_{n} = \varphi(y')\}.        \\
     \end{alignedat}
     \right.
\end{equation*}
It is well known that $ \widetilde{g} \in C^{1,\beta_{\widetilde{g}}}(\{y_{n} \geq \varphi(y')\}) \cap C^{2}(\{y_{n} > \varphi(y')\})   $, foe some $ 0 < \beta_{\widetilde{g}}<1 $, see \cite{CC95}. Moreover, one has $ ||\widetilde{g}||_{\infty} \leq ||g||_{\infty} \leq 1   $ and $ ||\nabla \widetilde{g}||_{\infty} \leq \mathrm{C}||g||_{\infty}  $, for some constant $ \mathrm{C} $ depending on $ \lambda, \Lambda, n, \Omega $.

Let $ k $ be so that $ n \Lambda \mathrm{C}_{1}(1+ D_{\Omega_{r}}) < \frac{(k+1)\lambda}{2}$. We also define
\begin{equation}\label{Section3:eq3}
  \mathrm{c}_{1}:=\frac{L\nu_{0}\lambda k^{1+i(\Phi)}(k+1)}{2^{2+i(\Phi)(1+D_{\Omega_{r}})^{k+2+(1+k)i(\Phi)}}}  \quad    \text{and}  \quad  \mathrm{c}_{2}:= k \bigg[1+\frac{1}{2(1+D_{\Omega_{r}})^{1+k}}   \bigg]
\end{equation}

and select $ M $ large enough such that
\begin{equation}\label{Section3:eq4}
  Mk > 2\bigg(1+||\nabla \widetilde{g}||_{\infty}+\frac{2^{7}}{1-r}\bigg)(1+D_{\Omega_{r}})^{1+k}
\end{equation}
and
\begin{equation}\label{Section3:eq5}
  M^{1+i(\Phi)}\mathrm{c}_{1} - \mathcal{M}_{1} - \mathcal{M}_{2}\mathrm{c}_{2}^{m} M^{m}  \geq ||f||_{\infty}
\end{equation}

The proof of \eqref{Section3:eq2} relies on the construction of upper and lower barriers. For this purpose, let $ \omega \in C^{2}(\Omega_{r}) $ be defined by
\begin{equation*}
\omega(y)=\left\{
     \begin{alignedat}{2}
                   & M \omega_{0}(y) + \widetilde{g}(y)       \quad     &&   \text{for} \ \  |y| \leq r   ,    \\
           &   M \omega_{0}(y) + \frac{2^{5}}{(1-r)^{3}}(|y|-r)^{3} + \widetilde{g}(y)      \quad  &&   \text{for}   \ \  |y|>r,         \\
     \end{alignedat}
     \right.
\end{equation*}
where  $ \omega_{0}(y):=  \big[1- \frac{1}{(1+d(y))^{k}}     \big] $. A routine calculation reveals that
\begin{equation*}
\nabla \omega(y)=\left\{
     \begin{alignedat}{2}
                   & M \nabla \omega_{0}(y) + \nabla \widetilde{g}(y)       \quad     &&   \text{for} \ \  |y| \leq r   ,    \\
           &   M \nabla \omega_{0}(y) +  \frac{3\times2^{5}}{(1-r)^{3}}(|y|-r)^{3}\frac{y}{|y|}+ \nabla \widetilde{g}(y)      \quad  &&   \text{for}   \ \  |y|>r,         \\
     \end{alignedat}
     \right.
\end{equation*}
where $ \nabla \omega_{0} = \frac{k \nabla d }{(1+d)^{k}}  $. Noticing that the properties of distance function $ d $, we see that
\begin{equation}\label{Section3:eq6}
  |\nabla\omega|  \geq \frac{Mk}{2(1+D_{\Omega_{r}})^{1+k}}  \quad   \text{and}  \quad  |\nabla\omega|  \leq  \bigg[1+\frac{1}{2(1+D_{\Omega_{r}})^{1+k}}   \bigg]Mk  \quad  \text{in}  \quad \Omega_{r}.
\end{equation}

Furthermore, it is easy to see that
\begin{equation*}
  \nabla^{2} \omega_{0}(y) = \frac{k\nabla^{2}d}{(1+d)^{1+k}} - k(k+1) \frac{\nabla d \otimes \nabla d}{(1+d)^{k+2}}.
\end{equation*}
By the properties of $ \mathscr{P}^{+}_{\lambda,\Lambda} $ and $ ||\nabla^{2} \widetilde{g}||_{\infty} \leq \mathrm{C} $, it reads
\begin{align}\label{Section3:eq7}
\begin{split}
  \mathscr{P}^{+}_{\lambda,\Lambda}(D^{2}\omega) \leq M  \mathscr{P}^{+}_{\lambda,\Lambda}(D^{2}\omega_{0}) + \mathrm{C} & \leq -\frac{\lambda M k(k+1)}{2(1+D_{\Omega_{r}})^{k+2}}  + \mathrm{C}  \\
  & \leq -\frac{\lambda M k(k+1)}{4(1+D_{\Omega_{r}})^{k+2}}
\end{split}
\end{align}
provided $ M $ is large.

Now we combine \hyperref[A1]{\bf (A1)}, \hyperref[A3]{\bf (A3)}, \hyperref[A4]{\bf (A4)} and \eqref{Section3:eq3}--\eqref{Section3:eq7}, we obtain
\begin{align*}
\begin{split}
& \Phi(|D\omega|,x)  F(D^{2}\omega,x)+ H(|D\omega|, x)  \leq \Phi(|D\omega|,x) \mathscr{P}^{+}_{\lambda,\Lambda}(D^{2}\omega) + H(|D\omega|, x)    \\
& \leq -L \nu_{0} |D \omega|^{i(\Phi)} \frac{k(k+1)\lambda M}{4(1+D_{\Omega_{r}}){k+2}}  + \mathcal{M}_{1}  +  \mathcal{M}_{2} |D \omega|^{m}   \\
& \leq -M^{1+i(\Phi)} \frac{L\nu_{0}\lambda k^{1+i(\Phi)}(k+1)}{2^{2+i(\Phi)(1+D_{\Omega_{r}})^{k+2+(1+k)i(\Phi)}}} + \mathcal{M}_{1}  +  \mathcal{M}_{2} M^{m} k^{m} \bigg[1+\frac{1}{2(1+D_{\Omega_{r}})^{1+k}}   \bigg]^{m}  \\
& =  -M^{1+i(\Phi)}\mathrm{c}_{1}  + \mathcal{M}_{1} + \mathcal{M}_{2}\mathrm{c}_{2}^{m} M^{m}  \\
&  \leq -||f||_{\infty}.
\end{split}
\end{align*}

Next we will verify $ \omega \geq u $ on $ \partial \Omega_{r} $. Observed that on $ \partial \Omega_{r} \cap   \{y_{n} = \varphi(y')\} $, we readily see $ \omega \geq \widetilde{g} = u $. While on $ \Omega_{r} \cap B_{1} $, we have $ \omega \geq 4+\widetilde{g}(y) \geq 1 \geq u $ since $ \frac{1+r}{2} \leq |y| < 1 $. Finally, by the comparison principle (Proposition \ref{Section3:prop1}), we obtain $ \omega \geq u $ in $ \Omega_{r} $. That is to say,
\begin{equation*}
  u(y) - g(y') \leq  M \omega_{0}(y) + \widetilde{g}(y') - g(y') \leq \mathrm{C}_{0} d(y, \partial \Omega_{r}).
\end{equation*}
This proves a upper bound for $ u- g $. Similarly, we can show a lower bound for $ u- g $. Hence we finish the proof of Lemma \ref{Section3:lemma2}.
\end{proof}

With Lemma~\ref{Section3:lemma2} in hand combined with Crandall-Ishii-Lions Lemma (\cite[Theorem 3.2]{CIL92}), we initiate the analysis for case $ 0 \leq i(\Phi) < m \leq 1+i(\Phi) $. In this setting, we impose an extra condition \eqref{Section3:eq8} to control the growth of the Hamiltonian term $ \mathcal{M}_{2}|Du+\xi|^{m-i(\Phi)} $.

\begin{proposition}[{Boundary H\"{o}lder regularity: degenerate case}]\label{Section3:Prop3}
Let $ g$ be a Lipschitz continuous function. Suppose that \hyperref[A1]{\bf (A1)}--\hyperref[A6]{\bf (A6)} are in force with $ 0 \leq i(\Phi) < m \leq 1+i(\Phi) $ and $ \nu_{0} = \nu_{1} =1 $. Let $ \xi \in \mathbb{R}^{n} $ and $ u $ be a viscosity solution to \eqref{Section3:eq1} with $ ||u||_{L^{\infty}(B_{1}\cap \{y_{n}>\varphi(y')\})} \leq 1 $. There exists a universal constant $ \mathrm{\kappa}_{0} > 0 $ such that if
\begin{equation}\label{Section3:eq8}
  \mathcal{M}_{2} \big(1+|\xi|^{m-i(\Phi)}\big) \leq  \kappa
\end{equation}
for some $ 0< \kappa \leq \kappa_{0} $. Then $ u \in C^{0,\gamma}(B_{r}\cap \{y_{n}>\varphi(y')\})$ and
\begin{equation*}
  ||u||_{C^{0,\gamma}(B_{r}\cap \{y_{n}>\varphi(y')\})} \leq \mathrm{C},
\end{equation*}
where $ \mathrm{C} > 0 $ is a constant depending on $ n, \lambda, \Lambda, L, m, i(\Phi), \mathrm{C}_{F}, \theta, \kappa_{0}, \mathcal{M}_{1}, r, ||f||_{L^{\infty}(\Omega)} $ and $ \mathrm{Lip}_{g}(\partial \Omega) $.
\end{proposition}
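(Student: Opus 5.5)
The proof splits the estimate into a boundary part and an interior part, and glues them. The boundary part comes from a barrier comparison as in Lemma~\ref{Section3:lemma2}, adapted to the shifted equation \eqref{Section3:eq1}. The interior part comes from the Ishii--Lions doubling of variables (Crandall--Ishii--Lions lemma, \cite[Theorem 3.2]{CIL92}), localized in $B_r\cap\{y_n>\varphi(y')\}$.

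\textbf{Boundary control.} I will redo the barrier construction of Lemma~\ref{Section3:lemma2} for the operator with gradient $D\omega+\xi$. I keep the function
\[
\omega=M\omega_0+\text{cutoff}+\widetilde g ,\qquad \omega_0=1-(1+d)^{-k}.
\]
Since $D\omega_0$ points along $\nabla d$, I will choose $M$ large relative to $|\xi|$ when $|\xi|$ is small. When $|\xi|$ is large, I use instead that $\Phi(|D\omega+\xi|)\gtrsim |\xi|^{i(\Phi)}$ whenever $|D\omega|\le |\xi|/2$.

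Either way, $\mathscr P^+(D^2\omega)\le -cM$ gives a negative term at least of order $M\,|D\omega+\xi|^{i(\Phi)}$. This term must dominate $\mathcal M_1+\mathcal M_2|D\omega+\xi|^m+\|f\|_\infty$. Since $m-i(\Phi)\le1$, the Hamiltonian contribution is at most
\[
\mathcal M_2\,|D\omega+\xi|^{i(\Phi)}\bigl(|D\omega|^{m-i(\Phi)}+|\xi|^{m-i(\Phi)}\bigr),
\]
and \eqref{Section3:eq8} makes it $\le \kappa\,|D\omega+\xi|^{i(\Phi)}(1+M)$. This is absorbed by $cM$ once $\kappa_0$ is small. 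The outcome is
\[
|u(y)-g(y')|\le C\,d(y)\quad\text{in } B_{(1+r)/2}\cap\{y_n>\varphi\},
\]
and hence $|u(x)-u(y)|\le C|x-y|^\gamma$ whenever one of the two points lies on the boundary. I will also use the Lipschitz regularity of $g$ here.

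\textbf{Interior Ishii--Lions.} For $x_0$ in the domain, I consider
\[
\sup_{x,y}\; u(x)-u(y)-L_1\phi(|x-y|)-\tfrac{L_2}{2}|x-x_0|^2-\tfrac{L_2}{2}|y-x_0|^2,
\]
with $\phi(t)=t^\gamma$ and $L_2$ large, and I argue by contradiction assuming the supremum is positive. The boundary estimate lets me assume the maximum points $\bar x,\bar y$ stay interior, at distance comparable to $|\bar x-\bar y|$ from the boundary or farther. The CIL lemma then produces jets $(a+L_2(\bar x-x_0),X)$ and $(a-L_2(\bar y-x_0),Y)$ with $a=L_1\phi'(|\bar x-\bar y|)\,e$ large. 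The matrix inequality gives
\[
\mathscr P^-(X-Y)\ge c\,L_1|\bar x-\bar y|^{\gamma-2},
\]
up to an $O(L_2)$ error and the $\mathrm{osc}_F$ term, which is controlled by \hyperref[A2]{\bf (A2)} together with $|\bar x-\bar y|^\theta\|X\|$.

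Write $\eta_1,\eta_2$ for the shifted gradients $a\pm L_2(\cdot)+\xi$.

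In the case $|\eta_i|\ge |a|/4$, I divide both viscosity inequalities by $\Phi(|\eta_i|)$. Because $i(\Phi)\ge0$ and $|a|\ge1$, we get $\Phi\gtrsim |a|^{i(\Phi)}$, so the $f$ and $\mathcal M_1$ terms become $O(1)$. The term $H/\Phi$ is at most $\mathcal M_2|\eta|^{m-i(\Phi)}\lesssim \kappa(|a|+1)$, using \eqref{Section3:eq8} and $m-i(\Phi)\le1$. The main negative term is of order $L_1\delta^{\gamma-2}$ with $\delta=|\bar x-\bar y|$, while $\kappa|a|\sim\kappa L_1\delta^{\gamma-1}$. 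Since $\delta\le1$, the former dominates, giving a contradiction for $L_1$ large.

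In the case $|\eta_i|<|a|/4$, we have $|\xi|\sim|a|$ large. Then the roles are reversed, and I lower-bound $\Phi$ by $|\xi|^{i(\Phi)}$ instead, with the same absorption argument.

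\textbf{Main obstacle.} I expect the main difficulty to be the uniformity in $\xi$ in the second case. The Hamiltonian term scales like $|\xi|^{m-i(\Phi)}$, and only the smallness assumption \eqref{Section3:eq8} keeps it below the ellipticity gain of order $L_1\delta^{\gamma-2}$. I will try first to handle it by splitting according to whether $|\xi|\le A L_1\delta^{\gamma-1}$, and in each regime bound
\[
\mathcal M_2|\eta|^{m-i(\Phi)}\le \kappa\max\bigl\{1,\,|a|^{m-i(\Phi)}\bigr\}.
\]
Choosing $\kappa_0$ small, depending only on the universal constants, should then close the argument.
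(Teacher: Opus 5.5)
Your boundary barrier (split into $|\xi|$ small, with $M$ large so that $|D\omega|\ge 2|\xi|$, and $|\xi|$ large, with $|D\omega|\le|\xi|/2$) is sound. So is your first interior case $|\eta_i|\ge|a|/4$. The second interior case, however, has a genuine gap.

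There the viscosity inequalities involve $\Phi(|\eta_i|,\cdot)$, not $\Phi(|\xi|,\cdot)$. The conditions $|\eta_i|<|a|/4$ and $|\xi|\sim|a|$ give no lower bound on $|\eta_i|$: nothing excludes $\xi\approx-a$, so $\eta_i$ may be arbitrarily small or zero. Since $i(\Phi)\ge0$, you only have $\Phi(t,\cdot)\le Lt^{i(\Phi)}$ for $t\le1$. So dividing by $\Phi(|\eta_i|,\cdot)$ makes $f/\Phi$ and $\mathcal M_1/\Phi$ unbounded. At $\eta_i=0$ the inequality $\Phi(0,\bar x)F(X,\bar x)+H(0,\bar x)\ge f(\bar x)$ says nothing about $X$.

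The bound $\Phi\gtrsim|\xi|^{i(\Phi)}$ worked for the barrier only because $|D\omega|\le|\xi|/2$ there. For the jets there is no analogue: with the profile $t^\gamma$, the test gradient $|a|\sim\gamma L_1\delta^{\gamma-1}$ is unbounded as $\delta\to0$ and can match any $|\xi|$. Your ``main obstacle'' splitting only re-estimates $\mathcal M_2|\eta|^{m-i(\Phi)}$, which is not where the argument breaks. (Also, the matrix inequality yields $\mathscr{P}^{+}_{\lambda,\Lambda}(X-Y)\le-4\lambda\gamma(1-\gamma)L_1\delta^{\gamma-2}+CL_2$; your $\mathscr{P}^{-}_{\lambda,\Lambda}(X-Y)\ge cL_1\delta^{\gamma-2}$ has the sign reversed.)

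The paper never meets this case. Since $m>i(\Phi)$, \eqref{Section3:eq8} gives $|\xi|\le\widetilde{\mathcal M_0}:=(\kappa_0/\mathcal M_2)^{1/(m-i(\Phi))}$ with $\widetilde{\mathcal M_0}\ge1$. Choosing $L_1>4\widetilde{\mathcal M_0}/\gamma$ then forces $1\le\widetilde{\mathcal M_0}\delta^{\gamma-1}\le|\eta_i|\le3\gamma L_1\delta^{\gamma-1}$, so only your first case occurs. The price is that $L_1$, and hence the H\"older constant, depends on $\mathcal M_2$ through $\widetilde{\mathcal M_0}$; the statement does not list this dependence.

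To get the genuinely $\xi$-uniform estimate you were aiming for, use the large/small-slope dichotomy of Propositions~\ref{Section3:Thm5} and~\ref{Section3:Thm7}:
\begin{enumerate}
\item First fix $L_1^\ast$ for an Ishii--Lions argument with the Lipschitz profile $\omega(s)=s-\omega_0s^{1+\beta}$, whose test gradients satisfy $|a|<2L_1^\ast$.
\item If $|\xi|>3L_1^\ast$, then $L_1^\ast\le|\eta_i|\le2|\xi|$. Hence $f/\Phi\le L\|f\|_\infty$, and $H/\Phi\le L\mathcal M_1+2L\kappa$ by \eqref{Section3:eq8}. A contradiction follows for $L_1^\ast$ large.
\item If $|\xi|\le3L_1^\ast$, run your $t^\gamma$ argument with $L_1>12L_1^\ast/\gamma$, so again $|\eta_i|\ge1$.
\end{enumerate}
Your $\xi$-uniform barrier supplies the boundary input in both regimes. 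It is more careful than the paper's direct appeal to Lemma~\ref{Section3:lemma2}, which is stated only for $\xi=0$.
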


\begin{proof}
Let $ r_{1} \in (r,\frac{1}{2}) $ be fixed. For $ x_{0} \in B_{r} \cap \{y_{n} > \varphi(y')\} $. We consider an auxiliary function
\begin{equation}\label{Section3:eq9}
  \Gamma(x,y):=u(x) - u(y) - L_{1}|x-y|^{\gamma} - L_{2}(|x-x_{0}|^{2} + |y-x_{0}|^{2}),
\end{equation}
where $ \gamma \in (0,1) $ and $ L_{1}, L_{2} >1 $. We claim that for $ L_{1}, L_{2} \gg 1 $ large enough,
\begin{equation}\label{Section3:eq10}
  \Gamma(x,y) \leq 0  \quad  \text{in}  \quad (B_{r_{1}} \cap \{y_{n} > \varphi(y') \})^{2}.
\end{equation}
Once this is established, it implies $ u \in C^{0,\gamma}(B_{r}\cap \{y_{n}>\varphi(y')\}) $.

Firstly, assume that $ y \in B_{r_{1}} \cap \{y_{n}>\varphi(y')\} $, then by using Lemma \ref{Section3:lemma2}, there exists a constant $ \widetilde{\mathrm{C}_{0}} $ such that
\[
|u(z) - g(z')| \leq  \widetilde{\mathrm{C}_{0}}   \mathrm{dist}(z, \partial\Omega) \quad \text{for } z \in B_{r_1} \cap \{y_n > \varphi(y')\},
\]
which implies that
\[
\begin{aligned}
|u(x) - u(y)| &\leq |u(x', x_n) - u(x', \varphi(x'))| + |u(x', \varphi(x')) - u(y', \varphi(y'))| \\
&\leq \widetilde{\mathrm{C}_{0}} \mathrm{dist}(x, \partial\Omega) + \mathrm{Lip}_g(\partial\Omega) |x' - y'| \leq (\widetilde{\mathrm{C}_{0}} + \mathrm{Lip}_g(\partial\Omega)) |x - y|.
\end{aligned}
\]
Therefore, recall $ \gamma \in (0,1) $, and if we choose \( \frac{L_{1}}{4} \geq \widetilde{\mathrm{C}_{0}} + \mathrm{Lip}_g(\partial\Omega) \), then
\[
\Gamma(x,y) \leq L_{1}\left( \frac{|x - y|}{4} - |x-y|^{\gamma} \right) - L_{2}\left( |x - x_0|^2 + |y - x_0|^2 \right) \leq 0.
\]

We now prove \eqref{Section3:eq10} by contradiction, suppose that there exists some point \((\hat{x}, \hat{y}) \in (B_{r_1} \cap \overline{\Omega})^{2}\) such that
\[
\Gamma(\hat{x}, \hat{y}) = \max_{(B_{r_1} \cap \overline{\Omega}) \times (B_{r_1} \cap \overline{\Omega})} \Gamma(x, y) > 0.
\]
Here, we also choose \( L_{2} > 32/(r_1 - r)^2\) and remember that $ ||u||_{L^{\infty}(B_{1}\cap \{y_{n}>\varphi(y')\})} \leq 1 $,   then we can easily check that
\begin{enumerate}[label=(\roman*)]

\item \label{1i} $ \hat{x} \neq \hat{y}$, \quad  \text{and}  \quad  $ |\hat{x}-\hat{y}|  \leq \frac{2}{\sqrt{L_{2}}} $;

\item  $ |\hat{x}-x_{0}|, |\hat{y}-x_{0}|  < \frac{r_{1}-r}{4} $   \quad   \text{and}  \quad  $ |\hat{x}-\hat{y}| \leq \frac{r_{1}-r}{2} <1$;

\item $ \hat{x}, \hat{y} \in B_{r_{1}} \cap \{y_{n}> \varphi(y')\} $.
\end{enumerate}

We are in a position to apply the Ishii-Lions lemma (\cite[Theorem 3.2]{CIL92}) to ensure the existence of a limiting subjet $(\xi_{\hat{x}}, \mathrm{X})$ of $u$ at $\hat{x}$ and a limiting superjet $(\xi_{\hat{y}}, \mathrm{Y})$ of $u$ at $\hat{y}$, such that the matrices $\mathrm{X}, \mathrm{Y} \in \text{Sym}(n) $ satisfy the matrix inequality
\[
\begin{pmatrix}\label{Section3:eq11}
\mathrm{X} & 0 \\
0 & -\mathrm{Y}
\end{pmatrix}
\leq
\begin{pmatrix}
\mathrm{A} & -\mathrm{A} \\
-\mathrm{A} & \mathrm{A}
\end{pmatrix}
+ (2L_2 + \epsilon)
\begin{pmatrix}
\textbf{I}\mathrm{d_{n}} & 0     \\
0 & \textbf{I}\mathrm{d_{n}}
\end{pmatrix}
\]
with $\epsilon \in (0,1)$, that only depends on $ ||\mathrm{A}|| $ and can be made sufficiently small. Here,
\begin{equation*}
\left\{
     \begin{aligned}
& \xi_{\hat{x}} := \gamma L_1 (\hat{x} - \hat{y}) |\hat{x} - \hat{y}|^{\gamma-2} + 2L_2 (\hat{x} - x_0);      \\
& \xi_{\hat{y}} := \gamma L_1 (\hat{x} - \hat{y}) |\hat{x} - \hat{y}|^{\gamma-2} - 2L_2 (\hat{y} - x_0) ;    \\
& \mathrm{A} := L_1 \gamma \left[ (\gamma - 2) |\hat{x} - \hat{y}|^{\gamma-4} \left( (\hat{x} - \hat{y}) \otimes (\hat{x} - \hat{y}) \right) + |\hat{x} - \hat{y}|^{\gamma-2} \textbf{I}\mathrm{d_{n}} \right].
\end{aligned}
     \right.
\end{equation*}

Furthermore, we have the following viscosity inequalities
\begin{equation}\label{Section3:eq12}
\left\{
     \begin{aligned}
& \Phi(|\xi_{\hat{x}} + \xi|, \hat{x}) F(\mathrm{X}, \hat{x}) + H(|\xi_{\hat{x}} + \xi|, \hat{x}) \geq f(\hat{x});      \\
& \Phi(|\xi_{\hat{y}} + \xi|, \hat{y}) F(\mathrm{Y}, \hat{y}) + H(|\xi_{\hat{y}} + \xi|, \hat{y}) \leq f(\hat{y}).
\end{aligned}
     \right.
\end{equation}

Using the properties of $ \hat{x} $ and $ \hat{y} $, it can be readily seen that
\begin{equation}\label{Section3:eq13}
  \frac{\gamma L_{1}}{2} |\widehat{x}-\widehat{y}|^{\gamma -1} \leq |\xi_{\hat{x}}|, |\xi_{\hat{y}}| \leq  2 \gamma L_{1} |\widehat{x}-\widehat{y}|^{\gamma -1}.
\end{equation}

We proceed to estimate the bounds of $ |\xi_{\hat{x}} + \xi| $ and $ |\xi_{\hat{y}} + \xi|$. Let $ \widetilde{\mathcal{M}_{0}}:=  \big( \frac{\kappa_{0}}{\mathcal{M}_{2}}   \big)^{\frac{1}{m-i(\Phi)}}$, then from \eqref{Section3:eq8} and $ m > i(\Phi) $, it reads
\begin{equation}\label{Section3:eq14}
  \widetilde{\mathcal{M}_{0}} \geq 1  \quad    \text{and}   \quad |\xi|\leq \widetilde{\mathcal{M}_{0}}.
\end{equation}
Choosing $ L_{1} > \frac{4\widetilde{\mathcal{M}_{0}}}{\gamma} $, then we combine \eqref{Section3:eq13}, \eqref{Section3:eq14} and $ |\hat{x}-\hat{y}|^{\gamma-1} > 1 $, we have
\begin{equation}\label{Section3:eq15}
  1 \leq \widetilde{\mathcal{M}_{0}}|\hat{x}-\hat{y}|^{\gamma-1} \leq |\xi_{\hat{x}}+\xi|, |\xi_{\hat{y}}+\xi| \leq 3 \gamma L_{1}|\hat{x}-\hat{y}|^{\gamma-1}.
\end{equation}

Next we can rewrite \eqref{Section3:eq12} as
\begin{equation}\label{Section3:eq16}
\left\{
     \begin{aligned}
&  F(\mathrm{X}, \hat{x}) + \frac{H(|\xi_{\hat{x}} + \xi|, \hat{x})}{\Phi(|\xi_{\hat{x}} + \xi|, \hat{x})} \geq \frac{f(\hat{x})}{\Phi(|\xi_{\hat{x}} + \xi|, \hat{x})};      \\
&  F(\mathrm{Y}, \hat{y}) + \frac{H(|\xi_{\hat{y}} + \xi|, \hat{y})}{\Phi(|\xi_{\hat{y}} + \xi|, \hat{y})} \leq \frac{f(\hat{y})}{\Phi(|\xi_{\hat{y}} + \xi|, \hat{y})}.
\end{aligned}
     \right.
\end{equation}
Subtracting the two inequality above \eqref{Section3:eq16}, and using \hyperref[A1]{\bf (A1)} and \hyperref[A2]{\bf (A2)}, we deduce
\begin{align}\label{Section3:eq17}
\begin{split}
  \frac{f(\hat{x})}{\Phi(|\xi_{\hat{x}} + \xi|, \hat{x})} & - \frac{H(|\xi_{\hat{x}} + \xi|, \hat{x})}{\Phi(|\xi_{\hat{x}} + \xi|, \hat{x})} + \frac{H(|\xi_{\hat{y}} + \xi|, \hat{y})}{\Phi(|\xi_{\hat{y}} + \xi|, \hat{y})} - \frac{f(\hat{y})}{\Phi(|\xi_{\hat{y}} + \xi|, \hat{y})}  \\
  & \leq F(\mathrm{X}, \hat{x}) - F(\mathrm{Y}, \hat{y}) \\
  & \leq \mathrm{C}_{F} |\hat{x}-\hat{y}|^{\theta} ||\mathrm{X}|| + \mathscr{P}^{+}_{\lambda,\Lambda}(\mathrm{X}-\mathrm{Y}).
  \end{split}
\end{align}

Note that the upper estimates for the norms $ ||\mathrm{X}|| $ and $ \mathscr{P}^{+}_{\lambda,\Lambda}(\mathrm{X}-\mathrm{Y}) $, and estimates for each term on the left side of \eqref{Section3:eq17} which can be found in \cite[Proposition 3.1]{HJMZ26a}, hence it leads to
\begin{align*}
    &- 2L(\|f\|_{\infty} + \mathcal{M}_{1}) \\
    \leq & \mathrm{C}_1 + L_1 |\hat{x} - \hat{y}|^{\gamma-2} \left( \mathrm{C}_F\gamma |\hat{x} - \hat{y}|^\theta - 4\lambda\gamma(1 - \gamma) + 6L\mathcal{M}_{2} |\hat{x} - \hat{y}| \right),
\end{align*}
where $\mathrm{C}_1 := (\Lambda(n - 1) + \lambda) (4L_2 + 2) + \mathrm{C}_F(2L_2 + 1)$. Choose
\[
L_2 \ge \max \left\{ 4 \left( \frac{\mathrm{C}_F}{\lambda(1 - \gamma)} \right)^{2/\theta}, \left( \frac{12L\mathcal{M}_{2}}{\lambda\gamma(1 - \gamma)} \right)^2 \right\}.
\]
By this choice, we apply \ref{1i} to derive
\begin{equation}\label{Section3:eq18}
\mathrm{C}_F |\hat{x} - \hat{y}|^\theta \le \mathrm{C}_F \left( \frac{2}{\sqrt{L_2}} \right)^\theta \le \lambda(1 - \gamma), \quad 6L\mathcal{M}_{2} |\hat{x} - \hat{y}| \le \frac{12L\mathcal{M}_{2}}{\sqrt{L_2}} \le \lambda\gamma(1 - \gamma).
\end{equation}
Utilizing \eqref{Section3:eq18}, $|\hat{x} - \hat{y}| < 1$ and $\gamma < 1$, we further conclude that
\begin{equation*}
  2L_1 \lambda\gamma(1-\gamma) < \mathrm{C}_{1} + 2L(||f||_{\infty}+\mathcal{M}_{1}),
\end{equation*}
which leads to a contradiction provided that $ L_{1} $ is large enough.

This finishes the proof of \eqref{Section3:eq10}.
\end{proof}

Following the argument for Proposition~\ref{Section3:Prop3}, we further establish the boundary $ C^{0,1} $ regularity for viscosity solution to \eqref{Main:eq1} under the condition $ -1 < i(\Phi) < 0 $.

\begin{proposition}[{Boundary Lipschitz regularity: singular case}]\label{Section3:Prop4}
Let $ g$ be a Lipschitz continuous function. Suppose that \hyperref[A1]{\bf (A1)}--\hyperref[A6]{\bf (A6)} are in force with $ -1 < i(\Phi) < 0 $. Let $ u $ be a viscosity solution to \eqref{Main:eq1} with $ ||u||_{L^{\infty}(B_{1}\cap \{y_{n}>\varphi(y')\})} \leq 1 $. Then $ u \in C^{0,1}(B_{r}\cap \{y_{n}>\varphi(y')\})$ and
\begin{equation*}
  ||u||_{C^{0,1}(B_{r}\cap \{y_{n}>\varphi(y')\})} \leq \mathrm{C},
\end{equation*}
where $ \mathrm{C} > 0 $ is a constant depending on $ n, \lambda, \Lambda, L, m, i(\Phi), \mathrm{C}_{F}, \theta, \kappa_{0}, \mathcal{M}_{1}, \mathcal{M}_{2}, r, ||f||_{L^{\infty}(\Omega)} $ and $ \mathrm{Lip}_{g}(\partial \Omega) $.
\end{proposition}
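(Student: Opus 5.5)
We follow the doubling-of-variables scheme of Proposition~\ref{Section3:Prop3}, now with a Lipschitz-type modulus. Fix $r_{1}\in(r,\frac12)$ and $x_{0}\in B_{r}\cap\{y_{n}>\varphi(y')\}$, and set
\begin{equation*}
  \Gamma(x,y):=u(x)-u(y)-L_{1}\omega(|x-y|)-L_{2}\big(|x-x_{0}|^{2}+|y-x_{0}|^{2}\big),
\end{equation*}
with the concave modulus $\omega(t)=t-\frac{1}{2}\cdot\frac{t^{3/2}}{3/2}$ (any $\omega(t)=t-\omega_{0}t^{1+\sigma}$ on $[0,1]$ works). This modulus satisfies $\omega'\in[\frac12,1]$ and $\omega''(t)=-\frac{\sigma(1+\sigma)}{2}t^{\sigma-1}<0$. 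We aim to show $\Gamma\le0$ on $(B_{r_{1}}\cap\{y_{n}>\varphi(y')\})^{2}$ for $L_{1},L_{2}$ large, which gives the Lipschitz bound.

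The boundary configuration is handled exactly as before. Lemma~\ref{Section3:lemma2} gives $|u(z)-g(z')|\le \widetilde{\mathrm C_{0}}\,d(z)$. Together with the Lipschitz continuity of $g$, this yields $|u(x)-u(y)|\le(\widetilde{\mathrm C_{0}}+\mathrm{Lip}_{g})|x-y|$ whenever one of the points lies on the boundary piece. Since $\omega(t)\ge t/2$, choosing $L_{1}\ge 4(\widetilde{\mathrm C_{0}}+\mathrm{Lip}_{g})$ rules out that configuration. We then argue by contradiction at an interior positive maximum $(\hat x,\hat y)$. Choosing $L_{2}$ large (depending on $\|u\|_\infty\le1$ and $r_{1}-r$) gives $\hat x\ne\hat y$, $|\hat x-\hat y|\le 2/\sqrt{L_{2}}$, and $\hat x,\hat y$ interior.

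Next we apply the Crandall--Ishii--Lions lemma. It produces jets with gradients $\xi_{\hat x},\xi_{\hat y}=L_{1}\omega'(|\hat x-\hat y|)\frac{\hat x-\hat y}{|\hat x-\hat y|}\pm 2L_{2}(\cdot-x_{0})$, so $\frac{L_{1}}{4}\le|\xi_{\hat x}|,|\xi_{\hat y}|\le 2L_{1}$ once $L_{1}\gg L_{2}$. It also gives the standard matrix inequality, from which
\begin{equation*}
  \mathscr P^{+}_{\lambda,\Lambda}(\mathrm X-\mathrm Y)\le -4\lambda L_{1}|\omega''(|\hat x-\hat y|)|+\mathrm C L_{2}
\end{equation*}
and $\|\mathrm X\|\lesssim L_{1}|\hat x-\hat y|^{-1}+L_{2}$. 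Here $\xi=0$, so both gradients are large: $|\xi_{\hat x}|\ge L_{1}/4\ge1$. Dividing the viscosity inequalities by $\Phi$ as in \eqref{Section3:eq16} is therefore legitimate. By \hyperref[A3]{\bf (A3)} with $t\ge1$ we have $\Phi(t,\cdot)\ge L^{-1}\nu_{0}t^{i(\Phi)}$, hence
\begin{equation*}
  \frac{|f|+|H|}{\Phi}\lesssim (\|f\|_\infty+\mathcal M_{1})L_{1}^{-i(\Phi)}+\mathcal M_{2}L_{1}^{m-i(\Phi)}.
\end{equation*}
This bound is exactly where the singular case $i(\Phi)<0$ makes the right-hand side grow with $L_{1}$.

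The main obstacle is absorbing these terms into the good term $4\lambda L_{1}|\omega''(\delta)|\sim\lambda L_{1}\delta^{\sigma-1}$, where $\delta=|\hat x-\hat y|\le1$. The zero-order contributions are harmless: $L_{1}^{-i(\Phi)}$ grows sublinearly since $-i(\Phi)<1$, while $\delta^{\sigma-1}\ge1$. The Hamiltonian contribution $\mathcal M_{2}L_{1}^{m-i(\Phi)}$ is the dangerous one, because $m-i(\Phi)\le1$ is allowed. If $m<1+i(\Phi)$, its exponent is $<1$ and it is absorbed. In the borderline case $m=1+i(\Phi)$ it is linear in $L_{1}$, so we instead exploit the smallness regime of Remark~\ref{Section3:rmk1}, $\mathcal M_{2}\le\epsilon_{0}$, and choose $\epsilon_{0}$ (or use the extra factor $\delta^{\sigma-1}\ge1$) so that $\mathrm C\mathcal M_{2}\le\lambda\sigma(1+\sigma)/4$. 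The coefficient term $\mathrm C_{F}\delta^{\theta}\|\mathrm X\|\lesssim \mathrm C_{F}L_{1}\delta^{\theta-1}$ is absorbed by taking $\sigma<\theta$ together with smallness of $\delta$, which is guaranteed by $L_{2}$ large, as in \eqref{Section3:eq18}. Collecting everything yields $\lambda L_{1}\delta^{\sigma-1}\lesssim \mathrm C(L_{2},\|f\|_\infty,\mathcal M_{1})+o(L_{1})$, which is impossible for $L_{1}$ large. This contradiction establishes $\Gamma\le0$ and hence the stated $C^{0,1}$ estimate.
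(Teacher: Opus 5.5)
Your proposal is correct and is essentially the paper's own argument. It uses the same doubling function with $\omega(s)=s-\omega_0 s^{1+\beta}$, $\beta<\theta$, and excludes the boundary configuration through Lemma~\ref{Section3:lemma2}. It then takes Ishii--Lions jets with $|\xi_{\hat x}|,|\xi_{\hat y}|\simeq L_1\ge 1$, divides by $\Phi$, and reaches a contradiction from $L_1^{-i(\Phi)}=o(L_1)$ when $-1<i(\Phi)<0$.

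The only divergence is the borderline case $m=1+i(\Phi)$. There the paper does not need $\mathcal{M}_2$ small, nor the rescaling of Remark~\ref{Section3:rmk1} plus a covering argument. It bounds the Hamiltonian contribution by $4L\mathcal{M}_2L_1\le 4L\mathcal{M}_2L_1\delta^{\theta-1}$ and absorbs it together with the $\mathrm{C}_F$ term by choosing $L_2$ large depending on $\mathcal{M}_2$. This is your parenthetical alternative, but it works only with $\delta^{\sigma-1}\ge(\sqrt{L_2}/2)^{1-\sigma}$, made large through $L_2$, not with merely $\delta^{\sigma-1}\ge 1$.
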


\begin{proof}
Since the proof is similar to Proposition \ref{Section3:Prop3}, here we concentrate on the differences.

Let $ r_{1} \in (r,\frac{1}{2}) $ be fixed. For $ x_{0} \in B_{r} \cap \{y_{n} > \varphi(y')\} $. We consider an auxiliary function
\begin{equation}\label{Section3:eq19}
  \widetilde{\Gamma}(x,y):=u(x) - u(y) - L_{1} \omega(|x-y|) - L_{2}(|x-x_{0}|^{2} + |y-x_{0}|^{2}),
\end{equation}
for $ L_{1}, L_{2} >1 $, where
\[
 \omega(s) =
\begin{cases}
s - \omega_0 s^{1+\beta} & \text{if } 0 \leq s \leq s_0 := \left( \dfrac{1}{(1+\beta)\omega_0} \right)^{1/\beta}, \\[6pt]
\omega(s_0) & \text{if } s > s_0,
\end{cases}
\]
with $\beta \in (0, \theta)$. Here we choose $\omega_0 \in \left(0, \dfrac{1}{1+\beta}\right)$ such that $s_0 \geq 1$. Our goal is to show
\begin{equation}\label{Section3:eq20}
  \widetilde{\Gamma}(x,y) \leq 0  \quad  \text{in}  \quad (B_{r_{1}} \cap \{y_{n} > \varphi(y') \})^{2}.
\end{equation}

We now prove \eqref{Section3:eq20} by contradiction, suppose that there exists some point \((\hat{x}, \hat{y}) \in (B_{r_1} \cap \overline{\Omega})^{2}\) such that
\[
\widetilde{\Gamma}(\hat{x},\hat{y}) := \max_{(B_{r_1} \cap \overline{\Omega}) \times (B_{r_1} \cap \overline{\Omega})} \widetilde{\Gamma}(x,y)> 0.
\]
As in the proof of Proposition \ref{Section3:Prop3}, we obtain a limiting subjet $(\xi_{\hat{x}}, \mathrm{X})$ of $u$ at $\hat{x}$ and a limiting superjet $(\xi_{\hat{y}}, \mathrm{Y})$ of $u$ at $\hat{y}$, such that the matrices $\mathrm{X}, \mathrm{Y} \in \text{Sym}(n) $ satisfying the matrix inequality
\[
\begin{pmatrix}\label{Section3:eq11}
\mathrm{X} & 0 \\
0 & -\mathrm{Y}
\end{pmatrix}
\leq
\begin{pmatrix}
\mathrm{A} & -\mathrm{A} \\
-\mathrm{A} & \mathrm{A}
\end{pmatrix}
+ (2L_2 + \epsilon)
\begin{pmatrix}
\textbf{I}\mathrm{d_{n}}   & 0     \\
0 & \textbf{I}\mathrm{d_{n}}
\end{pmatrix}
\]
with $\epsilon \in (0,1)$, that only depends on $||\mathrm{A}|| $ and can be made sufficiently small. Here,
\begin{equation*}
\left\{
     \begin{aligned}
& \xi_{\hat{x}} :=  L_1 \omega'(|\hat{x} - \hat{y}|) \frac{\hat{x} - \hat{y}}{|\hat{x} - \hat{y}|} + 2L_2 (\hat{x} - x_0);      \\
& \xi_{\hat{y}} := L_1 \omega'(|\hat{x} - \hat{y}|) \frac{\hat{x} - \hat{y}}{|\hat{x} - \hat{y}|}- 2L_2 (\hat{y} - x_0) ;    \\
& \mathrm{A} := L_1 \left[ \frac{\omega'(|\hat x - \hat y|)}{|\hat x - \hat y|} \textbf{I}\mathrm{d_{n}}+ \left( \omega''(|\hat x - \hat y|) - \frac{\omega'(|\hat x - \hat y|)}{|\hat x - \hat y|} \right) \frac{(\hat x - \hat y) \otimes (\hat x - \hat y)}{|\hat x - \hat y|^2} \right].
\end{aligned}
     \right.
\end{equation*}
Furthermore, we have the following viscosity inequalities
\begin{equation}\label{Section3:eq21}
\left\{
     \begin{aligned}
& \Phi(|\xi_{\hat{x}}|, \hat{x}) F(\mathrm{X}, \hat{x}) + H(|\xi_{\hat{x}}|, \hat{x}) \geq f(\hat{x});      \\
& \Phi(|\xi_{\hat{y}}|, \hat{y}) F(\mathrm{Y}, \hat{y}) + H(|\xi_{\hat{y}} |, \hat{y}) \leq f(\hat{y}).
\end{aligned}
     \right.
\end{equation}
We claim that
\begin{equation}\label{Section3:eq22}
  1< |\xi_{\hat{x}} |, |\xi_{\hat{y}} | < 2L_{1}.
\end{equation}
In effect, we choose $ L_{1} > \max \{8, L_{2}\} $ and $ L_{2} > \bigg[  \frac{2^{3+\beta}\omega_{0}(1+\beta)}{5}  \bigg]^{\frac{2}{\beta}} $, then we have $ |\xi_{\hat{x}} | , |\xi_{\hat{y}} | < 2L_{1} $ and
\begin{equation*}
  |\xi_{\hat{x}}| \geq L_{1} \omega'(|\xi_{\hat{x}}-\xi_{\hat{y}}|) - \frac{L_{2}}{4}  \geq \frac{3}{8}L_{1} -  \frac{L_{2}}{4} = \frac{L_{1}}{8} > 1.
\end{equation*}

Next we can rewrite \eqref{Section3:eq22} as
\begin{equation*}
\left\{
     \begin{aligned}
&  F(\mathrm{X}, \hat{x}) + \frac{H(|\xi_{\hat{x}}|, \hat{x})}{\Phi(|\xi_{\hat{x}}|, \hat{x})} \geq \frac{f(\hat{x})}{\Phi(|\xi_{\hat{x}}|, \hat{x})};      \\
&  F(\mathrm{Y}, \hat{y}) + \frac{H(|\xi_{\hat{y}}|, \hat{y})}{\Phi(|\xi_{\hat{y}}|, \hat{y})} \leq \frac{f(\hat{y})}{\Phi(|\xi_{\hat{y}}|, \hat{y})}.
\end{aligned}
     \right.
\end{equation*}
As in the proof of Proposition \ref{Section3:Prop3}, we deduce \begin{align}\label{Section3:eq23}
\begin{split}
  \frac{f(\hat{x})}{\Phi(|\xi_{\hat{x}}|, \hat{x})} & - \frac{H(|\xi_{\hat{x}}|, \hat{x})}{\Phi(|\xi_{\hat{x}}|, \hat{x})} + \frac{H(|\xi_{\hat{y}}|, \hat{y})}{\Phi(|\xi_{\hat{y}} |, \hat{y})} - \frac{f(\hat{y})}{\Phi(|\xi_{\hat{y}} |, \hat{y})}  \\
  & \leq F(\mathrm{X}, \hat{x}) - F(\mathrm{Y}, \hat{y}) \\
  & \leq \mathrm{C}_{F} |\hat{x}-\hat{y}|^{\theta} ||\mathrm{X}|| + \mathscr{P}^{+}_{\lambda,\Lambda}(\mathrm{X}-\mathrm{Y}).
  \end{split}
\end{align}
Observe that the upper estimates for the norms $ ||\mathrm{X}|| $ and $ \mathscr{P}^{+}_{\lambda,\Lambda}(\mathrm{X}-\mathrm{Y}) $, and estimates for each term on the left side of \eqref{Section3:eq17} which can be found in \cite[Proposition 3.2]{HJMZ26a}, hence it yields
\begin{equation}\label{Section3:eq24}
-4L \left( \|f\|_{\infty} + \mathcal{M}_{1} \right) L_1^{-i(\Phi)} \leq \mathrm{C}_1 + L_1 |\hat{x} - \hat{y}|^{\beta-1} \left[ (\mathrm{C}_F + 4L\mathcal{M}_{2}) |\hat{x} - \hat{y}|^{\theta-\beta} - 4\lambda\omega_0\beta(1+\beta) \right],
\end{equation}
where \( \mathrm{C}_1 := (\Lambda(n-1) + \lambda)(4L_2 + 2) + \mathrm{C}_F(2L_2 + 1) \). Choosing \( L_2 \geq 4\left(\frac{\mathrm{C}_F + 4L\mathcal{M}_{2}}{2\lambda\omega_0\beta(1+\beta)}\right)^{2/(\theta-\beta)} \) such that
\[
(\mathrm{C}_F + 4L\mathcal{M}_{2}) |\hat{x} - \hat{y}|^{\theta-\beta} \leq (\mathrm{C}_F + 4L\mathcal{M}_{2})\left(\frac{2}{\sqrt{L_2}}\right)^{\theta-\beta} \leq 2\lambda\omega_0\beta(1+\beta),
\]
where we have used \ref{1i} of Proposition  \ref{Section3:Prop3} in the first inequality.

Recall that $ 0< \beta < \theta < 1 $ and $ |\hat{x} - \hat{y}| < 1 $, then \eqref{Section3:eq24} becomes
\begin{equation*}
  2\lambda\omega_0\beta(1+\beta) \leq \mathrm{C}_{1}  +  4L \left( \|f\|_{\infty} + \mathcal{M}_{1} \right) L_1^{-i(\Phi)},
\end{equation*}
which does not hold for sufficiently large $ L_{1} $, when $ -1 < i(\Phi) < 0 $.

This completes the proof.
\end{proof}

To prove the boundary version of approximation lemma, see Lemma \ref{Section4:lem1}, we rely on the following near-boundary $ C^{0,1}$-estimate for the vector-switching equation in the degenerate setting ($ 0 < m \leq i(\Phi) $).

\begin{proposition}[{Boundary Lipschitz estimates for large $ |\xi|$: degenerate case}]\label{Section3:Thm5}
Let $ g$ be a Lipschitz continuous function. Suppose that \hyperref[A1]{\bf (A1)}--\hyperref[A6]{\bf (A6)} are in force with $ 0 < m \leq i(\Phi) $ and $ \nu_{0} = \nu_{1} =1 $. Let $ u $ be a viscosity solution to \eqref{Section3:eq1} with
\begin{equation*}
  ||u||_{L^{\infty}(B_{1}\cap \{y_{n}>\varphi(y')\})} \leq 1  \quad   \text{and}  \quad    ||f||_{L^{\infty}(B_{1}\cap \{y_{n}>\varphi(y')\})} \leq  \epsilon_{0}.
\end{equation*}
Then for all $ r \in (0,1) $, there exists $ A_{0} = A_{0}(n, \lambda, \Lambda, i(\Phi), s(\Phi), r, \epsilon_{0}, \mathcal{M}_{1}, \mathcal{M}_{2}, \mathrm{Lip}_{g}(\partial \Omega)) > 0 $, such that if $ |\xi| > A_{0} $, then $ u \in C^{0,1}(B_{r}\cap \{y_{n}>\varphi(y')\}) $ with the estimate
\begin{equation*}
  ||u||_{C^{0,1}(B_{r}\cap \{y_{n}>\varphi(y')\})} \leq \mathrm{C}(n, \lambda, \Lambda, i(\Phi), s(\Phi), r, \epsilon_{0}, \mathcal{M}_{1}, \mathcal{M}_{2}, \mathrm{Lip}_{g}(\partial \Omega)).
\end{equation*}
\end{proposition}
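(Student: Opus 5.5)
We follow the scheme of Propositions~\ref{Section3:Prop3} and~\ref{Section3:Prop4}: a doubling-variables argument with a Lipschitz-type modulus, combined with the boundary control of Lemma~\ref{Section3:lemma2}. The new feature is that the gradient in the equation is shifted by a large vector $\xi$.

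\textbf{Step 1 (auxiliary function and boundary points).} Fix $r_1\in(r,\frac12)$ and $x_0\in B_r\cap\{y_n>\varphi(y')\}$. Consider
\[
\widetilde{\Gamma}(x,y)=u(x)-u(y)-L_1\omega(|x-y|)-L_2\big(|x-x_0|^2+|y-x_0|^2\big),
\]
where $\omega(s)=s-\omega_0 s^{1+\beta}$ for $s\le s_0$ and $\omega(s)=\omega(s_0)$ otherwise, exactly as in \eqref{Section3:eq19}. We argue by contradiction and assume $\max\widetilde{\Gamma}>0$ on $(B_{r_1}\cap\overline{\Omega})^2$. If a maximum point has a coordinate on the flat part $\{y_n=\varphi(y')\}$, Lemma~\ref{Section3:lemma2} gives $|u(x)-u(y)|\le(\widetilde{\mathrm C}_0+\mathrm{Lip}_g)|x-y|$. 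This is excluded once $L_1\ge 4(\widetilde{\mathrm C}_0+\mathrm{Lip}_g)$. The constant in Lemma~\ref{Section3:lemma2} must be made uniform in $\xi$: redo the barrier with $|D\omega+\xi|$ in place of $|D\omega|$, or treat the shifted equation through the Pucci bounds. The choice $L_2>32/(r_1-r)^2$ keeps the maximum point $(\hat x,\hat y)$ interior, with $\hat x\ne\hat y$ and $|\hat x-\hat y|\le 2/\sqrt{L_2}$. The Ishii--Lions lemma \cite[Theorem 3.2]{CIL92} then produces jets $(\xi_{\hat x},\mathrm X)$ and $(\xi_{\hat y},\mathrm Y)$ satisfying the same matrix inequality as before. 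We also have $L_1/8\le|\xi_{\hat x}|,|\xi_{\hat y}|\le 2L_1$.

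\textbf{Step 2 (using $|\xi|$ large).} Fix $L_1,L_2$ first; they are universal and independent of $\xi$. Then choose $A_0\ge 4L_1$. For $|\xi|>A_0$ this gives
\[
\tfrac12|\xi|\le|\xi_{\hat x}+\xi|,\ |\xi_{\hat y}+\xi|\le 2|\xi|,
\]
so the equation is uniformly non-degenerate at both points. Dividing by $\Phi$ as in \eqref{Section3:eq16}, assumption (A3) with $|\xi|\ge1$ and $i(\Phi)\ge m>0$ yields
\[
\frac{|f|+\mathcal M_1}{\Phi(|\xi_{\cdot}+\xi|,\cdot)}\le 2^{i(\Phi)}L(\epsilon_0+\mathcal M_1)|\xi|^{-i(\Phi)}\le C,
\qquad
\frac{\mathcal M_2|\xi_\cdot+\xi|^m}{\Phi(|\xi_\cdot+\xi|,\cdot)}\le C\mathcal M_2|\xi|^{m-i(\Phi)}\le C\mathcal M_2.
\]
This is exactly where $m\le i(\Phi)$ is used. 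The Hamiltonian and source terms are therefore bounded by a constant $C_\ast$ independent of $\xi$. Note that $s(\Phi)$ enters only through the almost-monotonicity constant, which is why it appears in the dependence of $A_0$.

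\textbf{Step 3 (contradiction).} Subtract the two inequalities and use (A1)--(A2) as in \eqref{Section3:eq23}. The same estimates as in Proposition~\ref{Section3:Prop4} (from \cite[Proposition 3.2]{HJMZ26a}) give
\[
-2C_\ast\le \mathrm C_1+L_1|\hat x-\hat y|^{\beta-1}\Big[\mathrm C_F|\hat x-\hat y|^{\theta-\beta}-4\lambda\omega_0\beta(1+\beta)\Big].
\]
Choose $L_2$ so large that $\mathrm C_F(2/\sqrt{L_2})^{\theta-\beta}\le2\lambda\omega_0\beta(1+\beta)$. The bracket is then negative, and we obtain $2L_1\lambda\omega_0\beta(1+\beta)\le \mathrm C_1+2C_\ast$. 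This fails for $L_1$ large, which gives the contradiction. Hence $\widetilde{\Gamma}\le0$, and letting $x_0$ range over the domain yields the Lipschitz bound.

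\textbf{Main obstacle.} The delicate point is the order of constants. $L_1$ must be fixed independently of $\xi$, including the constant from Lemma~\ref{Section3:lemma2} for the $\xi$-shifted problem, and only afterwards can $A_0$ be taken depending on $L_1$. For the boundary step I would first try the following: with $u$ bounded by $1$ and $|\xi|$ large, $u$ solves in the viscosity sense $\mathscr P^-(D^2u)-C\le0\le\mathscr P^+(D^2u)+C$ with $C$ independent of $\xi$, by the same bounds as in Step 2. Standard Pucci barriers then give the boundary Lipschitz estimate uniformly in $\xi$.
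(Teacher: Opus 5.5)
Your proposal is correct and is essentially the paper's proof. It uses the same doubling-variables function with the modulus $\omega$ of Proposition~\ref{Section3:Prop4}, and it treats the boundary case by a $\xi$-uniform boundary-distance estimate (the paper rescales by $\Phi(|\xi|,\cdot)$ and applies Lemma~\ref{Section3:lemma6} with $b=1/|\xi|$, $q=\xi/|\xi|$). It closes the interior case by taking $A_0$ a fixed multiple of $L_1$ (the paper takes $A_0=3L_1$), so that $m\le i(\Phi)$ bounds $H/\Phi$ and $f/\Phi$ independently of $\xi$.

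One caution on your ``Main obstacle'' paragraph. The claim that $u$ satisfies $\mathscr{P}^-(D^2u)-C\le 0\le \mathscr{P}^+(D^2u)+C$ with $C$ independent of $\xi$ is not justified for arbitrary test functions, since nothing controls test gradients near $-\xi$. It is not needed, though: a barrier comparison only tests $u$ against the barrier itself, whose gradient is bounded independently of $\xi$. So the barrier route goes through once $A_0$ also exceeds twice that bound.
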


Before proving Proposition~\ref{Section3:Thm5}, we require the following auxiliary lemma.

\begin{lemma}\label{Section3:lemma6}
Suppose that the assumptions \hyperref[A1]{\bf (A1)}--\hyperref[A6]{\bf (A6)} hold. Let $ g$ be a Lipschitz continuous function on $ \partial \Omega $ and $ q \in \mathbb{R}^{n} $ with $ |q| =1 $. Then for every $r \in (0,1)$, there exists $ \widetilde{\mathrm{C}}_{\varphi, B_{r}} $, depending on $ ||f||_{L^{\infty}(B_{1}\cap \{y_{n}>\varphi(y')\})} $, $ \lambda, \Lambda $, $ \beta_{g} $, $ \varphi $, $ \mathrm{Lip}_{g}(\partial \Omega) $, such that for $ 0 \leq b < 1 $, any viscosity solution $ u $ of
\begin{equation*}
\left\{
     \begin{alignedat}{2}
         \Phi(|b Du+q|,y) F(D^{2}u,y)+ H(|b Du+q|, y)  & = f         \quad   &&   \text{in} \quad  B_{1} \cap \{y_{n} > \varphi(y')\}     ,    \\
          u   &  = g    \quad  &&   \text{on} \quad  B_{1} \cap \{y_{n} = \varphi(y')\} ,        \\
     \end{alignedat}
     \right.
\end{equation*}
and $\|u\|_{L^\infty(B_1 \cap \{y_n > \varphi(y')\})} \leq 1$, then
\[
|u(y', y_n) - g(y')| \leq \widetilde{\mathrm{C}}_{\varphi, B_{r}} d(y)   \quad   \text{in } B_r \cap \{y_n > \varphi(y')\}.
\]
\end{lemma}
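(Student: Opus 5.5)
We adapt the barrier argument of Lemma~\ref{Section3:lemma2}. Fix $r\in(0,1)$ and let $\Omega_r$ be a $C^2$ domain with $\{y_n>\varphi(y')\}\cap B_{\frac{1+r}{2}}\subset\Omega_r\subset B_1\cap\{y_n>\varphi(y')\}$. Let $\widetilde g$ solve $\mathscr{P}^{+}_{\lambda,\Lambda}(D^2\widetilde g)=f$ with boundary datum $g$; it is $C^{1,\beta}$ up to the flat-type boundary with $\|\widetilde g\|_\infty\le 1$ and $\|\nabla\widetilde g\|_\infty$ controlled. We use the same upper barrier
\[
\omega=M\omega_0+\frac{2^5}{(1-r)^3}\big((|y|-r)^+\big)^3+\widetilde g,\qquad \omega_0=1-(1+d)^{-k},
\]
with $k$ chosen so that $\mathscr{P}^{+}_{\lambda,\Lambda}(D^2\omega)\le -c_kM$ for $M$ large, exactly as in \eqref{Section3:eq7}. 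The only new point is that the degeneracy and the Hamiltonian are evaluated at $|bD\omega+q|$ rather than $|D\omega|$. We then check that $\omega$ is a strict supersolution and that $\omega\ge u$ on $\partial\Omega_r$. The latter is identical to before. The comparison principle, Proposition~\ref{Section3:prop1}, applied to the operator $(x,p,X)\mapsto \Phi(|bp+q|,x)F(X,x)+H(|bp+q|,x)$, then gives $u\le\omega$, hence $u-g\le \widetilde{\mathrm C}\,d$. The symmetric barrier $-M\omega_0-(\cdots)+\widetilde g$, built with $\mathscr P^-$, gives the lower bound.

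The main obstacle is making the supersolution inequality hold uniformly in $b\in[0,1)$. When $b$ is small, $|bD\omega+q|\approx 1$, so the gradient no longer provides the large factor $M^{i(\Phi)}$. We split into two regimes according to the size of $b|D\omega|$, using \eqref{Section3:eq6}: $\frac{Mk}{2(1+D_{\Omega_r})^{1+k}}\le|D\omega|\le c_2M$.

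\emph{Regime (a): $b|D\omega|\ge 2$.} Then $\tfrac12 b|D\omega|\le|bD\omega+q|\le 2b|D\omega|$. The dominant term is $-\Phi(|bD\omega+q|)c_kM$, and by (A3), $\Phi(t)\ge L^{-1}t^{i(\Phi)}$ for $t\ge1$ when $\nu_0=1$. Thus
\[
\Phi F+H\le -L^{-1}t^{i(\Phi)}c_kM+\mathcal M_1+\mathcal M_2t^{m},\qquad t=|bD\omega+q|\in[1,2bc_2M].
\]
Because $m\le 1+i(\Phi)$ and $t\lesssim M$, we have $t^{m}\le t^{i(\Phi)}t^{m-i(\Phi)}\lesssim t^{i(\Phi)}M^{m-i(\Phi)}$, and $m-i(\Phi)\le 1$. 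For $m<1+i(\Phi)$ this is dominated by $t^{i(\Phi)}M$ once $M$ is large. In the borderline case $m=1+i(\Phi)$ we need $\mathcal M_2$ small relative to $c_k$, which is available in the smallness regime of Remark~\ref{Section3:rmk1}; alternatively $k$ can be increased, since $c_k\sim k^2$ while the gradient constant is $\sim k$. The remaining constant $\mathcal M_1+\|f\|_\infty$ is absorbed because $t^{i(\Phi)}\ge1$.

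\emph{Regime (b): $b|D\omega|<2$.} Then $|bD\omega+q|\in[0,3)$ and the degeneracy could be small. This is where $\Phi$ could vanish, so I would first modify the barrier by adding $\delta$ times a linear function. Better, I would use that $q$ is a unit vector: in this regime $|bD\omega+q|\ge 1-b|D\omega|$, which fails to help near $b|D\omega|\approx1$. The fix I would try is to exploit that the barrier gradient is essentially $M k\nabla d/(1+d)^{k+1}$, nearly parallel to the normal $\nabla d$, plus bounded terms. Replacing $\omega_0$ by $\omega_0\pm\eta\, q\cdot y$ (sign chosen) keeps $bD\omega+q$ away from zero, so that $|bD\omega+q|\ge c_0>0$. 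The bounds $\Phi(t)\ge L^{-1}\min(t^{i(\Phi)},t^{s(\Phi)})$ then give $\Phi\ge c(c_0)>0$, and $-c\,c_kM+\mathcal M_1+\mathcal M_2 3^m\le-\|f\|_\infty$ for $M$ large. If this modification is awkward, the fallback is to choose $M$ depending on $\mathcal M_1,\mathcal M_2,\|f\|_\infty$ only, and to handle the degenerate set $\{bD\omega+q=0\}$ through the alternative clause in Definition~\ref{subsection 2.1}, since no test function with vanishing shifted gradient needs to be checked there. Either way all constants are independent of $b$, which yields the uniform $\widetilde{\mathrm C}_{\varphi,B_r}$.
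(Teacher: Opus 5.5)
Your overall route is the one the paper intends: it omits the proof and defers to Lemma~\ref{Section3:lemma2} and \cite[Lemma 2.4]{BSRR23}. You also correctly isolate the only new difficulty, namely uniformity in $b$ when $b|D\omega|\approx 1$. However, neither remedy you propose for regime (b) works, so the proof has a gap exactly there.

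\textbf{Why regime (b) is a real problem.} Take a flat piece of boundary, $g\equiv0$, $f\equiv0$ (so $\widetilde g\equiv0$), $q=-e_n$ and $\Phi(t)=t^{i(\Phi)}$ with $i(\Phi)>0$. In $B_r$ one has $D\omega=M\frac{k}{(1+y_n)^{k+1}}e_n$. So for $b\in[\frac{1}{Mk},\frac{(1+r)^{k+1}}{Mk})$ the shifted gradient $bD\omega+q$ vanishes on a whole hyperplane $\{y_n=d^*\}$ and is arbitrarily small on a slab around it.

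\textbf{Why the linear correction fails.} Adding $\pm M\eta\,q\cdot y$ destroys the barrier, because $\omega\ge u$ on $\partial\Omega_r$ is lost.
\begin{itemize}
\item If $q$ has a tangential component, then on the flat part $\omega-g=\pm M\eta\,q\cdot y$ changes sign.
\item If $q$ points outward, which is precisely the case where vanishing occurs, avoiding $bD\omega+q=0$ forces $\eta>\sup|D\omega_0|=k$. Then $M(\omega_0+\eta\,q\cdot y)\ll-1$ on the inner part $\{|y|\ge\frac{1+r}{2}\}$, where you need $\omega\ge1$.
\end{itemize}
Moreover, $\omega-g$ is then in general no longer $O(d)$.

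\textbf{Why the fallback fails.} The second clause of Definition~\ref{subsection 2.1} concerns only test functions whose shifted gradient is exactly zero. The obstruction is a whole neighbourhood of $\{bD\omega+q=0\}$, on which $\Phi(|bD\omega+q|)\,c_kM$ is too small to dominate $\mathcal M_1+\|f\|_\infty$. If the maximum of $u-\omega$ lies there, testing the subsolution $u$ with $\omega$ only yields $H(|bD\omega+q|,x_0)\ge f(x_0)+\Phi\,c_kM\ge f(x_0)$. This is no contradiction, for instance when $H\equiv\mathcal M_1>0$ and $f\equiv0$. Enlarging $M$ shrinks this set but never empties it.

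\textbf{The missing idea.} $M$ may depend on $b$, provided it stays in a window independent of $b$. By \eqref{Section3:eq6}, for $M\ge M_0$ you have $a_1M\le|D\omega|\le a_2M$ on $\Omega_r$, with $a_1=\frac{k}{2(1+D_{\Omega_r})^{1+k}}$ and $a_2=2k$. The ratio $a_2/a_1$ is independent of $M$.
\begin{itemize}
\item If $b\,a_2M_0\le\frac12$, keep $M=M_0$. Then $|bD\omega+q|\in[\frac12,\frac32]$ on all of $\Omega_r$. By (A3), $\Phi(|bD\omega+q|,\cdot)$ is bounded below by a constant depending only on $L,\nu_0,i(\Phi),s(\Phi)$, and $-\Phi\,c_kM_0$ wins for $M_0$ large.
\item Otherwise $b>1/(2a_2M_0)$. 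Then $M:=\max\{M_0,2/(ba_1)\}\le16(1+D_{\Omega_r})^{1+k}M_0$ gives $b|D\omega|\ge2$ everywhere, i.e.\ your regime (a) on the whole domain.
\end{itemize}
In both cases the shifted gradient of $\omega$ never vanishes. So the comparison step is elementary: at an interior maximum of $u-\omega$, $\omega$ is an admissible test function, which contradicts the strict supersolution inequality. The resulting constant, of order $kM+\|\nabla\widetilde g\|_\infty$, is uniform in $b$.

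\textbf{Two smaller points in regime (a).}
\begin{itemize}
\item If $-1<i(\Phi)<0$ (allowed by (A3)), the claim $t^{i(\Phi)}\ge1$ is false. Use instead $t^{i(\Phi)}M\ge(2a_2M)^{i(\Phi)}M\to\infty$.
\item In the borderline case $m=1+i(\Phi)$, the ``increase $k$'' argument works only pointwise, via $|\partial^2_{\nu\nu}\omega_0|/|D\omega_0|=(k+1)/(1+d)$. It does not work with the global constant $c_k$ of \eqref{Section3:eq7}, which carries the factor $(1+D_{\Omega_r})^{-(k+2)}$ while the gradient bound in \eqref{Section3:eq6} is of size $k$.
\end{itemize}
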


The proof follows the same arguments as in Lemma \ref{Section3:lemma2}, see also \cite[Lemma 2.4]{BSRR23}. It is omitted here to avoid repetition.

Now we commence with the proof of Proposition~\ref{Section3:Thm5}.

\begin{proof}[Proof of Proposition~\ref{Section3:Thm5}]
Since the proof follows the same line as Proposition~\ref{Section3:Prop4}, we only highlight the differences. Here we define
\begin{equation*}
  \Phi(x,y):= u(x) - u(y) - L_{1} \omega(|x-y|)  - L_{2}(|x-x_{0}|^{2}+|y-x_{0}|^{2}),
\end{equation*}
for $ L_{1}, L_{2} >1 $, where
\[
 \omega(s) =
\begin{cases}
s - \omega_0 s^{1+\beta} & \text{if } 0 \leq s \leq s_0 := \left( \dfrac{1}{(1+\beta)\omega_0} \right)^{1/\beta}, \\[6pt]
\omega(s_0) & \text{if } s > s_0,
\end{cases}
\]
with $\beta \in (0, \theta)$. Here we choose $\omega_0 \in \left(0, \dfrac{1}{1+\beta}\right)$ such that $s_0 \geq 1$. Our goal remains to show that \eqref{Section3:eq20} is true.

(I). If $ y \in B_{r_{1}} \cap \{y_{n} = \varphi(y')\} $, we observe that $ u $ is a viscosity solution to
\begin{equation}\label{Section3:eq25}
\left\{
     \begin{alignedat}{2}
      \widetilde{\Phi}(|b Du+e_{n}|, y)F(D^{2}u, y) +  \widetilde{H}(|b Du+e_{n}|, y)  & = \widetilde{f}(y)  \quad && \text{in} \quad B_{1} \cap \{y_{n}>\varphi(y')\},    \\
          u(y)  & = g(y)    \quad  &&   \text{on}   \ \  B_{1}  \cap \{y_{n} = \varphi(y')\}        \\
     \end{alignedat}
     \right.
\end{equation}
where
\begin{equation*}
 \widetilde{\Phi}(t,y):= \frac{\Phi(|\xi|t, y)}{\Phi(|\xi|, y)}, \quad
 \widetilde{H}(t,y):= \frac{H(|\xi|t, y)}{\Phi(|\xi|, y)}, \quad
 \widetilde{f}(t):=\frac{f(t)}{\Phi(|\xi|, y)},   \quad   b:= \frac{1}{|\xi|}  \quad   \text{and}  \quad e_{n}:= \frac{\xi}{|\xi|}.
\end{equation*}
In fact, if $ \psi \in C^{2}(B_{1} \cap \{y_{n}>\varphi(y')\}) $, and $ x_{0} \in B_{1} \cap \{y_{n}>\varphi(y')\} $ such that $ u - \psi $ has a local minimum at $ x_{0} $, then
\begin{align*}
\widetilde{\Phi}(|b D\psi & +e_{n}|, x_{0})F(D^{2}\psi, x_{0}) +  \widetilde{H}(|b D\psi+e_{n}|, x_{0})   \\
& = \Phi^{-1}(|\xi|, x_{0}) \bigg[\Phi(|D\psi +\xi|, x_{0})F(D^{2}\psi, x_{0}) + H(|D\psi +\xi|, x_{0})     \bigg]  \\
& \leq \Phi^{-1}(|\xi|, x_{0}) f(x_{0}).
\end{align*}
Consequently, $ u $ is a viscosity super-solution to \eqref{Section3:eq25}. Similarly, $ u $ is a viscosity sub-solution to \eqref{Section3:eq25}. In this case, we can show \eqref{Section3:eq20} holds by applying Lemma \ref{Section3:lemma6}.

(II). We next proceed with the contradiction argument from Proposition~\ref{Section3:Prop4}. The distinction emerges in the use of the definitions of limiting superjet and subjet:
\begin{equation*}
\left\{
     \begin{aligned}
& \Phi(|\xi_{\hat{x}}+\xi|, \hat{x}) F(\mathrm{X}, \hat{x}) + H(|\xi_{\hat{x}}+\xi|, \hat{x}) \geq f(\hat{x});      \\
& \Phi(|\xi_{\hat{y}}+\xi|, \hat{y}) F(\mathrm{Y}, \hat{y}) + H(|\xi_{\hat{y}} +\xi|, \hat{y}) \leq f(\hat{y}).
\end{aligned}
     \right.
\end{equation*}
Recall that $ |\xi_{\hat{x}}|, |\xi_{\hat{y}}| \leq 2L_{1} $, then we select $ A_{0} = 3L_{1} $, one obtain
\begin{equation*}
 |\xi_{\hat{x}} +\xi|, |\xi_{\hat{y}} +\xi|  \geq A_{0} - |\xi_{\hat{x}}|   \geq  3L_{1} -  2L_{1} = L_{1}.
\end{equation*}
Combining this estimate with
\begin{align*}
 & -2L||f||_{\infty} \big(|\xi_{\hat{x}} +\xi|^{-i(\Phi)} + |\xi_{\hat{y}} +\xi|^{-i(\Phi)}    \big)   \\
   &\leq 2L(\mathcal{M}_{1}+\mathcal{M}_{2}) + \mathrm{C}_{1}  + L_{1}|\hat{x}-\hat{y}|^{\beta-1} \big[\mathrm{C}_{F} |\hat{x}-\hat{y}|^{\theta-\beta}- 4\lambda \beta(\beta+1)\omega_{0}    \big],
\end{align*}
where $ \mathrm{C}_{1}:=  \mathrm{C}_{F}(2L_{2}+1) + [\lambda+\Lambda(n-1)](4L_{2}+2) $.

It follows that
\begin{equation}\label{Section3:eq26}
  -2L||f||_{\infty} \leq 2L(\mathcal{M}_{1}+\mathcal{M}_{2}) + \mathrm{C}_{1}  + L_{1}|\hat{x}-\hat{y}|^{\beta-1} \big[\mathrm{C}_{F} |\hat{x}-\hat{y}|^{\theta-\beta}- 4\lambda \beta(\beta+1)\omega_{0}    \big].
\end{equation}
Choosing \( L_2 \geq 4\left(\frac{\mathrm{C}_F}{2\lambda\omega_0\beta(1+\beta)}\right)^{2/(\theta-\beta)} \), we note that \( \beta < \theta \), then
\[
\mathrm{C}_F |\hat{x} - \hat{y}|^{\theta-\beta} \leq \mathrm{C}_F\left(\frac{2}{\sqrt{L_2}}\right)^{\theta-\beta} \leq 2\lambda\omega_0\beta(1+\beta).
\]
Finally, \eqref{Section3:eq26} becomes
\begin{equation*}
2L_{1} \lambda \beta(\beta+1) \omega_{0} \leq \mathrm{C}_{1} + 2L (||f||_{\infty}+ \mathcal{M}_{1}+\mathcal{M}_{2}),
\end{equation*}
which is a contradiction, provided that $ L_{1} $ is large enough.

Hence the proof of Proposition~\ref{Section3:Thm5} is completed.
\end{proof}

Finally, we establish boundary H\"{o}lder regularity for viscosity solutions to \eqref{Section3:eq1} under the degenerate regime with small $ |\xi|$.

\begin{proposition}[{Boundary H\"{o}lder estimates for small $ |\xi|$: degenerate case}]\label{Section3:Thm7}
Let $ g$ be a Lipschitz continuous function. Suppose that \hyperref[A1]{\bf (A1)}--\hyperref[A6]{\bf (A6)} are in force with $ 0 < m \leq i(\Phi) $ and $ \nu_{0} = \nu_{1} =1 $. Let $ u $ be a viscosity solution to \eqref{Section3:eq1} with
\begin{equation*}
  ||u||_{L^{\infty}(B_{1}\cap \{y_{n}>\varphi(y')\})} \leq 1  \quad   \text{and}  \quad    ||f||_{L^{\infty}(B_{1}\cap \{y_{n}>\varphi(y')\})} \leq  \epsilon_{0}.
\end{equation*}
Then for all $ r \in (0,1) $, there exists $ A_{0} = A_{0}(n, \lambda, \Lambda, i(\Phi), s(\Phi), r, \epsilon_{0}, \mathcal{M}_{1}, \mathcal{M}_{2}, \mathrm{Lip}_{g}(\partial \Omega)) > 0 $, such that if $ |\xi| \leq A_{0} $, then $ u \in C^{0,\gamma}(B_{r}\cap \{y_{n}>\varphi(y')\}) $ for some $ \gamma \in (0,1) $ with the estimate
\begin{equation*}
  ||u||_{C^{0,\gamma}(B_{r}\cap \{y_{n}>\varphi(y')\})} \leq \mathrm{C}(n, \lambda, \Lambda, i(\Phi), s(\Phi), r, \epsilon_{0}, \mathcal{M}_{1}, \mathcal{M}_{2}, \mathrm{Lip}_{g}(\partial \Omega)).
\end{equation*}
\end{proposition}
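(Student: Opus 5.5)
The plan is to run the same doubling-variables argument as in Proposition~\ref{Section3:Prop3}, now using $|\xi|\le A_{0}$ in place of the smallness condition \eqref{Section3:eq8}. Fix $r_{1}\in(r,\frac12)$ and $x_{0}\in B_{r}\cap\{y_{n}>\varphi(y')\}$. Consider the function $\Gamma$ of \eqref{Section3:eq9},
\[
\Gamma(x,y)=u(x)-u(y)-L_{1}|x-y|^{\gamma}-L_{2}\big(|x-x_{0}|^{2}+|y-x_{0}|^{2}\big),
\]
with $\gamma\in(0,1)$ fixed. The goal is to show $\Gamma\le 0$ on $(B_{r_{1}}\cap\{y_{n}>\varphi(y')\})^{2}$ once $L_{1},L_{2}$ are large.

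\textbf{Boundary points.} When one of the points lies on $\{y_{n}=\varphi(y')\}$, Lemma~\ref{Section3:lemma2} gives $|u(z)-g(z')|\le \widetilde{\mathrm{C}_{0}}\,d(z)$. Its proof carries over verbatim to the shifted equation \eqref{Section3:eq1} for $|\xi|\le A_{0}$: the barrier $M\omega_{0}+\widetilde g$ has gradient of size $\sim M$, so $|D\omega+\xi|\ge M/C - A_{0}$. Taking $M$ large depending on $A_{0}$, we get $\Phi(|D\omega+\xi|)\gtrsim M^{i(\Phi)}$, which dominates the Hamiltonian term $\mathcal{M}_{1}+\mathcal{M}_{2}(cM+A_{0})^{m}$ because $m\le i(\Phi)<1+i(\Phi)$. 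Combined with the Lipschitz continuity of $g$, this yields $|u(x)-u(y)|\le(\widetilde{\mathrm{C}_{0}}+\mathrm{Lip}_{g})|x-y|$. Hence $\Gamma\le0$ at such pairs as soon as $L_{1}\ge 4(\widetilde{\mathrm{C}_{0}}+\mathrm{Lip}_{g})$.

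\textbf{Interior maximum.} Suppose instead that $\max\Gamma>0$ is attained at $(\hat x,\hat y)$. Properties (i)--(iii) of Proposition~\ref{Section3:Prop3} hold, and the Ishii--Lions lemma yields jets $(\xi_{\hat x},\mathrm{X})$ and $(\xi_{\hat y},\mathrm{Y})$ satisfying \eqref{Section3:eq12} and \eqref{Section3:eq13}. Choose $L_{1}>4A_{0}/\gamma+4/\gamma$. Since $|\hat x-\hat y|<1$ and $|\xi|\le A_{0}$, we obtain
\[
1\le \tfrac{\gamma L_{1}}{4}|\hat x-\hat y|^{\gamma-1}\le|\xi_{\hat x}+\xi|,\ |\xi_{\hat y}+\xi|\le 3\gamma L_{1}|\hat x-\hat y|^{\gamma-1}.
\]
This is the analogue of \eqref{Section3:eq15}, with $\widetilde{\mathcal{M}_{0}}$ replaced by $A_{0}$. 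We then divide by $\Phi$ as in \eqref{Section3:eq16} and use (A3) with $t=|\xi_{\hat x}+\xi|\ge1$, which gives $\Phi(t,\cdot)\ge L^{-1}t^{i(\Phi)}\ge L^{-1}$. It follows that $|f|/\Phi\le L\epsilon_{0}$, $\mathcal{M}_{1}/\Phi\le L\mathcal{M}_{1}$, and
\[
\mathcal{M}_{2}t^{m}/\Phi\le L\mathcal{M}_{2}t^{m-i(\Phi)}\le L\mathcal{M}_{2},
\]
the last step because $m\le i(\Phi)$. This is the key simplification relative to Proposition~\ref{Section3:Prop3}: in the regime $m\le i(\Phi)$ the Hamiltonian contributes only a bounded term and no smallness of $\mathcal{M}_{2}$ is needed. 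Subtracting the two inequalities as in \eqref{Section3:eq17}, and using the standard bounds on $\|\mathrm{X}\|$ and $\mathscr{P}^{+}(\mathrm{X}-\mathrm{Y})$ from the matrix inequality, gives
\[
-2L(\epsilon_{0}+\mathcal{M}_{1}+\mathcal{M}_{2})\le \mathrm{C}_{1}+L_{1}|\hat x-\hat y|^{\gamma-2}\big(\mathrm{C}_{F}\gamma|\hat x-\hat y|^{\theta}-4\lambda\gamma(1-\gamma)\big).
\]
Choose $L_{2}\ge 4(\mathrm{C}_{F}/(\lambda(1-\gamma)))^{2/\theta}$ so that $\mathrm{C}_{F}|\hat x-\hat y|^{\theta}\le\lambda(1-\gamma)$. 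Since $|\hat x-\hat y|<1$, this yields
\[
L_{1}\lambda\gamma(1-\gamma)\le \mathrm{C}_{1}+2L(\epsilon_{0}+\mathcal{M}_{1}+\mathcal{M}_{2}),
\]
which is false for $L_{1}$ large. Therefore $\Gamma\le0$, and $u\in C^{0,\gamma}$ with the claimed dependence of the constants.

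\textbf{Main obstacle.} The delicate step is the first: the boundary barrier of Lemma~\ref{Section3:lemma2} in the presence of the shift $\xi$. For $|\xi|\le A_{0}$ it suffices to enlarge $M$ beyond a multiple of $A_{0}$. Consistency requires that the $A_{0}$ here be the constant fixed in Proposition~\ref{Section3:Thm5} (namely $3L_{1}$ from that proof), and that the constants in the present proof may then depend on this $A_{0}$. This ordering is consistent, since $A_{0}$ is determined first and only afterwards are $M$, $L_{1}$ and $L_{2}$ chosen.
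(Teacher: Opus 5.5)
Your proposal is correct and is essentially the paper's proof. The paper only says the argument is that of Proposition~\ref{Section3:Prop3}, and you carry out exactly that Ishii--Lions doubling argument, with $|\xi|\le A_{0}$ in the role of $|\xi|\le\widetilde{\mathcal{M}_{0}}$ and the observation that for $m\le i(\Phi)$ the ratio $H/\Phi$ stays bounded once $|\xi_{\hat x}+\xi|\ge 1$, so the $\mathcal{M}_{2}$-dependent choice of $L_{2}$ is no longer needed. Your check that the barrier of Lemma~\ref{Section3:lemma2} survives the shift $\xi$ (by taking $M$ large relative to $A_{0}$) also fills in a step the paper leaves implicit.
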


The proof follows the same arguments as in Proposition \ref{Section3:Prop3}. It is omitted here to avoid repetition.

\begin{remark}\label{Section3:rmk3}
From Propositions \ref{Section3:Prop3} and \ref{Section3:Thm7}, we observe that the solution of \eqref{Section3:eq1} possesses H\"{o}lder regularity when $ \xi =0 $ and $ 0 < m \leq 1+i(\Phi)$. This plays a vital role in the proof Lemma \ref{Section4:lem1}, see Section \ref{Section 4} below.
\end{remark}

\section{Proof of Theorem \ref{Thm1}: global $ C^{1, \alpha}$ regularity}\label{Section 4}

In this section, our main goal is to prove Theorems~\ref{Thm1}. The proof of Theorem~\ref{Thm1} consists of two steps. Firstly, we need to establish a boundary version of the approximation lemma for \eqref{Section3:eq1}, see Lemma \ref{Section4:lem1} below. Secondly, by the virtue of delicate iteration and convergence analysis, we provide a complete proof of Theorem~\ref{Thm1}.

\begin{proposition}[{Pointwise boundary $ C^{1,\alpha}$ estimate: degenerate case}]
\label{Section4:prop1}
Suppose that the assumptions \hyperref[A1]{\bf (A1)}--\hyperref[A6]{\bf (A6)} are in force with $ i(\Phi) \geq 0 $ and $ \nu_{0} = \nu_{1} =1 $. Let $ \alpha $ be chosen to fulfill
\begin{equation*}
  \alpha \in (0,\alpha_{0}) \cap \bigg(0, \frac{1}{1+s(\Phi)}\bigg] \cap (0,\beta_{g}).
\end{equation*}
Then there exist constant $ \epsilon_{0} \in (0,1) $, $ 0< \rho < \frac{1}{2} $ and $ \mathrm{C}_{0} > 0 $ depending only on $ \alpha $, $\lambda$, $\Lambda$, $n $, $ ||D^{2}\varphi||_{\infty}, ||g||_{C^{1,\beta_{g}}(\partial \Omega)} $, $ i(\Phi)$ and $ s(\Phi)$ such that for any $ \xi \in \mathbb{R}^{n} $ and a viscosity solution $ u $ of
\begin{equation*}
\left\{
     \begin{alignedat}{2}
         \Phi(|Du|,y) F(D^{2}u,y)+ H(|Du|, y)  & = f         \quad   &&   \text{in} \quad  B_{1} \cap \{y_{n} > \varphi(y')\}     ,    \\
          u   &  = g    \quad  &&   \text{on} \quad  B_{1} \cap \{y_{n} = \varphi(y')\} ,        \\
     \end{alignedat}
     \right.
\end{equation*}
the following holds: if
\begin{equation*}
  ||u||_{L^{\infty}(B_{1}\cap \{y_{n}>\varphi(y')\})} \leq 1
\end{equation*}
and
\begin{equation*}
  \max \bigg\{||f||_{L^{\infty}(B_{1}\cap \{y_{n}>\varphi(y')\})}, ||\mathrm{osc}_{F}||_{L^{\infty}(B_{1}\cap \{y_{n}>\varphi(y')\})}, \mathcal{M}_{1}, \mathcal{M}_{2} \big(1+|\xi|^{(m-i(\Phi))_{+}}\big)  \bigg\} \leq \epsilon_{0}.
\end{equation*}
Then there exists an affine function $ l(y) = a+ b\cdot y $ with $ |a| + |b| \leq \mathrm{C}_{0} $ such that for each $ 0 < r \leq \rho $,
\begin{equation*}
  ||u-l||_{L^{\infty}(B_{r}\cap \{y_{n}>\varphi(y')\})} \leq \mathrm{C}r^{1+\alpha},
\end{equation*}
for some positive constant $ \mathrm{C} $.
\end{proposition}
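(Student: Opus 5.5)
This is the boundary flatness-improvement scheme of \cite{ART15, BBLL24b}, carried out for the shifted equation \eqref{Section3:eq1}; the $\xi$ in the statement is the slope accumulated along the iteration. The argument has three steps.

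\textbf{Step 1: boundary approximation lemma (Lemma~\ref{Section4:lem1}).} For every $\delta>0$ there is $\epsilon_0>0$ with the following property. If $u$ solves \eqref{Section3:eq1} for some $\xi$, $\|u\|_\infty\le1$, $\|g\|_{C^{1,\beta_g}}\le 1$, and the smallness hypothesis of the statement holds, then there is $h$ solving $F_\infty(D^2h)=0$ in $B_{3/4}\cap\{y_n>\varphi(y')\}$ with $h=g$ on the flat part and $\|u-h\|_{L^\infty(B_{3/4}\cap\{y_n>\varphi\})}\le\delta$.

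I prove this by contradiction, taking sequences $u_j,\xi_j,f_j,H_j,F_j$ with $\epsilon_0=1/j$. Equicontinuity up to the boundary comes from Section~\ref{Section 3}, split by the size of $m$ and $|\xi_j|$:
\begin{itemize}
\item if $i(\Phi)<m\le 1+i(\Phi)$, condition \eqref{Section3:eq8} holds with $\kappa=\epsilon_0$, so Proposition~\ref{Section3:Prop3} gives the estimate;
\item if $m\le i(\Phi)$ and $|\xi_j|>A_0$, use Proposition~\ref{Section3:Thm5};
\item if $m\le i(\Phi)$ and $|\xi_j|\le A_0$, use Proposition~\ref{Section3:Thm7}.
\end{itemize}
Arzelà–Ascoli then gives uniform convergence. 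Passing $\xi_j$ to a limit (or normalizing when $|\xi_j|\to\infty$), the stability Lemma~\ref{Section2:lem2} yields a limit equation $\Phi_\infty(|Du_\infty+\xi_\infty|)F_\infty(D^2u_\infty)=0$. Lemma~\ref{Section2:lem2} applies because $f_j,H_j\to0$ uniformly and $\mathcal{M}_1,\mathcal{M}_2(1+|\xi_j|^{(m-i)_+})\to0$. When $|\xi_j|\to\infty$ the degeneracy disappears after dividing by $\Phi(|\xi_j|)$. Otherwise the Cutting Lemma~\ref{Section2:lem1}, applied to $u_\infty+\xi_\infty\cdot y$ with $i(\Phi)\ge0$, removes it. Either way the limit solves $F_\infty(D^2u_\infty)=0$, which is a contradiction.

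\textbf{Step 2: first step of the iteration.} Boundary $C^{1,\alpha_0}$ regularity for the uniformly elliptic problem with $C^{1,\beta_g}$ data on a $C^2$ graph gives an affine $l_1$ with $|l_1|\le C$ and $\|h-l_1\|_{L^\infty(B_\rho\cap\{y_n>\varphi\})}\le C\rho^{1+\bar\alpha}$, where $\bar\alpha=\min\{\alpha_0,\beta_g\}^{-}>\alpha$. I choose $\rho$ so that $C\rho^{1+\bar\alpha}\le\frac12\rho^{1+\alpha}$, and then $\delta=\frac12\rho^{1+\alpha}$. This gives $\|u-l_1\|\le\rho^{1+\alpha}$.

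\textbf{Step 3: induction.} Suppose $l_k$ has been found with $\|u-l_k\|_{L^\infty(B_{\rho^k}\cap\{y_n>\varphi\})}\le\rho^{k(1+\alpha)}$. Set
$$u_k(y)=\frac{(u-l_k)(\rho^k y)}{\rho^{k(1+\alpha)}},\qquad \xi_k=\rho^{-k\alpha}Dl_k .$$
Then $u_k$ solves an equation of the form \eqref{Section3:eq1} with:
\begin{itemize}
\item $\Phi$ renormalized by $\Phi(\rho^{k\alpha},\cdot)$ as in Remark~\ref{Section3:rmk1};
\item boundary datum $(g-l_k)(\rho^k\cdot)/\rho^{k(1+\alpha)}$, whose $C^{1,\beta_g}$ norm is controlled because $\alpha<\beta_g$ and $Dl_k$ matches $Dg$ tangentially (this has to be arranged when choosing $l_k$);
\item right-hand side of size $\rho^{k(1-\alpha)}f/\Phi(\rho^{k\alpha})\lesssim \rho^{k(1-\alpha-\alpha s(\Phi))}\epsilon_0\le\epsilon_0$, which is exactly where $\alpha\le 1/(1+s(\Phi))$ is used via (A3);
\item flattened domain $\varphi_k(y')=\rho^{-k}\varphi(\rho^ky')$, with $\|D^2\varphi_k\|\le\|D^2\varphi\|$.
\end{itemize}
Applying Step 2 to $u_k$ produces $l_{k+1}$. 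The coefficients of the $l_k$ form geometric series, so they converge to $l$ with $|a|+|b|\le C_0$. Interpolating between dyadic scales gives the bound for all $r\le\rho$.

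\textbf{Main obstacle.} The delicate point is the Hamiltonian term under rescaling. After scaling, its coefficient is
$$\mathcal{M}_2\,\rho^{k(1-\alpha)}\rho^{k\alpha m}/\Phi(\rho^{k\alpha}),$$
and it must satisfy $\mathcal{M}_2(1+|\xi_k|^{(m-i)_+})\le\epsilon_0$ uniformly in $k$, even though $|\xi_k|$ may blow up like $\rho^{-k\alpha}$. My first attempt is to use that $|\xi_k|^{m-i}$ and the scaling factor $\rho^{k(\alpha m-\alpha i)}$ cancel, since $|Dl_k|\le C_0$, together with $m\le 1+i(\Phi)$. This should keep the product bounded by $\rho^{k(1-\alpha(1+s-i))}\epsilon_0$.
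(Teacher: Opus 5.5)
Your proposal is correct and follows the paper's route. You prove the approximation Lemma~\ref{Section4:lem1} by compactness from the estimates of Section~\ref{Section 3}, take the first step as in Lemma~\ref{Section4:lem2}, and iterate as in Lemma~\ref{Section4:lem3} with $\xi_k=\rho^{-k\alpha}b_k$. Your Hamiltonian estimate reproduces the paper's bound $\mathcal{M}_{2;k}(1+|\xi_k|^{(m-i(\Phi))_+})\le L\epsilon_0\rho^{k[1-\alpha(1+s(\Phi)-i(\Phi))]}(1+|b_k|^{m-i(\Phi)})$; its exponent is nonnegative because $i(\Phi)\ge 0$ and $\alpha\le\frac{1}{1+s(\Phi)}$, not because $m\le 1+i(\Phi)$.

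Two of your choices are slight improvements on the written proof. First, for unbounded $\xi_j$ with $m>i(\Phi)$ you invoke Proposition~\ref{Section3:Prop3}; the paper's appeal to Proposition~\ref{Section3:Thm5} there falls outside that proposition's hypothesis $m\le i(\Phi)$. Second, the tangential matching of $Dl_k$ with $Dg$ need not be arranged: it follows by interpolation from $|g-l_k|\le\rho^{k(1+\alpha)}$ on $\partial\Omega\cap B_{\rho^k}$ together with $\alpha<\beta_g$.
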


\begin{lemma}[{Approximation lemma: degenerate case}]\label{Section4:lem1}
Suppose that the assumptions \hyperref[A1]{\bf (A1)}--\hyperref[A6]{\bf (A6)} are in force with $ i(\Phi) \geq 0 $ and $ \nu_{0} = \nu_{1} =1 $. Let $ \xi \in \mathbb{R}^{n} $ be an arbitrary vector and $ u $ be a viscosity solution to
\begin{equation*}
\left\{
     \begin{alignedat}{2}
         \Phi(|Du+\xi|,y) F(D^{2}u,y)+ H(|Du+\xi|, y)  & = f         \quad   &&   \text{in} \quad  B_{1} \cap \{y_{n} > \varphi(y')\}     ,    \\
          u   &  = g    \quad  &&   \text{on} \quad  B_{1} \cap \{y_{n} = \varphi(y')\} ,        \\
     \end{alignedat}
     \right.
\end{equation*}
satisfying
\begin{equation*}
  ||u||_{L^{\infty}(B_{1}\cap \{y_{n}>\varphi(y')\})} \leq 1, \quad ||g||_{C^{1,\beta_{g}}(B_{1} \cap \{y_{n} = \varphi(y')\} )} \leq 1.
\end{equation*}
Then for any $ \sigma > 0 $, there exists a constant $ \delta_{0} = \delta_{0}(n, \lambda, \Lambda, L, i(\Phi), \sigma) > 0 $ such that if
\begin{equation*}
  \max \bigg\{||f||_{L^{\infty}(B_{1}\cap \{y_{n}>\varphi(y')\})}, ||\mathrm{osc}_{F}||_{L^{\infty}(B_{1}\cap \{y_{n}>\varphi(y')\})}, \mathcal{M}_{1}, \mathcal{M}_{2} \big(1+|\xi|^{(m-i(\Phi))_{+}}\big)  \bigg\} \leq \delta_{0},
\end{equation*}
then we can find a function $ h$ of homogeneous $ (\lambda, \Lambda)$-uniform elliptic equation
\begin{equation}\label{Section4:eq1}
\left\{
     \begin{alignedat}{2}
       F(D^{2}h) & = 0       \quad    && \text{in} \ \ B_{\frac{4}{5}} \cap \{y_{n} > \varphi(y')\}    ,        \\
          h(y)  & = g(y)    \quad   &&   \text{on}   \ \  B_{\frac{4}{5}}  \cap \{y_{n} = \varphi(y')\},        \\
     \end{alignedat}
     \right.
\end{equation}
such that
\begin{equation*}
  ||u-h||_{L^{\infty}({B_{1/2}\cap \{y_{n}>\varphi(y')\}})} \leq \sigma.
\end{equation*}
\end{lemma}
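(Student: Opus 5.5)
We argue by contradiction combined with compactness, following the standard blueprint of \cite{IS13, BBLL24b}. Suppose the lemma fails for some $\sigma_0>0$. Then there are sequences $\xi_j\in\mathbb{R}^n$, operators $F_j$ (uniformly $(\lambda,\Lambda)$-elliptic), degeneracies $\Phi_j$ satisfying \hyperref[A3]{\bf (A3)} with the same $i(\Phi),s(\Phi),L$ and $\Phi_j(1,\cdot)=1$, Hamiltonians $H_j$, data $f_j,g_j$ with $\|g_j\|_{C^{1,\beta_g}}\le 1$, and viscosity solutions $u_j$ with $\|u_j\|_\infty\le 1$. These satisfy
\[
\max\big\{\|f_j\|_\infty,\|\mathrm{osc}_{F_j}\|_\infty,\mathcal{M}_{1,j},\mathcal{M}_{2,j}(1+|\xi_j|^{(m-i(\Phi))_+})\big\}\le \tfrac1j,
\]
and yet $\|u_j-h\|_{L^\infty(B_{1/2}\cap\{y_n>\varphi\})}>\sigma_0$ for every solution $h$ of \eqref{Section4:eq1} with boundary data $g_j$.

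\textbf{Step 1: equicontinuity up to the boundary.} We need a modulus of continuity for $u_j$ on $\overline{B_{4/5}\cap\{y_n>\varphi\}}$ that is uniform in $j$ and in $\xi_j$. This is where the case split of Section~\ref{Section 3} is used.
\begin{itemize}
\item If $i(\Phi)<m\le 1+i(\Phi)$, the smallness of $\mathcal{M}_{2,j}(1+|\xi_j|^{m-i(\Phi)})$ is exactly hypothesis \eqref{Section3:eq8}. Proposition~\ref{Section3:Prop3} then gives a uniform $C^{0,\gamma}$ bound.
\item If $0<m\le i(\Phi)$, we split into two subcases. When $|\xi_j|\le A_0$, Proposition~\ref{Section3:Thm7} gives a uniform Hölder bound. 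When $|\xi_j|>A_0$, Proposition~\ref{Section3:Thm5} gives a uniform Lipschitz bound.
\end{itemize}
In both cases the constants depend only on universal quantities and on $\mathrm{Lip}_g\le 1$. Arzelà–Ascoli then yields a subsequence with $u_j\to u_\infty$ uniformly on $\overline{B_{4/5}\cap\{y_n>\varphi\}}$. Passing to a further subsequence, $g_j\to g_\infty$ in $C^1$ and $F_j\to F_\infty$ locally uniformly. The sequence $F_j(\cdot,y)$ becomes $y$-independent in the limit because $\mathrm{osc}_{F_j}\to0$.

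\textbf{Step 2: identify the limit equation.} Since $f_j\to0$ and $H_j\to0$ uniformly, Lemma~\ref{Section2:lem2} would give $F_\infty(D^2u_\infty)=0$ with $u_\infty=g_\infty$ on the flat part. The delicate point is that $\xi_j$ may be unbounded, so the stability lemma (which assumes $\xi_j\to\xi_\infty$) does not apply directly. We handle this in two subcases.
\begin{itemize}
\item If $\xi_j$ is bounded, extract $\xi_j\to\xi_\infty$ and apply Lemma~\ref{Section2:lem2}. The limit satisfies $\Phi_\infty(|Du_\infty+\xi_\infty|)F_\infty(D^2u_\infty)=0$. Since $i(\Phi)\ge0$ and the Cutting Lemma~\ref{Section 2:lem1} is applied after the translation $u_\infty+\xi_\infty\cdot y$ (as in \cite{IS13}), we obtain $F_\infty(D^2u_\infty)=0$.
\item If $|\xi_j|\to\infty$, take a test function $\psi$ touching $u_\infty$ at $y_0$. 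Then $|D\psi+\xi_j|\to\infty$ near the touching points. Dividing the equation by $\Phi_j(|D\psi+\xi_j|)\gtrsim |D\psi+\xi_j|^{i(\Phi)}/L$ (since $i(\Phi)\ge0$ and $\Phi_j(1)=1$), the terms $f_j/\Phi_j$ and $H_j/\Phi_j$ tend to $0$, because the growth is controlled by $\mathcal{M}_{2,j}|\xi_j|^{(m-i(\Phi))_+}\to0$. Hence $F_\infty(D^2\psi(y_0))\ge0$ (respectively $\le0$).
\end{itemize}

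\textbf{Step 3: conclude.} Let $h_j$ solve \eqref{Section4:eq1} with data $g_j$. Uniqueness for the uniformly elliptic Dirichlet problem and stability under perturbation of the boundary data, using a barrier as in Lemma~\ref{Section3:lemma2}, give $h_j\to u_\infty$ uniformly on $B_{1/2}$. This contradicts $\|u_j-h_j\|>\sigma_0$ for $j$ large. A technical point is that \eqref{Section4:eq1} is written with the fixed data $g$ and operator $F$. We therefore take $h$ to solve the limit problem, and absorb the discrepancy $\|g_j-g_\infty\|$ via the ABP estimate of Theorem~\ref{Section 2:thm1}.

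\textbf{Main obstacle.} We expect the hardest step to be Step 1 in the regime $0<m\le i(\Phi)$ with $|\xi_j|$ near the threshold $A_0$. There we must ensure that $A_0$ and the two regularity constants are uniform in $j$. We would fix $A_0$ from Proposition~\ref{Section3:Thm5} first, with $\epsilon_0=1$, and then invoke Proposition~\ref{Section3:Thm7} with that same $A_0$.
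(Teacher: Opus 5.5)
Your argument is correct and uses the same contradiction--compactness scheme as the paper. Your bookkeeping is more careful at exactly the points where the paper's proof is thin.

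The paper splits only on whether $\{\xi_j\}$ is bounded. For bounded $\xi_j$ it passes to $u_j+y\cdot\xi_j$ and uses the $\xi=0$ H\"older estimate of Remark~\ref{Section3:rmk3}. For $|\xi_j|\to\infty$ it cites Proposition~\ref{Section3:Thm5} together with Lemma~\ref{Section2:lem2}. However, Proposition~\ref{Section3:Thm5} is stated only for $0<m\le i(\Phi)$, and Lemma~\ref{Section2:lem2} assumes $\xi_j\to\xi_\infty$.

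Your argument fills both holes. First, you split according to $m$ versus $i(\Phi)$ and use Proposition~\ref{Section3:Prop3} in the range $i(\Phi)<m\le 1+i(\Phi)$, where \eqref{Section3:eq8} is precisely the smallness of $\mathcal{M}_{2,j}(1+|\xi_j|^{m-i(\Phi)})$. Second, you identify the limit directly when $|\xi_j|\to\infty$ by dividing by $\Phi_j\ge|D\psi+\xi_j|^{i(\Phi)}/L$.

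The paper's translation $u_j+y\cdot\xi_j$ also removes your ``main obstacle''. For $|\xi_j|\le A_0$ it reduces to $\xi=0$, with bounds depending only on $A_0$ after normalizing by $1+A_0$. So you need not match the constants of Propositions~\ref{Section3:Thm5} and~\ref{Section3:Thm7}.

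Finally, the paper simply takes $h=u_\infty$ with data $g_\infty$ and ignores the mismatch with $g$ in \eqref{Section4:eq1}. You are right to address this.

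Two points need tidying.

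First, in the bounded case there need not be any limit $\Phi_\infty$. What you actually use is $\Phi_j(t,\cdot)\ge\min\{t^{i(\Phi)},t^{s(\Phi)}\}/L$. This gives $F_\infty(D^2\psi(y_0))\ge0$ (resp.\ $\le0$) whenever $D\psi(y_0)\neq-\xi_\infty$, and then the cutting argument of \cite{IS13} applies.

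Second, Step 3 as written is underdetermined. Equation \eqref{Section4:eq1} prescribes nothing on $\partial B_{4/5}\cap\{y_n>\varphi\}$, so uniqueness does not fix $h_j$. Also, the ABP bound you invoke concerns the degenerate equation, not differences of solutions of $F_\infty=0$. Instead:
\begin{itemize}
\item set $G_j(y):=g_j(y',\varphi(y'))$;
\item solve, by Perron's method, $F_\infty(D^2h_j)=0$ in $B_{4/5}\cap\{y_n>\varphi\}$ with the continuous datum $h_j=u_\infty+G_j-G_\infty$ on the whole boundary, which equals $g_j$ on the flat part since $u_\infty=g_\infty$ there;
\item compare $h_j$ with $u_\infty\pm\|g_j-g_\infty\|_\infty$, which solve the same $x$-independent equation.
\end{itemize}
This gives $\|h_j-u_\infty\|_\infty\le\|g_j-g_\infty\|_\infty\to0$, and hence the contradiction.
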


\begin{proof}
We argue by contradiction, suppose that the conclusion does not hold, then there exist sequences $ \{\Phi_{j}\}_{j \in \mathbb{N}}, \{\xi_{j}\}_{j \in \mathbb{N}}, \{H_{j}\}_{j \in \mathbb{N}}, \{u_{j}\}_{j \in \mathbb{N}} $, $ \{F_{j}\}_{j \in \mathbb{N}}, \{f_{j}\}_{j \in \mathbb{N}}, \{g_{j}\}_{j \in \mathbb{N}}$, $ \{\mathcal{M}_{1,j} \}_{j \in \mathbb{N}}$, $ \{\mathcal{M}_{2,j}\}_{j \in \mathbb{N}} $ and a positive number $ \sigma_{0}  $ such that, for every $ j \in \mathbb{N} $, we have

\label{D1} $ {(D1)} $. $ F_{j}: \mathrm{Sym}(n) \rightarrow \mathbb{R}  $ is a $ (\lambda, \Lambda) $-elliptic operator;

\label{D2}  $ {(D2)} $. $ f_{j} \in C^{0}(B_{1}\cap \{y_{n}>\varphi(y')\}) $ with
$$ \max \{||f_{j}||_{L^{\infty}(B_{1}\cap \{y_{n}>\varphi(y')\})}, ||\mathrm{osc}_{F_{j}}||_{L^{\infty}(B_{1}\cap \{y_{n}>\varphi(y')\})} \} \leq 1/j; $$

\label{D3}  $ {(D3)} $. \(\Phi_j \in C^{0}([0, \infty) \times B_1, [0, \infty))\) such that the map \(t \mapsto \frac{\Phi_j(t,x)}{t^{i(\Phi)}}\) is almost non-decreasing and the map \(t \mapsto \frac{\Phi_j(t,x)}{t^{s(\Phi)}}\) is almost non-increasing with the same constant \(L \geq 1\), and \(\Phi_j(1,y) = 1\) for all \(y \in B_1\);

\label{D4}  $ {(D4)} $. \(H_j : \mathbb{R}^n \times B_1 \to \mathbb{R}\) is continuous and there exist constants \(\mathcal{M}_{1,j}, \mathcal{M}_{2,j} > 0\) such that
\begin{equation*}
  |H_{j}(t,x)| \leq \mathcal{M}_{1,j} + \mathcal{M}_{2,j}|t|^m \quad  \text{for every} \quad t \in \mathbb{R}^n, x \in B_{1},
\end{equation*}
 and
\begin{equation*}
   \max \{ \mathcal{M}_{1,j}, \mathcal{M}_{2,j} \big(1+|\xi_{j}|^{(m-i(\Phi))_{+}}\big)   \} \leq 1/j;
\end{equation*}

\label{D5} $ {(D5)} $. $ u_{j} \in C^{0}(B_{1}\cap \{y_{n}>\varphi(y')\}) $ with $ ||u_{j}||_{L^{\infty}(B_{1}\cap \{y_{n}>\varphi(y')\})} \leq 1 $ solves the equation
\begin{equation*}
\left\{
     \begin{alignedat}{2}
        \Phi_{j}(|Du_{j}+\xi_{j}|,y) F_{j}(D^{2}u_{j},y)+ H_{j}(|Du_{j}+\xi_{j}|, y) & = f_{j}(y)        \quad  &&  \text{in} \ \ B_{1} \cap \{y_{n} > \varphi(y')\}    ,    \\
          u_{j}(y)  & = g_{j}(y)    \quad  && \text{on}   \ \  B_{1}  \cap \{y_{n} = \varphi(y')\},        \\
     \end{alignedat}
     \right.
\end{equation*}
with $ ||g_{j}||_{C^{1,\beta_{g}}(B_{1} \cap \{y_{n} = \varphi(y')\} )} \leq 1 $, but
\begin{equation}\label{Section4:eq2}
||u_{j}-h||_{L^{\infty}({B_{1/2}\cap \{y_{n}>\varphi(y')\}})} \geq \sigma_{0}
\end{equation}
for any $ h $ fulfilling \eqref{Section4:eq1}.

The condition \hyperref[D1]{(D1)} implies that $ F_{j} \rightarrow F_{\infty} $ for some uniformly elliptic operator $ F_{\infty}$. Analogously, from  \hyperref[D5]{(D5)} it is readily seen that $ g_{j} \rightarrow g_{\infty} $ uniformly. In addition, from \hyperref[D2]{(D2)} and \hyperref[D4]{(D4)}, we can easily see that $ f_{j} $ and $ \mathcal{M}_{k,j}, k=1,2 $ converge uniformly to $ 0$. In what follows, we distinguish two cases.

\smallskip
\noindent
{\boxed{\text{Case 1}.}} If $ \{\xi_{j}\}_{j \in \mathbb{N}} $ is bounded sequence, then up to a subsequence, we have that $ \xi_{j} \rightarrow  \xi_{\infty}  $ for some $ \xi_{\infty} \in \mathbb{R}^{n} $. Now we take a sequence $ \widetilde{u_{j}}(y):= u_{j}(y) + y\cdot \xi_{j}  $, it follows that $ \widetilde{u_{j}} $ solves
\begin{equation*}
\left\{
     \begin{alignedat}{2}
       \Phi_{j}(|D\widetilde{u_{j}}|,y) F_{j}(D^{2}\widetilde{u_{j}},y)+ H_{j}(|\widetilde{u_{j}}|, y) & = f_{j}(y) \quad && \text{in} \ \ B_{1} \cap \{y_{n} > \varphi(y')\} , \\
      \widetilde{u_{j}}(y) & = \widetilde{g_{j}}(y) \quad && \text{on} \ \ B_{1} \cap \{y_{n} = \varphi(y')\} ,
     \end{alignedat}
     \right.
\end{equation*}
where $ \widetilde{g_{j}}(y):= g_{j}(y)  + y\cdot \xi_{j}  $. Applying Remark \ref{Section3:rmk3} for $ \widetilde{u_{j}} $ and then using Arzel$ \grave{a}$-Ascoli compactness criterium, we give $ u_{j} \rightarrow u_{\infty} $ uniformly in $ B_{r} \cap \{y_{n} > \varphi(y')\} $ for any $ 0 < r <1 $. By the stability result of viscosity solution (Lemma \ref{Section2:lem2}), $ u_{\infty} $ satisfies
 \begin{equation*}
\left\{
     \begin{alignedat}{2}
       F(D^{2}u_{\infty}) &= 0       \quad   &&  \text{in} \ \ B_{\frac{4}{5}} \cap \{y_{n} > \varphi(y')\}    ,        \\
          u_{\infty}(y)  &= g_{\infty}(y)    \quad  &&  \text{on}   \ \  B_{\frac{4}{5}}  \cap \{y_{n} = \varphi(y')\}.        \\
     \end{alignedat}
     \right.
\end{equation*}
 which leads to a contradiction with \eqref{Section4:eq2} provided that we choose $ h = u_{\infty} , g = g_{\infty}, F = F_{\infty}$.

\smallskip
\noindent
{\boxed{\text{Case 2}.}} If $ \{\xi_{j}\}_{j \in \mathbb{N}} $ is unbounded sequence, we may assume that $ |\xi_{j}| \rightarrow \infty $ and $ |\xi_{j}| > A_{0} $, where $ A_{0} $ is the constant appearing in Proposition \ref{Section3:Thm5}. Applying Proposition \ref{Section3:Thm5} for $ u_{j}$ and exploiting the stability result of viscosity solution again (Lemma \ref{Section2:lem2}), we also obtain
\begin{equation*}
\left\{
     \begin{alignedat}{2}
       F(D^{2}u_{\infty}) &= 0       \quad   &&  \text{in} \ \ B_{\frac{4}{5}} \cap \{y_{n} > \varphi(y')\}    ,        \\
          u_{\infty}(y)  &= g_{\infty}(y)    \quad  &&  \text{on}   \ \  B_{\frac{4}{5}}  \cap \{y_{n} = \varphi(y')\},        \\
     \end{alignedat}
     \right.
\end{equation*}
which gives a contradiction as well, similar to that in {\em Case 1}.
\end{proof}

In the sequel, we establish a first step in a geometric regularity approach via a recursive iterative process.

\begin{lemma}[{First step}]
\label{Section4:lem2}
Let $ \alpha $ be chosen to fulfill
\begin{equation*}
  \alpha \in (0,\alpha_{0}) \cap \bigg(0, \frac{1}{1+s(\Phi)}\bigg] \cap (0,\beta_{g}).
\end{equation*}
Suppose that the assumptions \hyperref[A1]{\bf (A1)}--\hyperref[A6]{\bf (A6)} are in force with $ i(\Phi) \geq 0 $ and $ \nu_{0} = \nu_{1} =1 $, there exist $ \epsilon_{0} \in (0,1) $ and $ \rho \in (0,\frac{1}{2}) $ depending on $ n, \lambda, \Lambda, \alpha, L, m $ and $ i(\Phi) $ such that for any $ \xi \in \mathbb{R}^{n} $ and $ u $ a viscosity solution to \eqref{Section3:eq1}, the following holds: if
\begin{equation*}
  ||u||_{L^{\infty}(B_{1}\cap \{y_{n}>\varphi(y')\})} \leq 1
\end{equation*}
and
\begin{equation*}
  \max \bigg\{||f||_{L^{\infty}(B_{1}\cap \{y_{n}>\varphi(y')\})}, ||\mathrm{osc}_{F}||_{L^{\infty}(B_{1}\cap \{y_{n}>\varphi(y')\})}, \mathcal{M}_{1}, \mathcal{M}_{2} \big(1+|\xi|^{(m-i(\Phi))_{+}}\big)  \bigg\} \leq \epsilon_{0}.
\end{equation*}
then there exists an affine function $ l_{1}(y):= a_{1}+ b_{1}\cdot y $($ a \in \mathbb{R}$ and $ b \in \mathbb{R}^{n}$) such that
  \begin{equation*}
  ||u-l_{1}||_{L^{\infty}(B_{\rho}\cap \{y_{n}>\varphi(y')\})} \leq \rho^{1+\alpha},
\end{equation*}
and
\begin{equation*}
  |a_{1}| + |b_{1}| \leq \mathrm{C}(n,\lambda, \Lambda).
\end{equation*}
\end{lemma}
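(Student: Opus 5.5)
The plan is to combine the approximation Lemma~\ref{Section4:lem1} with the boundary $C^{1,\alpha_0}$ regularity of the limiting homogeneous problem. By the standard boundary Krylov--Safonov theory for uniformly elliptic operators on $C^{2}$ domains with $C^{1,\beta_g}$ data, any solution $h$ of \eqref{Section4:eq1} with $\|h\|_{L^\infty}\le 2$ and $\|g\|_{C^{1,\beta_g}}\le 1$ is $C^{1,\bar\alpha}$ up to the flat-ish boundary in $B_{1/2}\cap\{y_n>\varphi(y')\}$, where $\bar\alpha:=\min\{\alpha_0^-,\beta_g\}>\alpha$. This yields a universal constant $\mathrm{C}_h=\mathrm{C}_h(n,\lambda,\Lambda,\|D^2\varphi\|_\infty)$ such that
\begin{equation*}
\sup_{B_r\cap\{y_n>\varphi(y')\}}|h(y)-h(0)-Dh(0)\cdot y|\le \mathrm{C}_h r^{1+\bar\alpha},\qquad |h(0)|+|Dh(0)|\le \mathrm{C}_h,
\end{equation*}
for all $0<r\le 1/4$. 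The $L^\infty$ bound on $h$ follows from the maximum principle, or from $\|u\|_\infty\le 1$ together with closeness to $u$.

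The key steps are as follows. First, choose $\rho\in(0,1/4)$ so small that $\mathrm{C}_h\rho^{1+\bar\alpha}\le \frac12\rho^{1+\alpha}$. This is possible because $\bar\alpha>\alpha$, and it fixes $\rho$ depending only on the universal data and $\alpha$. Second, set $\sigma:=\frac12\rho^{1+\alpha}$ and apply Lemma~\ref{Section4:lem1} with this $\sigma$. This produces $\delta_0$, and we set $\epsilon_0:=\delta_0$. Under the smallness hypothesis of the statement, there is then an $h$ solving \eqref{Section4:eq1} with $\|u-h\|_{L^\infty(B_{1/2}\cap\{y_n>\varphi\})}\le\sigma$. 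Third, define $l_1(y):=h(0)+Dh(0)\cdot y$, so that $a_1=h(0)$ and $b_1=Dh(0)$. The triangle inequality on $B_\rho\cap\{y_n>\varphi(y')\}$ then gives
\begin{equation*}
\|u-l_1\|_{L^\infty}\le \|u-h\|_{L^\infty}+\|h-l_1\|_{L^\infty}\le \tfrac12\rho^{1+\alpha}+\tfrac12\rho^{1+\alpha}=\rho^{1+\alpha},
\end{equation*}
and $|a_1|+|b_1|\le \mathrm{C}_h$ is bounded by a constant depending only on $n,\lambda,\Lambda$ (and the fixed geometry of $\varphi$).

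The main obstacle is the first ingredient: the uniform boundary $C^{1,\bar\alpha}$ estimate for $h$, with constants independent of the particular limiting $F_\infty$ and $g_\infty$. A related difficulty is making sure $\bar\alpha$ can be taken strictly larger than $\alpha$. For this I would invoke the boundary regularity for $(\lambda,\Lambda)$-elliptic equations in $C^{1,1}$ (hence $C^2$) domains with $C^{1,\beta_g}$ boundary data, as in \cite{BD14} or Silvestre--Sirakov. That theory gives an exponent depending on $n,\lambda,\Lambda$, and $\beta_g$, and it is precisely why the hypothesis $\alpha<\min\{\alpha_0,\beta_g\}$ is imposed. Note also that $h(0)=g(0)$, which is consistent with the boundary condition. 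The restriction $\alpha\le 1/(1+s(\Phi))$ plays no role at this first step; it only enters later, when the iteration rescales $u$ and must keep the rescaled equation within the same class of hypotheses.
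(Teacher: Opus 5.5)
Your proposal is correct and follows the paper's argument. Both apply Lemma~\ref{Section4:lem1} with $\sigma=\frac12\rho^{1+\alpha}$, take $l_1(y)=h(0)+Dh(0)\cdot y$ from the boundary regularity of the limiting homogeneous problem, choose $\rho$ so that the error $\mathrm{C}\rho^{1+\bar\alpha}$ is at most $\frac12\rho^{1+\alpha}$, and conclude by the triangle inequality. Your use of the exponent $\bar\alpha=\min\{\alpha_0^-,\beta_g\}$ instead of the paper's $\alpha_0$ is a slightly more careful version of the same step: the boundary data is only $C^{1,\beta_g}$, and it is the hypothesis $\alpha<\beta_g$ that makes this exponent strictly exceed $\alpha$.
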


\begin{proof}
From Lemma \ref{Section4:lem1}, we have that for $ \rho $ small,
\begin{equation}\label{Section4:eq17}
  ||u-h||_{L^{\infty}({B_{\rho}\cap \{y_{n}>\varphi(y')\}})} \leq  \sigma
\end{equation}
for some $ \sigma $, to be determined later. In terms of the pointwise boundary regularity of \eqref{Section4:eq1}, there exist affine function $ l_{1} $ and universal constant $ \mathrm{C} >0 $ such that
\begin{equation}\label{Section4:eq18}
  \sup_{y \in B_{\rho}\cap \{y_{n}>\varphi(y')\}}|h(y) - l_{1}(y)| \leq \mathrm{C} \rho^{1+\alpha_{0}},  \quad  l_{1}(y):= h(0) + Dh(0)\cdot y.
\end{equation}
To proceed, a combination of the triangle inequality with \eqref{Section4:eq17} and \eqref{Section4:eq18} yields
\begin{align}\label{Section4:eq19}
\begin{split}
\sup_{y \in B_{\rho}\cap \{y_{n}>\varphi(y')\}}|u(y) - l_{1}(y)| & \leq \sup_{y \in B_{\rho}\cap \{y_{n}>\varphi(y')\}}|u(y) - h(y)| + \sup_{y \in B_{\rho}\cap \{y_{n}>\varphi(y')\}}|h(y) - l_{1}(y)|  \\
& \leq  \sigma +  \mathrm{C} \rho^{1+\alpha_{0}}.
\end{split}
\end{align}
Note that $ 0< \alpha < \alpha_{0} $, then we can choose $ 0 < \rho <1 $ small enough such that
\begin{equation}\label{Section4:eq20}
 \mathrm{C} \rho^{\alpha_{0}-\alpha} \leq \frac{1}{2}.
\end{equation}
Meanwhile, we set
$$ \sigma = \frac{1}{2} \rho^{1+\alpha}, $$
which, together with \eqref{Section4:eq19} and \eqref{Section4:eq20}, reveals that
\begin{equation*}
  \sup_{y \in B_{\rho}\cap \{y_{n}>\varphi(y')\}}|u(y) - l_{1}(y)| \leq  \rho^{1+\alpha}.
\end{equation*}
Here $ a_{1} = h(0) $ and $ b_{1} = Dh(0) $. Thus we finish the proof of Lemma \ref{Section4:lem2}.
\end{proof}

Next, we present $ j $th Iterative schemes to derive the desired sharp boundary $ C^{1,\alpha} $ regularity.

\begin{lemma}[{Iterative schemes--$j$th}]\label{Section4:lem3}
Let $ \alpha $ be fixed as in Lemma \ref{Section4:lem2}. Suppose that the assumptions \hyperref[A1]{\bf (A1)}--\hyperref[A6]{\bf (A6)} are in force with $ i(\Phi) \geq 0 $ and $ \nu_{0} = \nu_{1} =1 $, there exist $ \epsilon_{0} \in (0,1) $ and $ \rho \in (0,\frac{1}{2}) $ depending on $ n, \lambda, \Lambda, \alpha, L, m $ and $ i(\Phi) $ such that for any $ \xi \in \mathbb{R}^{n} $ and $ u $ a viscosity solution to
\begin{equation*}
\left\{
     \begin{alignedat}{2}
         \Phi(|Du|,y) F(D^{2}u,y)+ H(|Du|, y)  & = f         \quad   &&   \text{in} \quad  B_{1} \cap \{y_{n} > \varphi(y')\}     ,    \\
          u   &  = g    \quad  &&   \text{on} \quad  B_{1} \cap \{y_{n} = \varphi(y')\} ,        \\
     \end{alignedat}
     \right.
\end{equation*}
with
\begin{equation*}
  ||u||_{L^{\infty}(B_{1}\cap \{y_{n}>\varphi(y')\})} \leq 1
\end{equation*}
and
\begin{equation*}
  \max \bigg\{||f||_{L^{\infty}(B_{1}\cap \{y_{n}>\varphi(y')\})}, ||\mathrm{osc}_{F}||_{L^{\infty}(B_{1}\cap \{y_{n}>\varphi(y')\})}, \mathcal{M}_{1}, \mathcal{M}_{2} \bigg\} \leq \epsilon_{0},
\end{equation*}
then for any $ j \in \mathbb{R}^{n} $, there exists a sequence of affine functions $ \{l_{j}(y)\}_{j \in \mathbb{N}} $, where $ l_{j}(y):= a_{j} + b_{j} \cdot y $, fulfilling
\begin{equation}\label{Section4:eq21}
  ||u-l_{j}||_{L^{\infty}(B_{\rho^{j}}\cap \{y_{n}>\varphi(y')\})}  \leq \rho^{j(1+\alpha)}
\end{equation}
and
\begin{equation}\label{Section4:eq22}
|a_{j} - a_{j-1}|  +   \rho^{j-1}|b_{j} - b_{j-1}| \leq \mathrm{C} \rho^{(j-1)(1+\alpha)}
\end{equation}
where $ \mathrm{C}= \mathrm{C}(n, \lambda, \Lambda) $.
\end{lemma}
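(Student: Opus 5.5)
We argue by induction on $j$. The base case $j=1$ is exactly Lemma~\ref{Section4:lem2} applied with $\xi=0$, taking $l_0\equiv 0$; then \eqref{Section4:eq22} for $j=1$ reduces to $|a_1|+|b_1|\le \mathrm{C}(n,\lambda,\Lambda)$.

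For the inductive step, assume $l_1,\dots,l_j$ have been constructed and satisfy \eqref{Section4:eq21}--\eqref{Section4:eq22}. Define on $B_1\cap\{y_n>\varphi_j(y')\}$, with $\varphi_j(y'):=\rho^{-j}\varphi(\rho^j y')$,
\begin{equation*}
  u_j(y):=\frac{u(\rho^j y)-l_j(\rho^j y)}{\rho^{j(1+\alpha)}},\qquad \xi_j:=\rho^{-j\alpha}b_j .
\end{equation*}
By \eqref{Section4:eq21}, $\|u_j\|_{L^\infty}\le 1$. A direct computation, as in Remark~\ref{Section3:rmk1} with $r=\rho^j$ and $K=\rho^{j(1+\alpha)}$, shows that $u_j$ solves an equation of the form \eqref{Section3:eq1} with the following data:
\begin{equation*}
\begin{aligned}
  &\Phi_j(t,y):=\frac{\Phi(\rho^{j\alpha}t,\rho^j y)}{\Phi(\rho^{j\alpha},\rho^j y)},\qquad
  F_j(\mathrm{X},y):=\rho^{j(1-\alpha)}F\big(\rho^{j(\alpha-1)}\mathrm{X},\rho^j y\big),\\
  &H_j(t,y):=\frac{\rho^{j(1-\alpha)}H(\rho^{j\alpha}t,\rho^j y)}{\Phi(\rho^{j\alpha},\rho^j y)},\qquad
  f_j(y):=\frac{\rho^{j(1-\alpha)}f(\rho^j y)}{\Phi(\rho^{j\alpha},\rho^j y)},
\end{aligned}
\end{equation*}
and boundary datum $g_j:=(g(\rho^j\cdot)-l_j(\rho^j\cdot))/\rho^{j(1+\alpha)}$. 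Here $\Phi_j$ keeps the structure of \hyperref[A3]{\bf (A3)} with the same $L$ and $\Phi_j(1,\cdot)=1$, $F_j$ is $(\lambda,\Lambda)$-elliptic with $\mathrm{osc}_{F_j}\le \mathrm{osc}_F$, and $\|D^2\varphi_j\|_\infty\le\|D^2\varphi\|_\infty$.

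The key smallness check uses the almost-monotonicity in \hyperref[A3]{\bf (A3)}: $\Phi(\rho^{j\alpha},\cdot)\ge L^{-1}\rho^{j\alpha s(\Phi)}$. Hence
\begin{equation*}
  \|f_j\|_\infty\le L\,\rho^{j(1-\alpha-\alpha s(\Phi))}\|f\|_\infty\le L\epsilon_0,
\end{equation*}
precisely because $\alpha\le 1/(1+s(\Phi))$. Likewise,
\begin{equation*}
  |H_j(t,y)|\le L\rho^{j(1-\alpha(1+s(\Phi)))}\mathcal{M}_1+L\rho^{j(1-\alpha(1+s(\Phi))+\alpha m)}\mathcal{M}_2|t|^m ,
\end{equation*}
so both Hamiltonian constants stay $\le L\epsilon_0$. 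The quantity $\mathcal{M}_{2,j}(1+|\xi_j|^{(m-i(\Phi))_+})$ must also be controlled. Since $|b_j|\le \mathrm{C}$ uniformly by summing \eqref{Section4:eq22}, we have $|\xi_j|^{m-i(\Phi)}\lesssim\rho^{-j\alpha(m-i(\Phi))}$, which is absorbed by the factor $\rho^{j\alpha m}$ together with the gain $\rho^{j\alpha i(\Phi)}$ obtained by using $i(\Phi)$-monotonicity for the argument $\rho^{j\alpha}t$ near $|t|\sim|\xi_j|$. I expect this bookkeeping to be the main obstacle, since it must give a bound independent of $j$. If it fails, I would replace $\epsilon_0$ by $\epsilon_0/L$ at the outset and exploit $m\le 1+i(\Phi)$.

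For the boundary datum, $\|g_j\|_{C^{1,\beta_g}}\le 1$ follows from $\alpha<\beta_g$, using that $l_j$ matches the tangential Taylor data of $g$ at $0$ up to order $\rho^{j(1+\alpha)}$. With all hypotheses verified, we apply Lemma~\ref{Section4:lem2} to $u_j$ (it allows arbitrary $\xi$) and obtain $\tilde l(y)=\tilde a+\tilde b\cdot y$ with $\|u_j-\tilde l\|_{L^\infty(B_\rho)}\le\rho^{1+\alpha}$ and $|\tilde a|+|\tilde b|\le \mathrm{C}$. Scaling back, we set
\begin{equation*}
  l_{j+1}(y):=l_j(y)+\rho^{j(1+\alpha)}\tilde l(\rho^{-j}y),
\end{equation*}
so that $a_{j+1}-a_j=\rho^{j(1+\alpha)}\tilde a$ and $b_{j+1}-b_j=\rho^{j\alpha}\tilde b$. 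These give \eqref{Section4:eq21} at level $j+1$ and \eqref{Section4:eq22} directly, which closes the induction.
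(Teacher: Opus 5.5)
Your proposal is correct and follows essentially the paper's proof. The common steps are: induction starting from Lemma~\ref{Section4:lem2}; the rescaling $u_j$ with $\xi_j=\rho^{-j\alpha}b_j$; smallness of $f_j$ and $H_j$ via $\Phi(\rho^{j\alpha},\cdot)\ge L^{-1}\rho^{j\alpha s(\Phi)}$ and $\alpha\le\frac{1}{1+s(\Phi)}$; a uniform bound on $|b_j|$ from summing \eqref{Section4:eq22}; and $l_{j+1}=l_j+\rho^{j(1+\alpha)}\tilde l(\rho^{-j}\cdot)$. The paper applies Lemma~\ref{Section4:lem1} and repeats the first-step argument, which amounts to applying Lemma~\ref{Section4:lem2} directly as you do.

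The step you flagged as the main obstacle is immediate and needs no $i(\Phi)$-monotonicity. Multiply your bound for $\mathcal{M}_{2,j}$ by $|\xi_j|^{m-i(\Phi)}=|b_j|^{m-i(\Phi)}\rho^{-j\alpha(m-i(\Phi))}$ with $|b_j|\le \mathrm{C}/(1-\rho^{\alpha})$. This leaves the factor $\rho^{j[1-\alpha(1+s(\Phi)-i(\Phi))]}\le 1$, so one simply takes $\epsilon_0$ equal to the first-step threshold divided by $L\bigl(1+(\mathrm{C}/(1-\rho^{\alpha}))^{(m-i(\Phi))_+}\bigr)$, as the paper does.
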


\begin{proof}
First of all, such an assertion holds for $ j=1 $ by Lemma \ref{Section4:lem2}. Now we suppose that \eqref{Section4:eq21} and \eqref{Section4:eq22} hold true for $ j \geq 1 $, then we shall verify \eqref{Section4:eq21} and \eqref{Section4:eq22} for $ j+1 $. For this purpose, we define an auxiliary function $ u_{j}: B_{1}\cap \{y_{n}>\varphi(y')\} \rightarrow \mathbb{R} $ as
\begin{equation*}
  u_{j}(y):= \frac{u(\rho^{j}y)-l_{j}(\rho^{j}y)}{\rho^{j(1+\alpha)}},
\end{equation*}
then direct calculation yields that $ u_{j} $ solves
\begin{equation}
\label{Section4:eq23}
\left\{
     \begin{alignedat}{2}
         \Phi_{j}(|Du_{j}+\xi_{j}|,y) F(D^{2}u_{j},y)+ H(|Du_{j}+\xi_{j}|, y)  & = f_{j}         \quad   &&   \text{in} \quad  B_{1} \cap \{y_{n} > \varphi_{j}(y')\}     ,    \\
          u_{j}   &  = g_{j}    \quad  &&   \text{on} \quad  B_{1} \cap \{y_{n} = \varphi_{j}(y')\} ,         \\
     \end{alignedat}
     \right.
\end{equation}
where
\begin{equation*}
\left\{
     \begin{aligned}
& F_{j}(\mathrm{X}, y):= \rho^{j(1-\alpha)} F(\rho^{j(\alpha-1)}\mathrm{X}, \rho^{j}y), \quad  \Phi(t,y):= \frac{\Phi(\rho^{j\alpha}t, \rho^{j}y)}{\Phi(\rho^{j\alpha}, \rho^{j}y)};      \\
& H_{j}(t,y):= \frac{\rho^{j(1-\alpha)}}{\Phi(\rho^{j\alpha}, \rho^{j}y)} H(\rho^{j\alpha}t, \rho^{j}y) ;    \\
& f_{j}(y):=\frac{\rho^{j(1-\alpha)}}{\Phi(\rho^{j\alpha}, \rho^{j}y)} f(\rho^{j}y)     ,   \quad  g_{j}(y):= \frac{g(\rho^{j}y)-l_{j}(\rho^{j}y)}{\rho^{j(1+\alpha)}};    \\
& \xi_{j}:= b_{j} \rho^{-j\alpha} , \quad \text{and}  \quad  \varphi_{j}(y'):= \rho^{-j}\varphi(\rho^{j}y') .
\end{aligned}
     \right.
\end{equation*}
It can be easily check that
\begin{enumerate}[label=(\roman*)]
\item  $ F_{j} $ satisfies \hyperref[A1]{\bf (A1)} with the same constants $ (\lambda, \Lambda) $;

\item By induction assumption, it can be seen that $ ||u_{j}||_{L^{\infty}(B_{1}\cap \{y_{n}>\varphi_{j}(y')\})} \leq 1$;

\item Simple calculation yields
\begin{align*}
   \mathrm{osc}_{F_{j}}(y,0) & = \sup_{\mathrm{M} \in \mathrm{Sym}(n)\setminus\{0\}} \frac{|F_{j}(\mathrm{M},y)- F_{j}(\mathrm{M}, 0)|}{||\mathrm{M}||}  \\
  & = \sup_{\mathrm{M} \in \mathrm{Sym}(n)\setminus\{0\}} \frac{|F(\rho^{j(1-\alpha)}\mathrm{M}, \rho^{j}y)-F(\rho^{j(1-\alpha)}\mathrm{M}, 0)|}{||\rho^{j(\alpha-1)}\mathrm{M}||}  \\
&  = \mathrm{osc}_{F}(\rho^{j}y,0),
\end{align*}
which, together with the hypothesis of Lemma \ref{Section4:lem3}, leads to
\begin{align*}
||\mathrm{osc}_{F_{j}}||_{L^{\infty}(B_{1}\cap \{y_{n}>\varphi(y')\})} = ||\mathrm{osc}_{F}||_{L^{\infty}(B_{\rho^{j}}\cap \{y_{n}>\varphi(y')\})}  \leq  ||\mathrm{osc}_{F}||_{L^{\infty}(B_{1}\cap \{y_{n}>\varphi(y')\})} \leq \epsilon_{0};
\end{align*}

\item $ \Phi_{j} $ satisfies \hyperref[A3]{\bf (A3)} with the same constants $ (i(\Phi), s(\Phi)) $ and $ \Phi_{j}(y,1) \equiv 1 $;

\item By the virtue of the properties of $ \Phi_{j} $, \hyperref[A4]{\bf (A4)} and $ \rho \in (0,\frac{1}{2}) $, we derive
    \begin{equation*}
      |H_{j}(t,x)| \leq \frac{L\rho^{j(1-\alpha)}}{\rho^{j\alpha s(\Phi)}} (\mathcal{M}_{1}+ \mathcal{M}_{2} \rho^{j \alpha m}|t|^{m}) : = \mathcal{M}_{1;j}  +  \mathcal{M}_{2;j}|t|^{m}.
    \end{equation*}
Now we need to estimate the term $ \mathcal{M}_{2;j}(1+|\xi_{j}|^{(m-i(\Phi))_{+}})$. Recall that $ \xi_{j} = b_{j} \rho^{-j\alpha} $, then
\begin{equation}\label{Section4:eq24}
  \mathcal{M}_{2;j}(1+|\xi_{j}|^{(m-i(\Phi))_{+}}) \leq L \epsilon_{0} \rho^{j[1-\alpha(1+s(\Phi)-m)]-j\alpha(m-i(\Phi))_{+}} + (|b_{j}|^{(m-i(\Phi))_{+}}+1).
\end{equation}
We distinguish two cases.

\smallskip
\noindent
{\boxed{\text{Case 1}.}} If $ 0 < m \leq i(\Phi) $, using $ 1-\alpha(1+s(\Phi)-m) > 0 $, then we have
\begin{equation}\label{Section4:eq25}
  \mathcal{M}_{2;j}(1+|\xi_{j}|^{(m-i(\Phi))_{+}}) \leq 2L \epsilon_{0} \rho^{j[1-\alpha(1+s(\Phi)-m)]} \leq 2L \epsilon_{0};
\end{equation}
\smallskip
\noindent
{\boxed{\text{Case 2}.}} If $ i(\Phi) < m \leq 1+i(\Phi)$, by using $ 1-\alpha(1+s(\Phi)-i(\Phi)) > 0 $, then
\begin{equation}\label{Section4:eq26}
  \mathcal{M}_{2;j}(1+|\xi_{j}|^{(m-i(\Phi))_{+}}) \leq L \epsilon_{0} \rho^{j[1-\alpha(1+s(\Phi)-i(\Phi))] }(1+|b_{j}|^{m-i(\Phi)})  \leq L \epsilon_{0}(1+|b_{j}|^{m-i(\Phi)}).
\end{equation}
Besides, from \eqref{Section4:eq22} and $ \rho < \frac{1}{2} $, it follows
\begin{equation*}
  |b_{j}| \leq |b_{0}|  +  \sum_{k=1}^{j}|b_{k}-b_{k-1}| \leq \mathrm{C} \sum_{k=1}^{j} \rho^{\alpha(k-1)}  \leq \frac{\mathrm{C}}{1-\rho^{\alpha}}  \leq 2\mathrm{C},
\end{equation*}
which together with \eqref{Section4:eq26} to obtain
\begin{equation}\label{Section4:eq27}
   \mathcal{M}_{2;j}(1+|\xi_{j}|^{(m-i(\Phi))_{+}}) \leq L \epsilon_{0}(1+(2\mathrm{C})^{m-i(\Phi)}).
\end{equation}
We combine \eqref{Section4:eq25} and \eqref{Section4:eq27} to give that for $ 0 < m \leq 1+i(\Phi) $,
\begin{equation*}
  \mathcal{M}_{2;j}(1+|\xi_{j}|^{(m-i(\Phi))_{+}}) \leq L \epsilon_{0}(1+(2\mathrm{C})^{(m-i(\Phi))_{+}}) \leq \delta_{0},
\end{equation*}
provided $ \epsilon_{0} $ is small enough, where $ \delta_{0} $ occurs in the statement of Lemma \ref{Section4:lem1};

\item Applying the property of $ \Phi $ and $ 0 < \alpha \leq \frac{1}{1+ s(\Phi)} $, it follows
\begin{equation*}
  ||f_{j}||_{L^{\infty}(B_{1} \cap \{y_{n} > \varphi_{j}(y')\})}  \leq \frac{L\rho^{j(1-\alpha)}}{\rho^{j\alpha s(\Phi)}} ||f||_{L^{\infty}(B_{1} \cap \{y_{n} > \varphi_{j}(y')\})} \leq L \epsilon_{0} \leq \delta_{0};
\end{equation*}

\item It is easy to see $ \mathcal{M}_{1;j} \leq L \rho^{j[1-\alpha(1+s(\Phi))]} \mathcal{M}_{1} \leq L \epsilon_{0} \leq \delta_{0} $;

\item  $ ||D^{2}\varphi_{j}||_{L^{\infty}(B_{1}\cap \{y_{n}>\varphi_{j}(y')\})}  \leq \rho^{j}||D^{2}\varphi||_{L^{\infty}(B_{1}\cap \{y_{n}>\varphi(y')\})} \leq  ||D^{2}\varphi||_{L^{\infty}(B_{1}\cap \{y_{n}>\varphi(y')\})} $;

\item  A straightforward computation yields that for any $ y,z \in B_{1}  \cap \{y_{n} = \varphi_{j}(y')\} $
\begin{align*}
|Dg_{j}(y) - Dg_{j}(z)| & = \rho^{-j\alpha} |Dg(\rho^{j}y) - Dg(\rho^{j}z)|  \leq \rho^{j(\beta_{g}-\alpha)} ||g||_{C^{1,\beta_{g}}(B_{1}\cap \{y_{n}>\varphi(y')\})} |y-z|^{\beta_{g}}       \\
&  \leq ||g||_{C^{1,\beta_{g}}(B_{1}\cap \{y_{n}=\varphi(y')\})} |y-z|^{\beta_{g}}  \quad  \text{for} \  \rho  \ \text{small enough},
\end{align*}
then we have that
\begin{equation*}
  ||g_{j}||_{C^{1,\beta_{g}}(B_{1}\cap \{y_{n}>\varphi_{j}(y')\})} \leq ||g||_{C^{1,\beta_{g}}(B_{1}\cap \{y_{n}=\varphi(y')\})} \leq 1.
\end{equation*}
\end{enumerate}

Consequently, as discussed above, we arrive at
 \begin{equation*}
  \max \bigg\{||f_{j}||_{L^{\infty}(B_{1}\cap \{y_{n}>\varphi_{j}(y')\})}, ||\mathrm{osc}_{F_{j}}||_{L^{\infty}(B_{1}\cap \{y_{n}>\varphi_{j}(y')\})}, \mathcal{M}_{1}, \mathcal{M}_{2} \big(1+|\xi_{j}|^{(m-i(\Phi))_{+}}\big)  \bigg\} \leq \delta_{0},
\end{equation*}

Now we apply Lemma \ref{Section4:lem1} for $ u_{j} $ and then follow the argument in the first step (Lemma \ref{Section4:lem2}) to ensure the existence of an affine function $ \widehat{l}(y):= \widehat{a} + \widehat{b}\cdot y $ such that
\begin{equation*}
  ||u_{j}-\widehat{l}||_{L^{\infty}(B_{\rho}\cap \{y_{n}>\varphi(y')\})} \leq \rho^{1+\alpha} \quad  \text{and}  \quad  |\widehat{a}| + |\widehat{b}|  \leq \mathrm{C}(n, \lambda, \Lambda).
\end{equation*}

By scaling back, we conclude that
\begin{equation*}
  ||u(y)-l_{j+1}(y)||_{L^{\infty}(B_{\rho^{j+1}}\cap \{y_{n}>\varphi(y')\})}  \leq \rho^{(j+1)(1+\alpha)},
\end{equation*}
where
\begin{equation*}
\left\{
     \begin{aligned}
& l_{j+1}(y):= l_{j}(y)  +  \rho^{j(1+\alpha)} \widehat{l}(\rho^{-j}y);      \\
& a_{j+1} :=a_{j} + \rho^{j(1+\alpha)} \widehat{a} ;    \\
& b_{j+1} :=b_{j} +\rho^{j\alpha} \widehat{b}.
\end{aligned}
     \right.
\end{equation*}
This completes the proof of Lemma \ref{Section4:lem3}.
\end{proof}

Now we commence with the proof of Proposition \ref{Section4:prop1}.

\begin{proof}[{Proof of Proposition \ref{Section4:prop1}}]
By the standard argument, see \cite[Corollary 5.3]{HJMZ26a} or \cite[Corollary]{BSRR23}, the proof of Proposition \ref{Section4:prop1} is complete.
\end{proof}

Finally, we can give

\begin{proof}[{Proof of Theorem \ref{Thm1}}]
The proof is divided into two cases.

\smallskip
\noindent
{\boxed{\text{Case 1}.}} $ i(\Phi) \geq  0 $. The result of Theorem~\ref{Thm1} can be completed by combining Proposition \ref{Section4:prop1}, Theorem \ref{Section2:thm3} and covering $ \partial \Omega $ with a finite number of such neighborhoods.

\smallskip
\noindent
{\boxed{\text{Case 2}.}} $ -1 < i(\Phi) <0 $. From Proposition \ref{Section3:Prop4}, it can be readily seen
\begin{equation*}
  ||u||_{C^{0,1}((B_{3/4}\cap \{y_{n}>\varphi(y')\}))}  \leq \mathrm{C},
\end{equation*}
for a universal constant $ \mathrm{C} $. Then $ u $ is a viscosity solution to
\begin{equation*}
\left\{
     \begin{alignedat}{2}
         \widetilde{\Phi}(|Du|,y) F(D^{2}u,y)+ \widetilde{H}(|Du|, y)  & = \widetilde{f}         \quad   &&   \text{in} \quad  B_{3/4} \cap \{y_{n} > \varphi(y')\}     ,    \\
          u   &  = g    \quad  &&   \text{on} \quad  B_{3/4} \cap \{y_{n} = \varphi(y')\} ,        \\
     \end{alignedat}
     \right.
\end{equation*}
where
\begin{equation*}
\left\{
     \begin{aligned}
& \widetilde{\Phi}(t, y):= t^{-i(\Phi)} \Phi(t,y);      \\
& \widetilde{H}(t,y):= t^{-i(\Phi)} H(t, y);    \\
& \widetilde{f}(y):= t^{-i(\Phi)} f(y).
\end{aligned}
     \right.
\end{equation*}
Here $ \widetilde{\Phi} $ satisfies the assumption \hyperref[A3]{\bf (A3)} with the same constant $ L \geq 1 $, $ \widetilde{\Phi}(1,y) = 1 $ for all $ y \in B_{1} $, and
\begin{equation*}
  ||\widetilde{f}||_{L^{\infty}(B_{3/4}\cap \{y_{n}>\varphi(y')\})}  \leq  \mathrm{C}^{-i(\Phi)}||f||_{L^{\infty}(\Omega)}.
\end{equation*}
Thus, one can repeat the argument in the proof of Proposition \ref{Section4:prop1} to obtain the global $ C^{1,\alpha}$ estimate.
\end{proof}

\section{Proof of Theorem \ref{Thm2}: boundary $ C^{1, \beta'}$ regularity}\label{Section 5}
This section  is devoted to establishing the optimal boundary $ C^{1,\beta'} $ regularity of viscosity solutions to \eqref{Section1:eq5}. The proof is built upon a refined localized analysis and a geometric iteration scheme designed to capture the optimal oscillation decay of solutions near the boundary. Hereafter, for $ r > 0 $, we recall the notation
\begin{equation*}
  \Omega_{r}^{+}:= \Omega \cap B_{r}  \quad  \text{and}  \quad \Omega'_{r}:= \partial \Omega \cap B_{r}.
\end{equation*}

Here is the main result of this section.

\begin{theorem}
\label{Section5:Thm1}
Suppose that the assumptions \hyperref[A1]{\bf (A1)}, \hyperref[A2]{\bf (A2)}, \hyperref[A3a]{\bf (A3a)}, \hyperref[A4a]{\bf (A4a)} and \hyperref[A5a]{\bf (A5a)} hold. Let $ u $ be a viscosity solution to \eqref{Section1:eq5}, $ u(0) = 0 $, and $ \beta'$ be as in \eqref{Section1:eq6}. There exist small positive numbers $ \rho_{0} $ and $ \delta_{0} $, such that if
\begin{equation}\label{Section5:eq1}
  |Du(0)| \leq \delta_{0} \rho^{\beta'},
\end{equation}
for some $ 0 < \rho \leq \rho_{0} $, then it holds
\begin{equation}\label{Section5:eq2}
  \sup_{\Omega_{\rho}^{+}}|u(x)| \leq  \mathrm{C}A' \rho^{1+\beta'},
\end{equation}
where $ A' $ is as in Theorem \ref{Thm2}.
\end{theorem}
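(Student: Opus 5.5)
Normalize first, so that $A'\le 1$ in the smallness regime of Remark~\ref{Section3:rmk1}. That is, replace $u$ by $u/(\mathrm{C}A')$ after an intrinsic rescaling, so that $\|u\|_{L^\infty(\Omega_1^+)}\le 1$, $\|g\|_{C^{1,\beta_g}}\le 1$, and $\|h\|_\infty$, $\mathcal K_2$ and $\mathrm{osc}_F$ are at most a small $\epsilon$. The weights $\|h\|^{1/(1+p_{\min}-m)}$ and $\mathcal K_2^{1/(1+p_{\min})}$ in $A'$ are designed precisely so that the rescaled coefficients become small, using $|Du|^{p(x)}\gtrsim |Du|^{p_{\max}}$ when the gradient is at most $1$. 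The core of the argument is a discrete oscillation decay at the boundary point $0$. I claim there are $\rho_0$ and $\delta_0$ such that, for all $k$ with $\rho_0^k\ge\rho$,
\[
\sup_{\Omega^+_{\rho_0^{k}}}|u-Du(0)\cdot x|\le \rho_0^{k(1+\beta')},
\]
or more precisely $\sup_{\Omega^+_{\rho_0^k}} |u - \ell_k| \le \rho_0^{k(1+\beta')}$ for affine $\ell_k$ with $\ell_k(0)=0$ and slopes $b_k$ satisfying $|b_k-b_{k-1}|\lesssim\rho_0^{(k-1)\beta'}$.

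The first step is proved by compactness, as in Lemma~\ref{Section4:lem1}. If $\epsilon$ is small, then $u$ is $\sigma$-close in $\Omega^+_{1/2}$ to a solution $\mathfrak h$ of $F(D^2\mathfrak h,0)=0$ with boundary data $g$. Here I use Lemma~\ref{Section2:lem2}, the boundary Hölder/Lipschitz bounds of Section~\ref{Section 3} (with $\xi$ bounded or large, as in the two cases there), and the cutting Lemma~\ref{Section 2:lem1} to remove the factor $|Du|^{p(x)}+a|Du|^{q(x)}$. Boundary $C^{1,\alpha_0}$ regularity of $\mathfrak h$ then gives the first affine approximation. Since $\beta'\le\beta_g$, the boundary data flatten at the right rate.

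The induction step uses the rescaling $u_k(y)=(u(\rho_0^ky)-\ell_k(\rho_0^ky))/\rho_0^{k(1+\beta')}$. This function solves an equation of the same form, with shift $\xi_k=b_k\rho_0^{-k\beta'}$ and degeneracy factor $\rho_0^{k\beta'p(\cdot)}|Du_k+\xi_k|^{p}+\dots$. After dividing by $\rho_0^{k\beta' p_{\max}}$, the new right-hand side is bounded by $\rho_0^{k(1-\beta'(1+p_{\max}))}\mathcal K_2\rho_0^{k\theta}$. This uses the decay $|f|\le\mathcal K_2|x|^\theta$ from (A5a), and it stays small exactly when $\beta'\le(1+\theta)/(1+p_{\max})$. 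The Hamiltonian coefficient becomes $\rho_0^{k(1-\beta'(1+p_{\max}-m))}\|h\|$, which stays small exactly when $\beta'\le 1/(1+p_{\max}-m)$. These are the three constraints defining $\beta'$ in \eqref{Section1:eq6}.

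The main obstacle is keeping the shift $\xi_k$ under control, since the Hamiltonian smallness hypothesis of Lemma~\ref{Section4:lem1} involves $\mathcal M_2|\xi_k|^{(m-p)_+}$. This is where hypothesis \eqref{Section5:eq1} enters. Since $b_k\to Du(0)$ and $|b_k - Du(0)|\lesssim\rho_0^{k\beta'}$, the condition $|Du(0)|\le\delta_0\rho^{\beta'}$ gives $|b_k|\lesssim\rho_0^{k\beta'}$ for all $k$ with $\rho_0^k\ge\rho$. Hence $|\xi_k|$ stays bounded uniformly, the approximation lemma applies at every scale, and I may even take $\ell_k$ constant up to a controlled error.

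To conclude, for $\rho_0^{k+1}<\rho\le\rho_0^k$ I would combine the decay with $|Du(0)|\rho\le\delta_0\rho^{1+\beta'}$. This gives $\sup_{\Omega^+_\rho}|u|\le \mathrm{C}\rho^{1+\beta'}$, and undoing the normalization yields \eqref{Section5:eq2}.
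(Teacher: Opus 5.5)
There is a genuine gap, and it sits exactly where hypothesis \eqref{Section5:eq1} should be doing its work.

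In both your first step and your induction step, the affine approximation comes from boundary $C^{1,\alpha_0}$ regularity of the limiting profile $\mathfrak h$. That gives $\sup_{\Omega^+_{\rho_0}}|\mathfrak h-\ell|\le \mathrm{C}\rho_0^{1+\alpha_0}$. Absorbing this into $\frac12\rho_0^{1+\beta'}$ forces $\mathrm{C}\rho_0^{\alpha_0-\beta'}\le\frac12$, i.e.\ $\beta'<\alpha_0$; this is exactly why Lemma~\ref{Section4:lem2} needs $\alpha<\alpha_0$. The exponent $\beta'$ in \eqref{Section1:eq6} contains no $\alpha_0$. It may be close to $1$ while $\alpha_0(n,\lambda,\Lambda)$ is small. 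So your flatness-improvement scheme only yields $\min\{\beta',\alpha_0^-\}$, a rate capped by $\alpha_0$ as in Theorem~\ref{Thm1}, not the one claimed.

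Your control of $\xi_k$ is also circular. You bound $|b_k|$ via $|b_k-Du(0)|\lesssim\rho_0^{k\beta'}$, which presupposes that the iteration runs for every $k$ and converges to $Du(0)$ at that rate. But you only justify the steps with $\rho_0^k\ge\rho$.

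The missing idea is to use \eqref{Section5:eq1} to kill the gradient of the blow-up limit, rather than to tame a shift. In Lemma~\ref{Section5:Lem1} the paper argues by contradiction with $|Du_j(0)|$, $\|f_j\|_\infty$, $\|h_j\|_\infty$, $\mathrm{osc}_{F_j}$ and $\|g_j\|_{C^{1,\beta_g}}$ all at most $1/j$. The $C^1$ compactness from Theorem~\ref{Thm1}, together with the cutting lemma, gives a limit with $F_\infty(D^2u_\infty)=0$, $u_\infty=0$ on the boundary, and $Du_\infty(0)=0$. For such a function the boundary estimate of \cite[Theorem 2.1]{AS23} gives $\sup_{\Omega^+_\rho}|u_\infty|\le \mathrm{C}_0\rho^2\le\frac12\rho^{1+\beta'}$, a rate independent of $\alpha_0$.

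Proposition~\ref{Section5:prop1} then iterates with the pure rescaling $u_k(x)=u(r_kx)/r_k^{1+\beta'}$, with no affine correction at all. Your remark that $\ell_k$ can be taken constant is the right instinct. The hypothesis $|Du(0)|\le\delta_0\rho_0^{(k+1)\beta'}$ gives $|Du_k(0)|\le\delta_0$, so the lemma applies again at each scale. The bounds on $f_k$, $h_k$, $g_k$ produce exactly the three constraints defining $\beta'$. No shift $\xi_k$ ever appears, and the final dyadic interpolation is the one you describe.
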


To prove this theorem, we first need the following approximation lemma involving boundary case,

\begin{lemma}\label{Section5:Lem1}
Under the hypotheses of Theorem \ref{Section5:Thm1}, let $ u  $ be a viscosity solution to \eqref{Section1:eq5}, $ u(0) = 0 $, and $ ||u||_{L^{\infty}(\Omega)} \leq 1 $, there exists a small constant $ \rho_{0} > 0  $ such that for every $ 0 < \rho < \rho_{0} $ we can find $ \delta > 0 $ for which
\begin{equation*}
  \max \bigg\{ |Du(0)|, ||\mathrm{osc}_{F}||_{L^{\infty}(\Omega_{1}^{+})}, ||f||_{L^{\infty}(\Omega_{1}^{+})}, ||h||_{L^{\infty}(\Omega_{1}^{+})},    ||g||_{C^{1,\beta_{g}}(\Omega_{1}^{'})}      \bigg\}  \leq \delta,
\end{equation*}
then there holds
\begin{equation*}
  \sup_{\Omega_{\rho}^{+}}|u(x)| \leq  \rho^{1+\beta'}.
\end{equation*}
\end{lemma}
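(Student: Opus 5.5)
We argue by contradiction combined with compactness, in the spirit of Lemma~\ref{Section4:lem1}. First fix $\rho_{0}$. Let $w$ solve $F_\infty(D^{2}w)=0$ in $\Omega^{+}_{1}$ with $w=0$ on $\Omega'_{1}$, $w(0)=0$, $Dw(0)=0$ and $\|w\|_{\infty}\le 1$, where $F_\infty$ is any $(\lambda,\Lambda)$-elliptic operator with $F_\infty(0)=0$. Boundary $C^{1,\alpha_{0}}$ regularity for uniformly elliptic equations on $C^{2}$ domains \cite{CC95} gives
\[
\sup_{\Omega_{r}^{+}}|w|\le \mathrm{C}_{*}r^{1+\alpha_{0}},
\]
with $\mathrm{C}_{*}$ universal. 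Since $\beta'\le\beta_{g}<1$, I expect to need $\beta'<\alpha_{0}$; this will be handled by replacing $\alpha_{0}$ by a larger boundary exponent if necessary, or by the restriction $\beta'<\alpha_0^-$ in the global remark. Choose $\rho_{0}$ so small that $\mathrm{C}_{*}\rho^{\alpha_{0}-\beta'}\le\tfrac12$ for all $\rho\le\rho_{0}$.

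Now fix $\rho<\rho_{0}$ and suppose no $\delta$ works. Then there are sequences $u_{j},F_{j},f_{j},h_{j},g_{j},a_{j},p_{j},q_{j}$ satisfying \eqref{Section1:eq5}, with $u_{j}(0)=0$, $\|u_{j}\|_{\infty}\le1$, and
\[
\max\{|Du_{j}(0)|,\|\mathrm{osc}_{F_{j}}\|,\|f_{j}\|,\|h_{j}\|,\|g_{j}\|_{C^{1,\beta_{g}}}\}\le 1/j,
\]
but $\sup_{\Omega^{+}_{\rho}}|u_{j}|>\rho^{1+\beta'}$. The next step is equicontinuity up to the boundary. For this I will invoke Theorem~\ref{Thm1} with $\Phi_{j}(t,x)=t^{p_{j}(x)}+a_{j}(x)t^{q_{j}(x)}$, which satisfies \hyperref[A3]{\bf (A3)} with $i(\Phi)=p_{\min}$ and $s(\Phi)=q_{\max}$, and $H_{j}=h_{j}|t|^{m}$. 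I will also use Remark~\ref{Section3:rmk3}, Proposition~\ref{Section3:Prop3} and Lemma~\ref{Section3:lemma2}, with $\xi=0$. These give uniform $C^{1,\alpha}(\overline{\Omega^{+}_{3/4}})$ bounds; in particular $Du_{j}$ is equicontinuous. By Arzel\`a--Ascoli, a subsequence satisfies $u_{j}\to u_{\infty}$ in $C^{1}(\overline{\Omega^{+}_{1/2}})$, so $u_{\infty}(0)=0$ and $Du_{\infty}(0)=\lim Du_{j}(0)=0$. Taking $F_{j}\to F_{\infty}$, the data $f_{j},h_{j}\to0$ and $g_{j}\to0$, the Stability Lemma~\ref{Section2:lem2} followed by the Cutting Lemma~\ref{Section 2:lem1} (which removes the degenerate factor $\Phi_\infty$) shows that $u_{\infty}$ solves $F_{\infty}(D^{2}u_{\infty})=0$ in $\Omega^{+}_{1/2}$ with $u_{\infty}=0$ on $\Omega'_{1/2}$.

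Applying the first paragraph to $u_\infty$, rescaled to $\Omega^{+}_{1/2}$ (this only changes the constant), gives $\sup_{\Omega^{+}_{\rho}}|u_{\infty}|\le\tfrac12\rho^{1+\beta'}$. Uniform convergence then yields $\sup_{\Omega^{+}_{\rho}}|u_{j}|\le\tfrac34\rho^{1+\beta'}$ for large $j$, a contradiction.

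The main obstacle is the compactness step. The exponents $p_{j},q_{j}$ are merely continuous, so Lemma~\ref{Section2:lem2} has to be applied with $\Phi_{j}$ varying along the sequence. The Cutting Lemma, which is stated for $x$-independent $\Phi$, must also be justified when $\Phi_\infty$ depends on $x$. I would try the Imbert--Silvestre argument first: test functions with nonzero gradient give $F_\infty\ge0$ directly, since $\Phi_\infty>0$ there, and the zero-gradient case is handled by tilting the test function.
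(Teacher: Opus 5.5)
Your contradiction--compactness skeleton matches the paper's: uniform $C^{1,\alpha}$ bounds from Theorem~\ref{Thm1}, Arzel\`a--Ascoli in the $C^{1}$ topology so that $Du_{\infty}(0)=0$, then stability and the cutting lemma to reach $F_{\infty}(D^{2}u_{\infty})=0$ with zero boundary data. However, the estimate you use in the final step is too weak, and this is a genuine gap.

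Boundary $C^{1,\alpha_{0}}$ regularity only gives $\sup_{\Omega^{+}_{\rho}}|w|\le \mathrm{C}_{*}\rho^{1+\alpha_{0}}$. This beats $\frac12\rho^{1+\beta'}$ for small $\rho$ only when $\beta'<\alpha_{0}$, and nothing in the hypotheses guarantees that. The exponent in \eqref{Section1:eq6} is $\beta'=\min\{\beta_{g},\frac{1+\theta}{1+p_{\mathrm{max}}},\frac{1}{1+p_{\mathrm{max}}-m}\}$, with no $\alpha_{0}$ in it. That absence is the point of Theorem~\ref{Thm2}; the $\alpha_{0}^{-}$ cap in the remark after it concerns only the global estimate, where interior points enter. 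So your fallback $\beta'<\alpha_{0}^{-}$ proves a strictly weaker lemma. Your other option, ``replacing $\alpha_{0}$ by a larger boundary exponent,'' is exactly the missing ingredient, but you leave it unidentified.

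The fact you need is the improved pointwise regularity of uniformly elliptic equations at boundary points with smooth data (Silvestre--Sirakov), which the paper uses through \cite[Theorem 2.1]{AS23}. It says the following: a solution of $F_{\infty}(D^{2}w)=0$ in $\Omega^{+}_{2/3}$ with $w=0$ on $\Omega'_{2/3}$, $\|w\|_{\infty}\le 1$, $w(0)=0$ and $Dw(0)=0$ satisfies the quadratic bound $\sup_{\Omega^{+}_{\rho}}|w|\le \mathrm{C}_{0}\rho^{2}$ with $\mathrm{C}_{0}$ universal. This holds even though interior regularity of such $w$ is in general limited to $C^{1,\alpha_{0}}$.

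Choosing $\rho_{0}\le(2\mathrm{C}_{0})^{-1/(1-\beta')}$ then gives $\mathrm{C}_{0}\rho^{2}\le\frac12\rho^{1+\beta'}$ for every $\beta'<1$, and here $\beta'\le\beta_{g}<1$. The rest of your argument then goes through unchanged. Any boundary rate $\rho^{1+\gamma}$ with $\gamma>\beta'$ would do, but it must be a rate not capped by the Krylov--Safonov exponent.

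Your concerns about the varying exponents $p_{j},q_{j}$ and the $x$-dependent cutting lemma are legitimate details, which the paper also passes over. They are not where the difficulty lies.
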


\begin{proof}
We argue by contradiction, suppose that for some $ \rho > 0 $, there exist sequences $\{u_{j}\}_{j \in \mathbb{N}} $, $ \{F_{j}\}_{j \in \mathbb{N}}$, $\{f_{j}\}_{j \in \mathbb{N}}$, $\{g_{j}\}_{j \in \mathbb{N}} $, $ \{h_{j}\}_{j \in \mathbb{N}}$, $ \{p_{j}\}_{j \in \mathbb{N}}$, $ \{q_{j}\}_{j \in \mathbb{N}}$ and $ \{a_{j}\}_{j \in \mathbb{N}}$ such that $ u_{j} $ solves
\begin{equation}\label{Section5:eq3}
  \big[|Du_{j}|^{p_{j}(x)}+ a_{j}(x)|Du_{j}|^{q_{j}(x)}\big]F_{j}(D^{2}u_{j},x)+ h_{j}(x)|Du_{j}|^{m} = f_{j}(x)         \quad      \text{in} \ \ \Omega_{1}^{+},
\end{equation}
with $ u_{j}(0) = 0 $, and for each $ j \in \mathbb{N} $, it holds
\begin{equation}\label{Section5:eq4}
  \max \bigg\{ |Du_{j}(0)|, ||\mathrm{osc}_{F_{j}}||_{L^{\infty}(\Omega_{1}^{+})}, ||f_{j}||_{L^{\infty}(\Omega_{1}^{+})}, ||h_{j}||_{L^{\infty}(\Omega_{1}^{+})},    ||g_{j}||_{C^{1,\beta_{g}}(\Omega_{1}^{'})}      \bigg\}  \leq \frac{1}{j},
\end{equation}
but
\begin{equation}\label{Section5:eq5}
  \sup_{\Omega_{\rho}^{+}}|u_{j}(x)| >  \rho^{1+\beta'}.
\end{equation}
With the aid of Theorem \ref{Thm1} and Arzel$ \grave{a}$-Ascoli theorem, we know that $ u_{j} $ converges (up to a subsequence) locally uniformly in $ B_{1} $ to a continuous function $ u_{\infty} $ in $ C^{1} $-topology. From \hyperref[A1]{\bf (A1)} and \eqref{Section5:eq4}, we see $ F_{j}(\mathrm{X}, x) \rightarrow F_{\infty}(\mathrm{X})$ locally uniformly in $ B_{1} \times \mathrm{Sym}(n) $ for some uniformly elliptic operator $  F_{\infty} $. Using the stability of viscosity solution and cutting lemma (Lemma \ref{Section 2:lem1}), we get $ u_{\infty} $ solves
\begin{equation*}
\left\{
     \begin{alignedat}{2}
        F_{\infty}(D^{2}u_{\infty}) & = 0        \quad   &&   \text{in} \quad  \Omega_{2/3}^{+}    ,    \\
          u_{\infty}   &  = 0    \quad  &&   \text{on} \quad  \Omega_{2/3}^{'},        \\
          |Du_{\infty}(0)| &  = 0.    \\
     \end{alignedat}
     \right.
\end{equation*}
Hence, \cite[Theorem 2.1]{AS23} gives that for some $ \mathrm{C}_{0} >0 $, depending only on $ n, \lambda, \Lambda $, such that for any $ 0 < \rho \leq \rho_{0} $,
\begin{equation*}\label{}
  \sup_{\Omega_{\rho}^{+}}|u_{\infty}(x)| \leq \mathrm{C}_{0} \rho^{2}  \leq \frac{1}{2}\rho^{1+\beta'},
\end{equation*}
where we choose a suitable $ \rho $ satisfying $ 0 < \rho \leq (2C_{0})^{-\frac{1}{1-\beta'}} $. Consequently, for $ j$ large enough, we have
\begin{equation*}
  \sup_{\Omega_{\rho}^{+}}|u_{j}(x)| \leq \sup_{\Omega_{\rho}^{+}}|u_{j}(x)-u_{\infty}(x)|  + \sup_{\Omega_{\rho}^{+}}|u_{\infty}(x)| \leq  \frac{1}{2}\rho^{1+\beta'} + \frac{1}{2}\rho^{1+\beta'} = \rho^{1+\beta'},
\end{equation*}
which contradicts with \eqref{Section5:eq5}.
\end{proof}

We then iterate the preceding estimate so as to control the oscillation of the solutions on $ \rho_{0}$-adic balls.

\begin{proposition}\label{Section5:prop1}
Under the hypotheses of Theorem \ref{Section5:Thm1}, let $ u  $ be a viscosity solution to \eqref{Section1:eq5}, $ u(0) = 0 $, $ ||u||_{L^{\infty}(\Omega)} \leq 1 $ and $ \beta'$ be as in \eqref{Section1:eq6}. there exists a small constant $ \rho_{0} > 0  $ and $ \delta_{0} > 0 $ for which
\begin{equation}\label{Section5:eq6}
  \max \bigg\{||\mathrm{osc}_{F}||_{L^{\infty}(\Omega_{1}^{+})}, ||f||_{L^{\infty}(\Omega_{1}^{+})}, ||h||_{L^{\infty}(\Omega_{1}^{+})},    ||g||_{C^{1,\beta_{g}}(\Omega_{1}^{'})}      \bigg\}  \leq \delta_{0},
\end{equation}
and for some $ j \in \mathbb{N} $,
\begin{equation}\label{Section5:eq7}
|Du(0)| \leq  \delta_{0} \rho_{0}^{j \beta'},
\end{equation}
then
\begin{equation}\label{Section5:eq8}
   \sup_{\Omega_{\rho_{0}^{j}}^{+}}|u(x)| \leq  \rho_{0}^{j(1+\beta')}.
\end{equation}
\end{proposition}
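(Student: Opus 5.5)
We argue by induction on $j$, with Lemma~\ref{Section5:Lem1} as the base case. For $j=1$, \eqref{Section5:eq7} gives $|Du(0)|\le \delta_0\rho_0^{\beta'}\le\delta_0$. Together with \eqref{Section5:eq6}, all hypotheses of Lemma~\ref{Section5:Lem1} hold with $\delta=\delta_0$, once we fix $\rho=\rho_0$ there and take $\delta_0$ no larger than the $\delta$ that lemma produces. Hence $\sup_{\Omega^+_{\rho_0}}|u|\le\rho_0^{1+\beta'}$.

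For the inductive step, assume \eqref{Section5:eq8} holds at level $k<j$ and that \eqref{Section5:eq7} holds at level $j$. Since $\rho_0^{j\beta'}\le\rho_0^{k\beta'}$, the gradient hypothesis also holds at every earlier level, so the induction is consistent. We rescale by
\[
u_k(x):=\frac{u(\rho_0^k x)}{\rho_0^{k(1+\beta')}},\qquad x\in \Omega_k:=\rho_0^{-k}\Omega .
\]
Then $\|u_k\|_{L^\infty(\Omega_{k,1}^+)}\le 1$ by the inductive hypothesis, $u_k(0)=0$, and $|Du_k(0)|=\rho_0^{-k\beta'}|Du(0)|\le \delta_0\rho_0^{(j-k)\beta'}$. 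The boundary of $\Omega_k$ is flatter, since $\|D^2\varphi_k\|\le\|D^2\varphi\|$. A direct computation shows that $u_k$ solves an equation of the form \eqref{Section1:eq5} with data
\begin{align*}
F_k(X,x)&=\rho_0^{k(1-\beta')}F(\rho_0^{k(\beta'-1)}X,\rho_0^kx),\qquad a_k(x)=a(\rho_0^kx)\rho_0^{k\beta'(q-p)(\rho_0^k x)},\\
h_k(x)&=h(\rho_0^kx)\,\rho_0^{k[1-\beta'+\beta'(m-p(\rho_0^kx))]},\qquad f_k(x)=f(\rho_0^kx)\,\rho_0^{k[1-\beta'-\beta' p(\rho_0^kx)]}.
\end{align*}
Here the left-hand side has been divided by $\rho_0^{k\beta' p(\rho_0^k x)}$, so the rescaled problem keeps the same structure with exponents $p_k(x)=p(\rho_0^kx)$ and $q_k(x)=q(\rho_0^kx)$. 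The boundary datum becomes $g_k=g(\rho_0^k\cdot)/\rho_0^{k(1+\beta')}$.

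The main step is to verify the smallness \eqref{Section5:eq6} for these rescaled data, and this is where the three constraints defining $\beta'$ in \eqref{Section1:eq6} enter.
\begin{itemize}
\item \emph{Source term.} By \hyperref[A5a]{\bf (A5a)}, $|f(\rho_0^kx)|\le\mathcal K_2\rho_0^{k\theta}$. This gives the exponent $k[1+\theta-\beta'(1+p)]\ge0$, using $\beta'\le\frac{1+\theta}{1+p_{\max}}$.
\item \emph{Hamiltonian term.} Its exponent is $1-\beta'(1+p-m)\ge 0$, using $\beta'\le\frac1{1+p_{\max}-m}$.
\item \emph{Operator.} The oscillation of $F_k$ is $\mathrm{osc}_F(\rho_0^k\cdot)$, which is only smaller.
\item \emph{Boundary datum.} We need $\beta'\le\beta_g$. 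Since $g$ need not vanish to second order at $0$, we first subtract its tangential linear part, in the same way as in Lemma~\ref{Section4:lem3}. Alternatively, we use that $u(0)=0$ and $|Du(0)|$ small force $g(0)$ and $Dg(0)$ to be small, so that $\|g_k\|_{C^{1,\beta_g}}\le \rho_0^{k(\beta_g-\beta')}\|g\|$ up to lower-order terms. This is the delicate point.
\end{itemize}
Since $\rho_0<1$ and all exponents are nonnegative, each rescaled quantity is bounded by $\delta_0$. The coefficient $a_k$ stays bounded because $q\ge p$.

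Finally, we apply Lemma~\ref{Section5:Lem1} to $u_k$ with $\rho=\rho_0$. This gives $\sup_{\Omega^+_{k,\rho_0}}|u_k|\le\rho_0^{1+\beta'}$. Scaling back yields $\sup_{\Omega^+_{\rho_0^{k+1}}}|u|\le\rho_0^{(k+1)(1+\beta')}$, which closes the induction up to $k+1=j$.

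The main obstacle I expect is that Lemma~\ref{Section5:Lem1} must be applied uniformly over the class of rescaled equations. Its compactness argument has to tolerate variable exponents $p_k,q_k$ that vary with $k$ but stay in $[p_{\min},q_{\max}]$, flattening boundaries $\varphi_k$, and the coefficients $a_k$. The uniform $C^{1,\alpha}$ bound of Theorem~\ref{Thm1} that Lemma~\ref{Section5:Lem1} relies on applies to this whole class, since \hyperref[A3]{\bf (A3)} holds with uniform constants, $i(\Phi)=p_{\min}$ and $s(\Phi)=q_{\max}$. Therefore $\delta_0$ can be chosen independently of $k$.
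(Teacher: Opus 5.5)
Your induction is the paper's proof: Lemma~\ref{Section5:Lem1} as the base case, the rescaling $u_k(x)=u(\rho_0^k x)/\rho_0^{k(1+\beta')}$, smallness of the rescaled data from the three constraints defining $\beta'$, and Lemma~\ref{Section5:Lem1} again. Your remark that \eqref{Section5:eq7} at level $j$ gives it at every earlier level is cleaner than the paper's bookkeeping.

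The step that does not follow from the stated hypotheses is the source term, and the paper's item (iii) has the same problem. You get $\|f_k\|_{L^\infty}\le \mathcal{K}_2\,\rho_0^{k[1+\theta-\beta'(1+p_{\mathrm{max}})]}\le\mathcal{K}_2$. But \eqref{Section5:eq6} only gives $\|f\|_{L^\infty(\Omega_1^+)}\le\delta_0$, and this does not control $\mathcal{K}_2$. For example, take $p\equiv p_{\mathrm{max}}$, $\beta'=\frac{1+\theta}{1+p_{\mathrm{max}}}$ and $f=\min\{\delta_0,\mathcal{K}_2|x|^\theta\}$. Then $f_k=\min\{\delta_0\rho_0^{-k\theta},\mathcal{K}_2|x|^\theta\}$, which is of size $\mathcal{K}_2$ for large $k$. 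When the exponent vanishes, shrinking $\rho_0$ (the paper's ``for small $\rho_0$'') buys nothing. When the exponent is positive, you still cannot force $\mathcal{K}_2\rho_0^{\varepsilon}\le\delta_0$. The reason is that $\delta_0$ comes out of the contradiction argument of Lemma~\ref{Section5:Lem1} only after $\rho_0$ is fixed, with no rate. The repair is to put $\mathcal{K}_2\le\delta_0$ into the smallness regime. The normalization $K\ge\mathcal{K}_2^{1/(1+p_{\mathrm{min}})}$ in the proof of Theorem~\ref{Section5:Thm1} supplies exactly this, and with it your bound closes.

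For the boundary datum, drop your first option. Subtracting a linear function from $u$ turns \eqref{Section1:eq5} into an equation in $|Dv+\xi|$, which is outside the class for which Lemma~\ref{Section5:Lem1} is stated. Your second option is the right one. It also repairs the paper's item (v), which rescales only the H\"older seminorm of $Dg$ and ignores the $L^\infty$ and gradient parts of $\|g_k\|_{C^{1,\beta_g}}$; a priori these carry a factor $\rho_0^{-k\beta'}$. Use $g(0)=u(0)=0$ and the fact that the tangential gradient of $g$ at $0$ is the tangential part of $Du(0)$. This gives $\|g_k\|_{C^{1,\beta_g}}\lesssim\rho_0^{-k\beta'}|Du(0)|+\rho_0^{k(\beta_g-\beta')}\|g\|_{C^{1,\beta_g}}\lesssim\delta_0$, with constants depending on $\|D^2\varphi\|_\infty$. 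So choose $\delta_0$ as a fixed fraction of the $\delta$ given by Lemma~\ref{Section5:Lem1}.
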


\begin{proof}
Under assumption \eqref{Section5:eq6}, we use induction to prove that for any positive integer $ j $, validity of estimate \eqref{Section5:eq7} yields estimate \eqref{Section5:eq8}. The case $ j=1 $ follows directly from Lemma \ref{Section5:Lem1}. Suppose that \eqref{Section5:eq8} is true for $ j = k $, now we assert the same is true for $ j= k+1 $. In effect, we denote $ r_{k}:= \rho_{0}^{k} $ and an auxiliary function
\begin{equation*}
  u_{k}(x):= \frac{u(r_{k}x)}{r_{k}^{1+\beta'}}  \qquad   \text{in}  \qquad    \Omega_{1,k}^{+}:= B_{1} \cap \{ y \in \mathbb{R}^{n}: y_{n} > \varphi_{k}(y') \},
\end{equation*}
then a straightforward computation shows that $u_{k}$ solves
\begin{equation}
\label{Section5:eq9}
\left\{
     \begin{alignedat}{2}
           [|Du_{k}|^{p_{k}(y)}+a_{k}(y)|Du_{k}|^{q_{k}(y)}] F_{k}(D^{2}u_{k}, y) + h_{k}(y)|Du_{k}|^{m} & = f_{k}         \quad   &&   \text{in} \quad  \Omega_{1,k}^{+}     ,    \\
          u_{k}   &  = g_{k}    \quad  &&   \text{on} \quad \Omega_{1,k}^{'} ,         \\
     \end{alignedat}
     \right.
\end{equation}
where
\begin{equation*}
\left\{
     \begin{aligned}
& F_{k}(\mathrm{X}, y):= r_{k}^{1-\beta'} F(r_{k}^{\beta'-1}\mathrm{X}, r_{k}y);          \\
& h_{k}(y):=r_{k}^{1+\beta'[m-p_{k}(y)-1]}h(r_{k}y) ;    \\
& f_{k}(y):= r_{k}^{1-\beta'(1+p_{k}(y))} f(r_{k}y)     ,   \quad  g_{k}(y):= \frac{u(r_{k}y)}{r_{k}^{1+\beta'}};      \\
&  p_{k}(y):= p(r_{k}y),   \quad   q_{k}(y):= q(r_{k}y);   \\
&   \varphi_{k}(y'):= \frac{\varphi(r_{k}y')}{r_{k}}.
\end{aligned}
     \right.
\end{equation*}

Also, we observe that
\begin{enumerate}[label=(\roman*)]

\item $ F_{k} $ satisfies \hyperref[A1]{\bf (A1)} with the same constants, and
\begin{equation*}
  ||\mathrm{osc}_{F_{k}}||_{L^{\infty}(B_{1}\cap \{y_{n}>\varphi_{k}(y')\})}   \leq   ||\mathrm{osc}_{F}||_{L^{\infty}(B_{1}\cap \{y_{n}>\varphi(y')\})} \leq \delta_{0};
\end{equation*}

\item $ |Du_{k}(0)| = r_{k}^{-\beta'} |Du(0)| = \rho_{0}^{-k\beta'} |Du(0)|    \overset{\eqref{Section5:eq7}}{\leq}  \delta_{0};       $

\item $ ||f_{k}||_{L^{\infty}(\Omega_{1,k}^{+})} \leq  r_{k}^{1-\beta'(1+p_{k}(y))+ \theta} \mathcal{K}_{2}     \leq  \delta_{0}  $, for small $ \rho_{0} $, where we have used the fact:
    \begin{equation*}
     0<  \beta' \leq \frac{1+\theta}{1+p_{\text{max}}};
    \end{equation*}

\item Since $ \beta' \leq   \frac{1}{1+p_{\mathrm{max}}-m} $, we have
    $$ ||h_{k}||_{L^{\infty}(\Omega_{1,k}^{+})} \leq r_{k}^{1+\beta'[m-p_{k}(y)-1]} ||h||_{L^{\infty}(\Omega_{1}^{+})}  \leq  ||h||_{L^{\infty}(\Omega_{1}^{+})}  \leq \delta_{0};  $$

\item  A straightforward computation yields that for any $ y,z \in \Omega_{1,k}^{+} $,
\begin{align*}
|Dg_{k}(y) - Dg_{k}(z)| & = \rho_{0}^{-j\alpha} |Dg(\rho_{0}^{j}y) - Dg(\rho_{0}^{j}z)|  \leq \rho_{0}^{j(\beta_{g}-\alpha)} ||g||_{C^{1,\beta_{g}}(B_{1}\cap \{y_{n}>\varphi(y')\})} |y-z|^{\beta_{g}}       \\
&  \leq ||g||_{C^{1,\beta_{g}}(B_{1}\cap \{y_{n}=\varphi(y')\})} |y-z|^{\beta_{g}}  \quad  \text{for} \  \rho_{0}  \ \text{small enough},
\end{align*}
then we have that
\begin{equation*}
  ||g_{j}||_{C^{1,\beta_{g}}(B_{1}\cap \{y_{n}>\varphi_{j}(y')\})} \leq ||g||_{C^{1,\beta_{g}}(B_{1}\cap \{y_{n}=\varphi(y')\})} \leq \delta_{0}.
\end{equation*}
\end{enumerate}
Moreover, since \eqref{Section5:eq7} also holds for $ j =k +1 $, then it yields
\begin{equation*}
  |Du_{k}(0)| \leq \delta_{0} r_{k}^{\beta'}  \leq \delta_{0}.
\end{equation*}
By taking $ \delta = \delta_{0} $, and applying Lemma \ref{Section5:Lem1} to $ u_{k} $, we obtain
\begin{equation*}
   \sup_{\Omega_{\rho_{0}^{k+1}}^{+}}|u(x)| \leq  \rho_{0}^{(k+1)(1+\beta')},
\end{equation*}
which implies that \eqref{Section5:eq8} holds for $ j =k+1 $. Hence we complete the proof of Proposition \ref{Section5:prop1}.
\end{proof}

With these preparations, we are ready to present the proof of Theorem \ref{Section5:Thm1}.

\begin{proof}[{Proof of Theorem \ref{Section5:Thm1}}]
Let $ \rho_{0} $ and $ \delta_{0} $ be as in Proposition \ref{Section5:prop1}. Assume that $ ||u||_{L^{\infty}(\Omega)} \leq 1 $ and the smallness conditions \eqref{Section5:eq6} hold. For given $ 0 < \rho \leq  \rho_{0} $, then there exists $ k \in \mathbb{N} $ such that $ \rho_{0}^{k+1} < \rho \leq \rho_{0}^{k} $. Hence it is easy to see that \eqref{Section5:eq1} implies $ |Du(0)| \leq  \delta_{0} \rho_{0}^{k \beta'} $. By applying Proposition \ref{Section5:prop1}, we have
 \begin{equation*}
   \sup_{\Omega_{\rho}^{+}}|u(x)| \leq \rho_{0}^{k(1+\beta')} = \rho_{0}^{-(1+\alpha)} \rho_{0}^{(k+1)(1+\beta')} \leq \mathrm{C} \rho^{1+\beta'}.
 \end{equation*}
which implies that Theorem \ref{Section5:prop1} is true in this particular scenario.

For the general case, for $ \delta_{0} $ as in \eqref{Section5:eq6}, we define
\begin{equation*}
  \widetilde{u}(x):= \frac{u(rx)}{K}
\end{equation*}
for a positive constant $ K \geq 1 \geq r $ to be determined later. Direct calculations yield that $ \widetilde{u} $ satisfies, in the viscosity sense, the following problem:
\begin{equation*}
\left\{
     \begin{alignedat}{2}
          \big[|D\widetilde{u}|^{\widetilde{p}(x)}  + \widetilde{a}(x)|D\widetilde{u}|^{\widetilde{q}(x)}       \big] \widetilde{F}(D^{2}\widetilde{u},x)+ \widetilde{h}(x)|D\widetilde{u}|^{m}   & = \widetilde{f}(x)            \quad   &&   \text{in} \quad   \Omega_{1,r}^{+}    ,    \\
             \widetilde{u} &  = \widetilde{g}     \quad  &&   \text{on} \quad  \Omega_{1,r}^{'}  ,        \\
     \end{alignedat}
     \right.
\end{equation*}
where
 \begin{equation*}
\left\{
     \begin{aligned}
& \widetilde{F}(\mathrm{X}, x):= \frac{r^{2}}{K} F\left(\frac{K}{r^{2}}\mathrm{X}, rx\right);          \\
& \widetilde{h}(x):=\frac{K^{m-1-\widetilde{p}(x)}}{r^{m-2-\widetilde{p}(x)}}       h(rx) ;    \\
& \widetilde{f}(x):= \frac{r^{2+\widetilde{p}(x)}}{K^{1+\widetilde{p}(x)}} f(rx)     ,   \quad  \widetilde{g}(x):= \frac{u(rx)}{K};      \\
&  \widetilde{p}(x):= p(rx),   \quad   \widetilde{q}(x):= q(rx);  \\
&  \Omega_{1,r}^{+}:= B_{1} \cap \{ x \in \mathbb{R}^{n}: x_{n} \geq r^{-1} \varphi(r x') \};  \\
&   \Omega_{1,r}^{'}:=  B_{1} \cap \{ x \in \mathbb{R}^{n}: x_{n} =  r^{-1} \varphi(r x') \}.
\end{aligned}
     \right.
\end{equation*}
Arguing as in Remark \ref{Section3:rmk1}, it follows that
 \begin{equation*}
  \max \bigg\{||\mathrm{osc}_{\widetilde{F}}||_{L^{\infty}(\Omega_{1}^{+})}, ||\widetilde{f}||_{L^{\infty}(\Omega_{1,r}^{+})}, ||\widetilde{h}||_{L^{\infty}(\Omega_{1,r}^{+})},    ||\widetilde{g}||_{C^{1,\beta_{g}}(\Omega_{1,r}^{'})}      \bigg\}  \leq \delta_{0},
\end{equation*}
provided we choose the constants $ K $ and $ r $ as below:
\[
K :=
\begin{cases}
1 + \|u\|_{L^\infty(\Omega)} +  ||h||_{L^\infty(\Omega)}^{\frac{1}{1+p_{\mathrm{min}}-m}}        +  \mathcal{K}_{2}^{\frac{1}{1+p_{\mathrm{min}}}} +   ||g||_{C^{1,\beta_{g}}(\partial \Omega)}  & \text{for } \ m < 1 + p_{\mathrm{min}}, \\[1em]
1 + \|u\|_{L^\infty(\Omega)} + ||g||_{C^{1,\beta_{g}}(\partial \Omega)} & \text{for } \ m = 1 + p_{\mathrm{min}},
\end{cases}
\]
and
\[
r :=
\begin{cases}
\min\left\{1,  \left(\frac{\delta_{0}}{C_F}\right)^{\frac{1}{\theta}}, \delta_{0}^{\frac{1}{2+p_{\mathrm{min}}}}, \delta_{0}^{\frac{1}{2+p_{\mathrm{min}}-m}}\right\} & \text{for } \ m < 1 + p_{\mathrm{min}}, \\[1.5em]
\min\left\{1, \left(\frac{\delta_{0}}{C_F}\right)^{\frac{1}{\theta}}, \left(\frac{\delta_{0}}{||h||_{L^{\infty}(\Omega)}}\right)^{\frac{1}{2+p_{\mathrm{min}}}}, \frac{\delta_{0}}{\mathcal{K}_{2}}\right\} & \text{for } \ m = 1 + p_{\mathrm{min}}.
\end{cases}
\]

Thus we finish the proof of Theorem~\ref{Section5:Thm1}.
\end{proof}

In the end, we give a complete proof of Theorem~\ref{Thm2}.

\begin{proof}[{Proof of Theorem~\ref{Thm2}}]
By scaling argument as in Remark \ref{Section3:rmk1}, we may assume that $ u(0) = 0 $, $ ||u||_{L^{\infty}(B_{1})} \leq 1 $ and $ \max \big\{||\mathrm{osc}_{F}||_{L^{\infty}(\Omega_{1}^{+})}, ||f||_{L^{\infty}(\Omega_{1}^{+})}, ||h||_{L^{\infty}(\Omega_{1}^{+})},    ||g||_{C^{1,\beta_{g}}(\Omega_{1}^{'})}      \big\}  \leq \delta_{0} $. Let $ \delta_{0}, \rho_{0} > 0 $ be the numbers granted by Theorem~\ref{Section5:Thm1}. We distinguish two cases:

{\boxed{\text{Case 1}:}} $ |Du(0)| \leq \delta_{0} \rho^{\beta'} $. We define
\begin{equation}\label{Section5:eq3}
  \iota:= \bigg( \frac{|Du(0)|}{\delta_{0}}   \bigg)^{\frac{1}{\beta'}}.
\end{equation}
If $  \iota \leq t \leq \rho \leq \rho_{0}$, it follows from \eqref{Section5:eq3} that
\begin{equation}\label{Section5:eq4}
  |Du(0)| \leq \iota^{\beta'} \delta_{0}  \leq \delta_{0} t^{\beta'}.
\end{equation}
Then by Theorem~\ref{Section5:Thm1}, one obtain
\begin{equation*}
  \sup_{\Omega_{t}^{+}} |u(x)|  \leq \mathrm{C} t^{1+\beta'},
\end{equation*}
which, together with \eqref{Section5:eq4}, yields
\begin{equation*}
   \sup_{\Omega_{t}^{+}} |u(x)-Du(0)\cdot x| \leq (\mathrm{C}+\delta_{0})t^{1+\beta'}.
\end{equation*}

On the other hand, if $ 0 < t < \iota \leq \rho_{0} $, in this case we consider a new scaling function
\begin{equation*}
  u_{\iota}(x):= \frac{u(\iota x)}{\iota^{1+\beta'}} \quad   \text{in}  \quad  \Omega_{1, \iota}^{+}:= B_{1} \cap \{ x \in \mathbb{R}^{n}: x_{n} \geq \iota^{-1} \varphi(\iota x') \}.
\end{equation*}
Now we claim
$$ \sup_{x \in B_{1}} |u_{\iota}(x)| \leq   \mathrm{C} $$
for a universal constant $ \mathrm{C} > 0 $. In fact, from \eqref{Section5:eq3}, we have $ |Du(0)| = \delta_{0} \iota^{\beta}$, then using Theorem~\ref{Section5:Thm1}, it follows
\begin{equation*}
  \sup_{x \in \Omega_{1, \iota}^{+}} |u_{\iota}(x)| = \sup_{x \in \Omega_{1, \iota}^{+}} \bigg|\frac{u(\iota x)}{\iota^{1+\beta}}\bigg| \leq \mathrm{C},
\end{equation*}
which shows the claim.

Furthermore, a routine calculation reveals that $ u_{\iota} $ satisfies, in the viscosity sense,
\begin{equation}
\label{Section5:eq120}
\left\{
     \begin{alignedat}{2}
          \big[|Du_{\iota}|^{p_{\iota}(x)}  + a_{\iota}(x)|Du_{\iota}|^{q_{\iota}(x)}       \big] F_{\iota}(D^{2}u_{\iota},x)+ h_{\iota}(x)|Du_{\iota}|^{m}   & = f_{\iota}(x)            \quad   &&   \text{in} \quad   \Omega_{1,r}^{+}    ,    \\
             u_{\iota} &  = g_{\iota}     \quad  &&   \text{on} \quad  \Omega_{1,r}^{'}  ,        \\
     \end{alignedat}
     \right.
\end{equation}
where
 \begin{equation*}
\left\{
     \begin{aligned}
& F_{\iota}(\mathrm{X}, x):= \iota^{1-\beta'} F(\iota^{\beta'-1}\mathrm{X}, \iota x);      \\
& h_{\iota}(x):= \iota^{1+\beta'[m-1-p_{\iota}(x)]} h(\iota x)     ;    \\
&  f_{\iota}(x):= \iota^{1-\beta'(1+p_{\iota}(x))} f(\iota x),  \quad    g_{\iota}(x):= \frac{u(\iota x)}{\iota^{1+\beta'}};    \\
&  p_{\iota}(x):= p(\iota x),   \quad   q_{\iota}(x):= q(\iota x).
\end{aligned}
     \right.
\end{equation*}
We apply \hyperref[A5a]{\bf (A5a)}, it readily see
\begin{equation*}
  |f_{\iota}(x)|  \leq  \mathcal{K}_{2} \iota^{1+\theta-\beta'(1+p_{\iota}(x))}|x|^{\theta}  \leq \mathcal{K}_{2},
\end{equation*}
where we have used that fact $ 1+\theta-\beta'(1+p_{\mathrm{max}}) > 0 $ and $ \iota \in (0,1) $. Moreover, $ F_{\iota} $ still fulfills the conditions \hyperref[A1]{\bf (A1)} and \hyperref[A2]{\bf (A2)}. Then by using Theorem~\ref{Section2:thm3}, we have $ u_{\iota} \in C^{1, \beta'}_{\text{loc}}$. In addition, recall $ |D u_{\iota}(0)| = \delta_{0}$, then there exists a small constant $ r $, independent of $ \iota $, it infers
\begin{equation*}
 \frac{\delta_{0}}{2} \leq  |Du_{\iota}(x)| \leq  2 \delta_{0}  \quad  \text{in}  \quad  \Omega_{r, \delta_{0}}^{+}.
\end{equation*}

We may now rewrite \eqref{Section5:eq120} as
\begin{equation*}
   F_{\iota}(D^{2}Du_{\iota}, x) = \frac{f_{\iota}(x)-h_{\iota}(x)|Du_{\iota}|^{m}   }{|Du_{\iota}|^{p_{\iota}(x)}  + a_{\iota}(x)|Du_{\iota}|^{q_{\iota}(x)}}:= \widetilde{\widetilde{f}}_{\iota}(x)  \quad  \text{in}  \quad  \Omega_{r, \delta_{0}}^{+}.
\end{equation*}
That is, $ u_{\iota} $ solves a fully nonlinear uniformly elliptic equation with a bounded source term $ \widetilde{\widetilde{f}}_{\iota}$ on the right-hand side. Consequently, using the regularity result established by \cite[Theorem 2.1]{AS23}, then $ u_{\iota} \in C^{1, \alpha_{0}^{-}}(\Omega_{r, \delta_{0}}^{+})$. Particularly,
\begin{equation*}
  \sup_{x \in \Omega_{\delta, \delta_{0}}^{+}}|u_{\iota}(x) -  D u_{\iota}(0) \cdot x| \leq \mathrm{C} \delta^{1+\beta_{g}}
\end{equation*}
for every $  0 \leq \delta \leq \frac{r}{2} $. Rescaling back leads to
\begin{equation*}
  \sup_{x \in \Omega_{t}^{+}} |u(x) - Du(0) \cdot x| \leq  \mathrm{C} t^{1+\beta'}
\end{equation*}
for every $ 0 < t \leq \frac{\iota r}{2} $. When $ \frac{\iota r}{2} < t < r $, using the result $ t = \iota $ in Case 1, we have
\begin{align*}
  \sup_{x \in \Omega_{t}^{+}} |u(x) - Du(0) \cdot x| \leq  \sup_{x \in \Omega_{\iota}^{+}} |u(x) - Du(0) \cdot x| & \leq (\mathrm{C}+\delta_{0})\iota^{1+\beta'}   \\
&  \leq (\mathrm{C}+\delta_{0})\left(\frac{2}{r}\right)^{1+\beta'} t^{1+\beta'}.
\end{align*}
This complete the proof of Case 1.

{\boxed{\text{Case 2}:}} $ |Du(0)| \geq \delta_{0} \rho^{\beta'} $. In this scenario, we consider a scaling function   
\begin{equation*}
  \widehat{u}(x):= \frac{\delta_{0} \rho^{\beta'}}{|Du(0)|} u(x),
\end{equation*}
and it is easy to see $ D \widehat{u}(0) = \delta_{0} \rho^{\beta'} $, which reduces to the Case 1.

Combining the two cases above, we complete the proof of Theorem~\ref{Thm2}.
\end{proof}

\section{Proof of Theorem \ref{Thm3}: non-degeneracy}
\label{Section 6}
In this section, we are concerned to prove Theorem~\ref{Thm3} by constructing a delicate auxiliary function and using the general comparison principle.

We begin by recalling the following comparison principle, which will play a crucial role in the proof that follows.

\begin{proposition}[{Comparison principle II}]
\label{Section8:prop1}
Suppose that the assumptions \hyperref[A1]{\bf (A1)}, \hyperref[A2]{\bf (A2)}, \hyperref[A3a]{\bf (A3a)} and \hyperref[A4a]{\bf (A4a)} hold. Let $ u $ be a lower semi-continuous bounded super-solution of
\begin{equation*}
  \big[|Du|^{p(x)}+ a(x)|Du|^{q(x)}\big]F(D^{2}u,x)+ h(x)|Du|^{m}  = f(x)    \quad   \text{in}   \quad  B_{1}
\end{equation*}
and $ v $ be is a upper semi-continuous bounded sub-solution of
\begin{equation*}
  \big[|Dv|^{p(x)}+ a(x)|Dv|^{q(x)}\big]F(D^{2}v,x)+ h(x)|Dv|^{m}  = g(x)    \quad   \text{in}   \quad  B_{1}.
\end{equation*}
Assume that either $ u $ or $ v $ is Lipschitz continuous and that $ f < g $ in $ \overline{B_{1}} $. If $ u \leq v $ on $ \partial B_{1} $, then $ u \leq v $ in $ B_{1} $.
\end{proposition}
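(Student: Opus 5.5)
The plan is a contradiction argument by doubling of variables (Crandall--Ishii--Lions), following \cite[Proposition 4.1]{BD16} and \cite[Lemma 4.2]{BBLL24b}. The strict inequality $f<g$ on $\overline{B_1}$ will play the role of the strict monotonicity that the operator lacks. The Lipschitz hypothesis will control the gradient slots, so that the factors $|\cdot|^{p(x)}+a(x)|\cdot|^{q(x)}$ and $h(x)|\cdot|^{m}$ can be compared at the two doubled points.

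\textbf{Sign convention.} The argument needs the viscosity inequalities produced by the Ishii lemma to point in opposite directions for $u$ and $v$. At a positive interior maximum of $u(x)-v(y)$ penalised, the lemma gives a superjet of $u$ at $\hat x$ and a subjet of $v$ at $\hat y$. Definition \ref{subsection 2.1}, however, tests supersolutions with subjets. So the jets only combine if the definition is read with $F$ entering $G$ with the opposite sign, i.e.\ the convention under which an interior maximum of $u-v$ is ruled out. Under the convention of Definition \ref{subsection 2.1} exactly as written, $F(D^2u)\le f<g\le F(D^2v)$ only excludes an interior \emph{minimum} of $u-v$. That yields the ordering $v\le u$, not the stated one. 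I will therefore state explicitly that I work under the reversed reading, and that checking it is the first thing the final proof must settle. Assuming it holds, the steps below are carried out in order.

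\textbf{Steps.} Suppose $\sup_{B_1}(u-v)=\mu>0$. Since $u\le v$ on $\partial B_1$ and both functions are bounded and semicontinuous, the supremum is attained in the interior. Set
\[
\Psi_\varepsilon(x,y)=u(x)-v(y)-\frac{|x-y|^2}{2\varepsilon}.
\]
Standard arguments give maximisers $(x_\varepsilon,y_\varepsilon)$ that stay in a compact subset of $B_1$, with $|x_\varepsilon-y_\varepsilon|^2/\varepsilon\to0$. The Ishii lemma then provides a jet $(\eta_\varepsilon,\mathrm X)$ for $u$ at $x_\varepsilon$ and $(\eta_\varepsilon,\mathrm Y)$ for $v$ at $y_\varepsilon$, with common gradient $\eta_\varepsilon=(x_\varepsilon-y_\varepsilon)/\varepsilon$ and $\mathrm X\le\mathrm Y$ in the appropriate order. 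If, say, $v$ is Lipschitz, then $|\eta_\varepsilon|\le\mathrm{Lip}(v)$, so the gradients stay bounded.

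\textbf{Case split on $\eta_\varepsilon$.} If $\eta_\varepsilon=0$ along a subsequence, the definition forces the constant-function alternative. This is incompatible with $f<g$ at the limit point, since one function would have to satisfy $0\le f$ and the other $0\ge g$ on a common neighbourhood. If $\eta_\varepsilon\neq0$, I subtract the two viscosity inequalities. The degeneracy factor $\Theta(\eta,x)=|\eta|^{p(x)}+a(x)|\eta|^{q(x)}$ and the term $h(x)|\eta|^{m}$ are evaluated at the same $\eta_\varepsilon$ but at different points. Their differences tend to zero by continuity of $p,q,a,h$ on the bounded set $\{|\eta|\le\mathrm{Lip}\}$. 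The difference $\Theta(\eta_\varepsilon,x_\varepsilon)F(\mathrm X,x_\varepsilon)-\Theta(\eta_\varepsilon,y_\varepsilon)F(\mathrm Y,y_\varepsilon)$ is controlled by \hyperref[A1]{\bf (A1)} together with $\mathrm{osc}_F\le \mathrm C_F|x_\varepsilon-y_\varepsilon|^{\theta}\|\mathrm X\|$. Passing to the limit then gives $g(\bar x)\le f(\bar x)$, contradicting $f<g$.

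\textbf{Main obstacle.} The hardest step is bounding $\|\mathrm X\|$ against the factor $|x_\varepsilon-y_\varepsilon|^\theta$, because the Ishii lemma only gives $\|\mathrm X\|\lesssim 1/\varepsilon$. To handle it, I would replace the quadratic penalty by $|x-y|^4/(4\varepsilon)$, or add a small strictly monotone perturbation of $v$, namely $v+\tau(|x|^2-1)$ with $\tau$ small. This perturbation keeps the strict gap between $f$ and $g$ and keeps the gradient away from zero, and it makes the error $\mathrm C_F|x_\varepsilon-y_\varepsilon|^{\theta}/\varepsilon\to0$.
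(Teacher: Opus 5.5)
You are right that under Definition~\ref{subsection 2.1} the printed statement is backwards, since $F$ is increasing and super-solutions are tested from below. The fix, however, is not to flip the sign of $F$: that does not change which jets test which function. The statement should conclude ``if $v\le u$ on $\partial B_1$, then $v\le u$ in $B_1$'', i.e.\ the usc sub-solution with the larger right-hand side $g$ lies below the lsc super-solution. This matches the stated semicontinuity and the way the result is used in Theorem~\ref{Thm3}. You then maximise $v(x)-u(y)-\cdots$, which is usc, so the maximum is attained.

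\textbf{First gap: the case $\eta_\varepsilon=0$.} Nothing in Definition~\ref{subsection 2.1} ``forces the constant-function alternative''. Its first alternative only quantifies over test functions with nonzero gradient. It therefore holds vacuously wherever every touching test function has zero gradient, so every constant is a super-solution of every equation. Taken literally, the definition even makes the corrected comparison false. Take $F=\operatorname{tr}$, $a=h=0$, $u\equiv0$ with $f\equiv-1$, and $v=c(1-|x|^2)$ with $g\equiv-\tfrac12$ and $c$ small. All hypotheses hold, yet $v(0)>u(0)$. You need the standard definition, which admits zero-gradient tests. This is legitimate here because $p(x)>0$ and $m>0$ make the operator continuous and equal to zero at $\eta=0$. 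With it, $\eta_\varepsilon=0$ gives $g(x_\varepsilon)\le 0\le f(y_\varepsilon)$ directly.

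\textbf{Main gap: the case $\eta_\varepsilon\neq0$.} Neither of your remedies closes it.
\begin{itemize}
\item With the penalty $|x-y|^q/(q\varepsilon)$, the Ishii lemma only gives $\|\mathrm X\|\lesssim (q-1)|x_\varepsilon-y_\varepsilon|^{q-2}/\varepsilon=(q-1)|\eta_\varepsilon|/|x_\varepsilon-y_\varepsilon|$. The error $\mathrm C_F|x_\varepsilon-y_\varepsilon|^{\theta}\|\mathrm X\|$ is then only bounded by a multiple of $|\eta_\varepsilon|\,|x_\varepsilon-y_\varepsilon|^{\theta-1}$, for every $q$. This diverges whenever $|\eta_\varepsilon|$ stays away from $0$.
\item Adding $\tau(|x|^2-1)$ gains only a fixed amount of order $\lambda\tau\,\Theta(\eta_\varepsilon,\cdot)$. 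That cannot absorb a divergent error, and it does not keep $\eta_\varepsilon$ away from $0$.
\item The same defect sits in $[\Theta(\eta_\varepsilon,x_\varepsilon)-\Theta(\eta_\varepsilon,y_\varepsilon)]F(\mathrm Y,y_\varepsilon)$. Continuity of $p,q,a$ makes the bracket small, but $\|\mathrm Y\|$ is of order $1/\varepsilon$. You must divide by $\Theta$ first. For variable exponents you then still need $\Theta(\eta_\varepsilon,x_\varepsilon)/\Theta(\eta_\varepsilon,y_\varepsilon)\to1$, which is not automatic as $\eta_\varepsilon\to0$.
\end{itemize}

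\textbf{(A2)} is not a structure condition compatible with the Ishii matrix inequality, as $x$-independence of $F$ or the structure condition (3.14) of Crandall--Ishii--Lions would be. So doubling variables needs a genuinely different input here. The paper gives no argument either, only a reference.

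For the one application, Theorem~\ref{Thm3}, doubling can be avoided. There the super-solution is the explicit barrier $\mathrm C_0|x|^{\beta''}$. At an interior maximum of $u_{r,x_0}-\mathrm C_0|x|^{\beta''}$ away from the origin, test $u_{r,x_0}$ with the barrier plus a constant. This contradicts the barrier's strict inequality at a single point, where the $x$-dependence of $F$ plays no role. The origin, where the barrier is not $C^2$, needs a separate argument.
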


Since the proof is similar to that of Proposition~\ref{Section3:prop1} and is completely standard, we omit it and refer the reader to \cite[Proposition 4.1]{BD16}.

With Proposition~\ref{Section8:prop1} in hand, we give the proof of Theorem~\ref{Thm3}.

\begin{proof}[Proof of Theorem \ref{Thm3}]
Firstly, for $ x_{0} \in \Omega' \Subset \Omega $, we define a auxiliary function as follows
\begin{equation*}
  u_{r, x_{0}}(x):= \frac{u(x_{0}+r x)-u(x_{0})+\epsilon}{r^{\frac{2+\theta+p_{\mathrm{min}}}{1+p_{\mathrm{min}}}}},   \quad   x \in B_{1}(0)
\end{equation*}
with a small $ \epsilon > 0 $. Direct calculation yields that $  u_{r, x_{0}}  $ is a viscosity solution to
\begin{equation}\label{Section8:eq1}
  [|Du_{r, x_{0}}|^{\widetilde{p}(x)} + \widetilde{a}(x)|Du_{r, x_{0}}|^{\widetilde{q}(x)} ] \widetilde{F}(D^{2}u_{r,x_{0}}, x)  +  \widetilde{h}(x) |Du_{r, x_{0}}|^{m} = \widetilde{f}(x)   \quad   \text{in}  \quad  B_{1}(0),
\end{equation}
where
\begin{equation*}
\left\{
     \begin{aligned}
& \widetilde{F}(\mathrm{X}, x):= r^{2-\beta''} F(r^{\beta''-2}\mathrm{X}, x_{0}+rx),   \quad  \beta'':=\frac{2+\theta+p_{\mathrm{min}}}{1+p_{\mathrm{min}}}       \\
& \widetilde{h}(x):= h(x_{0}+rx) r^{(\beta''-1)[m-\widetilde{p}(x)]+2-\beta''} ;    \\
& \widetilde{f}(x):= f(x_{0}+rx) r^{(2-\beta'')-(\beta''-1)\widetilde{p}(x)}     ;    \\
&  \widetilde{a}(x):= a(x_{0}+rx) r^{(\beta''-1)[\widetilde{q}(x)-\widetilde{p}(x)]};   \\
& \widetilde{p}(x):= p(x_{0}+rx)  \quad   \text{and}   \quad  \widetilde{q}(x):= q(x_{0}+rx).
\end{aligned}
     \right.
\end{equation*}

We now choose a comparison function:
\begin{equation*}
  \Theta(x):= \mathrm{C}_{0} |x|^{\beta''},
\end{equation*}
with the constant $ \mathrm{C}_{0} > 0 $ chosen to satisfy
\begin{equation}\label{Section8:eq2}
  [|D\Theta|^{\widetilde{p}(x)} + \widetilde{a}(x)|D \Theta|^{\widetilde{q}(x)} ] \widetilde{F}(D^{2}\Theta, x)  +  \widetilde{h}(x) |D\Theta|^{m} <  \widetilde{f}(x)   \quad   \text{in}  \quad  B_{1}(0)
\end{equation}
We compute
\begin{align*}
& D_{i}\Theta=\mathrm{C}_{0} \beta''|x|^{\beta''-2}x_{i}; \quad  i=1,2,\cdots, n  \\
& D_{ij} \Theta= \mathrm{C}_{0}\beta''\bigg[\delta_{ij}+(\beta''-2)\frac{x_{i}x_{j}}{|x|^{2}}   \bigg]|x|^{\beta''-2}, \quad i,j=1,2,\cdots, n   \\
& \quad \quad  := \mathrm{C}_{0}\beta'' |x|^{\beta''-2} B_{ij},
\end{align*}
where
\begin{equation*}
     B_{ij}:= \bigg[\delta_{ij}+(\beta''-2)\frac{x_{i}x_{j}}{|x|^{2}}\bigg].
\end{equation*}

 Observed that Hessian matrix $ (B_{ij})$ has a simple eigenvalue $ \beta''-1 $ and an eigenvalue $ 1 $ with multiplicity $ n-1 $. This along with \hyperref[A1]{\bf (A1)} leads to
\begin{equation}\label{Section8:eq3}
  \widetilde{F}(D^{2}\Theta, x) \leq \mathscr{P}^{+}_{\lambda,\Lambda}(D^{2}\Theta) \leq \Lambda (n+\beta''-2)\mathrm{C}_{0}\beta''|x|^{\beta''-2}.
\end{equation}

Next, we calculate the degenerate term
\begin{align}\label{Section8:eq4}
\begin{split}
& [|D\Theta|^{\widetilde{p}(x)} + \widetilde{a}(x)|D \Theta|^{\widetilde{q}(x)} ]   \\
& \leq  \bigg[(\mathrm{C}_{0}\beta'')^{\widetilde{p}(x)}  + ||a||_{\infty} r^{(\beta''-1)[\widetilde{q}(x)-\widetilde{p}(x)]}(\mathrm{C}_{0}\beta'')^{\widetilde{q}(x)}|x|^{(\beta''-1)[\widetilde{q}(x)-\widetilde{p}(x)]}    \bigg]|x|^{(\beta''-1)\widetilde{p}(x)}
\end{split}
\end{align}
and Hamiltonian terms
\begin{align*}
|\widetilde{h}(x)|  \leq ||h||_{\infty} r^{(\beta''-1)[m-\widetilde{p}(x)]+2-\beta'} (\mathrm{C}_{0}\beta'')^{m}|x|^{(\beta''-1)m}
\end{align*}
which, together with \eqref{Section8:eq3} and \eqref{Section8:eq4}, yields
\begin{align}\label{Section8:eq5}
\begin{split}
& [|D\Theta|^{\widetilde{p}(x)} + \widetilde{a}(x)|D \Theta|^{\widetilde{q}(x)} ] \widetilde{F}(D^{2}\Theta, x)  +  \widetilde{h}(x) |D\Theta|^{m}      \\
& \leq   \bigg[(\mathrm{C}_{0}\beta'')^{\widetilde{p}(x)}  + ||a||_{\infty} r^{(\beta''-1)[\widetilde{q}(x)-\widetilde{p}(x)]}(\mathrm{C}_{0}\beta'')^{\widetilde{q}(x)}|x|^{(\beta'-1)[\widetilde{q}(x)-\widetilde{p}(x)]}    \bigg]\Lambda (n+\beta''-2)        \\
 &    \quad       \times \mathrm{C}_{0}\beta'' |x|^{(\beta''-1)\widetilde{p}(x)+(\beta''-2)}   + ||h||_{\infty} r^{(\beta''-1)[m-\widetilde{p}(x)]+2-\beta''} (\mathrm{C}_{0}\beta'')^{m}|x|^{(\beta''-1)m}         \\
 &  \leq  \bigg[ (\mathrm{C}_{0}\beta')^{\widetilde{p}(x)}  + ||a||_{\infty}  (\mathrm{C}_{0}\beta')^{\widetilde{q}(x)}     \bigg]\Lambda (n+\beta''-2)\mathrm{C}_{0} \beta''+ ||h||_{\infty}(\mathrm{C}_{0} \beta'')^{m}
 \end{split}
\end{align}
where we have used three inequalities:
\begin{equation*}
\left\{
     \begin{aligned}
& (\beta''-1)[\widetilde{q}(x)-\widetilde{p}(x)]  \geq 0;      \\
&  (\beta''-1)\widetilde{p}(x) +(\beta''-2) = \theta > 0;    \\
& (\beta''-1)[m-\widetilde{p}(x)]+2-\beta''  \geq 0  \quad  \left(\text{since} \  \beta'' \leq \frac{2+p_{\mathrm{min}}-m}{1+p_{\mathrm{max}}-m}\right)
\end{aligned}
     \right.
\end{equation*}
in the second inequality.

Now we distinguish two cases:

{\boxed{\text{Case 1}.}} $ \mathrm{C}_{0} \geq 1 $. It is immediately seen that
\begin{align*}
\text{Left of} \ \eqref{Section8:eq5} \leq \bigg[ (\mathrm{C}_{0}\beta'')^{p_{\mathrm{max}}} + ||a||_{\infty}  (\mathrm{C}_{0}\beta'')^{q_{\mathrm{max}}}   \bigg] \Lambda (n+\beta''-2)\mathrm{C}_{0} \beta''+ ||h||_{\infty}(\mathrm{C}_{0} \beta'')^{m}
\end{align*}

At this point, we define a function $ \widetilde{g}: [0, \infty) \rightarrow \mathbb{R} $ given by
\begin{equation*}
  \widetilde{g}(t):= \Lambda (n+\beta''-2) \bigg[ 1+||a||_{\infty} t^{q_{\mathrm{max}}-p_{\mathrm{max}}}  (\beta'')^{q_{\mathrm{max}}-p_{\mathrm{max}}}   \bigg] t^{1+p_{\mathrm{max}}}(\beta'')^{1+p_{\mathrm{max}}} + ||h||_{\infty}(\beta'')^{m} t^{m}  - \inf_{x \in \Omega} f(x).
 \end{equation*}
Observing that $ \widetilde{g}(0) < 0 $ and $ \widetilde{g}(t) \rightarrow \infty $ as $ t \rightarrow \infty $, by the Intermediate Value Theorem, there exists at least one root for $ \widetilde{g}(t) = 0 $ in $ (0, \infty) $. Let $ T' $ denote the smallest such root, then one may select a $ \mathrm{C}_{0} \in (0, T') $ such that $ \widetilde{g}(\mathrm{C}_{0}) < 0  $, namely,
\begin{equation*}
   [|D\Theta|^{\widetilde{p}(x)} + \widetilde{a}(x)|D \Theta|^{\widetilde{q}(x)} ] \widetilde{F}(D^{2}\Theta, x)  +  \widetilde{h}(x) |D\Theta|^{m} <  \widetilde{f}(x)   \quad   \text{in}  \quad  B_{1}(0).
\end{equation*}
In this setting, we prove \eqref{Section8:eq2}.

{\boxed{\text{Case 2}.}} $ \mathrm{C}_{0} < 1 $. Analogous to the argument in Case 1, it is straightforward to verify that
\begin{align*}
\text{Left of} \ \eqref{Section8:eq5} \leq \bigg[ (\mathrm{C}_{0})^{p_{\mathrm{min}}}(\beta'')^{p_{\mathrm{max}}} + ||a||_{\infty}  (\mathrm{C}_{0})^{q_{\mathrm{min}}}(\beta'')^{q_{\mathrm{max}}}   \bigg] \Lambda (n+\beta''-2)\mathrm{C}_{0} \beta''+ ||h||_{\infty}(\mathrm{C}_{0} \beta'')^{m}.
\end{align*}

In this setting, we define a function $ \widetilde{\widetilde{g}}: [0, \infty) \rightarrow \mathbb{R} $ as follows
\begin{equation*}
  \widetilde{\widetilde{g}}(t):= \Lambda (n+\beta''-2) \beta'' \bigg[t^{1+p_{\mathrm{min}}} (\beta'')^{p_{\mathrm{max}}} + ||a||_{\infty} t^{1+q_{\mathrm{min}}}(\beta'')^{q_{\mathrm{max}}}    \bigg] + ||h||_{\infty} (\beta'')^{m} t^{m}  - \inf_{x \in \Omega} f(x).
\end{equation*}
The remaining argument proceeds as in Case 1, hence we also obtain \eqref{Section8:eq2}.

Finally, if $ u_{r, x_{0}} \leq  \Theta $ on $ \partial B_{1}(0) $, then the comparison principle (Proposition \ref{Section8:prop1}) ensures that $ u_{r, x_{0}} \leq  \Theta $ in $ B_{1}(0)$,  contradicting $ u_{r, x_{0}}(0) > 0 $. Consequently, we can find a point $ z \in \partial B_{1}(0) $ satisfying
\begin{equation*}
u_{r,x_0}(z) > \Theta(z) = \mathrm{C}_{0}.
\end{equation*}
By rescaling and letting $ \epsilon \rightarrow 0 $,  the proof is complete.
\end{proof}

\section{Applications}\label{Section 7}
In this section, we discuss how our main regularity results can be applied to derive sharp global estimates for two cornerstone operators: the $ \infty $-Laplacian and the $ p$-Laplacian. By assuming a suitable viscosity curvature condition on the level sets of the solution, we provide a new perspective for addressing these long-standing regularity conjectures.

\subsection{Global $ C^{1, \frac{1}{3}} $ regularity for infinity-Poisson}
The infinity Laplacian operator, defined as
\begin{equation*}
  \Delta_{\infty} u:= \Sigma_{i,j=1}^{n} u_{i} u_{j} u_{ij} = (Du)^{T} D^{2}u Du,
\end{equation*}
arises naturally in the study of best Lipschitz extensions and has been a subject of intense investigation since the pioneering work of Aronsson in the 1960s.

A celebrated open conjecture in this field is whether infinity-harmonic functions satisfy a universal $C^{1, \frac{1}{3}}$ regularity estimate. This threshold is suggested by Aronsson's famous example $a(x, y):= x^{\frac{4}{3}} - y^{\frac{4}{3}}$, which is infinity-harmonic but possesses a gradient that is only $C^{0, \frac{1}{3}}$ continuous. While significant breakthroughs have been achieved, such as $C^1$ regularity in the plane by Savin \cite{Sa05}, and in two-dimensional case, Evans and Savin \cite{ESa08} sharpened the result to $C^{1,\alpha}$ for some small $\alpha > 0$. Furthermore, everywhere differentiability in all dimensions by Evans and Smart \cite{ES11}, the sharp $C^{1, \alpha}$ regularity in high dimensions remains one of the most challenging open problems.

In the 2015s, Ara\'{u}jo-Ricarte-Teixeira \cite{ART15} obtained sharp interior $C^{1, \frac{1}{3}}$ estimates for infinity-harmonic functions by assuming specific structural profiles, such as {\it separable variables} (cf. \cite[Proposition 4.2]{ART15}) or {\it radial asymptotic behavior near singular points} (cf. \cite[Theorem 4.3]{ART15}). These assumptions allow the operator to be treated under a power-type scaling regime.

While significant progress has been made regarding interior regularity, the boundary regularity theory for the infinity Laplacian remains markedly underdeveloped, with only a few results currently available in the literature. Seminal work by Wang-Yu \cite{WY12} established the first boundary regularity results for infinity harmonic functions in high dimensions, assuming $C^1$ regularity for both the domain and the boundary data. Subsequently, Hong \cite{H13} relaxed these requirements, weakening the $C^1$ assumptions on the domain and data to mere everywhere differentiability. However, the optimal $C^{1, \frac{1}{3}}$ regularity up to the boundary in arbitrary dimensions remains unknown.

In contrast to these structural profiles \cite[Proposition 4.2, Theorem 4.3]{ART15}, in this paper, we provide an intrinsic geometric approach based on the properties of the solution's level sets. To formulate this geometric information in the viscosity setting, we introduce the following nonzero-gradient curvature condition.

\begin{definition}[Nonzero-gradient viscosity curvature condition]\label{Section7:def-curvature}

Let $u\in C^{0}(B_1)$ and $k\in C^{0}(B_1)\cap L^\infty(B_1)$.
For $\varphi\in C^2(B_1)$ and a point $x_0 \in B_{1}$ with
$D\varphi(x_0)\neq0$, write
\[
\mathcal K_\varphi(x_0)
:=
\frac{1}{|D\varphi(x_0)|}
\operatorname{tr}\!\left[
\left(
\textbf{I}\mathrm{d_{n}}-
\frac{
D\varphi(x_0)\otimes D\varphi(x_0)
}{
|D\varphi(x_0)|^2
}
\right)
D^2\varphi(x_0)
\right].
\]
We say that $u$ satisfies the curvature condition associated with $k$
if the following two requirements hold:
\begin{enumerate}[label=(\roman*)]

\item Whenever $u-\varphi$ has a local maximum at $x_0$ and
$D\varphi(x_0)\neq0$, one has
\[
\mathcal K_\varphi(x_0)\ge k(x_0).
\]

\item Whenever $u-\varphi$ has a local minimum at $x_0$ and
$D\varphi(x_0)\neq0$, one has
\[
\mathcal K_\varphi(x_0)\le k(x_0).
\]

\end{enumerate}
\end{definition}

\begin{remark}
This condition imposes no requirement on tests with zero gradient.
The infinity-Poisson or $p$-Poisson equation is understood
in the usual viscosity sense, including tests with zero gradient.
\end{remark}

\begin{remark}
At points where $u$ is $C^2$ and $Du\neq0$, the preceding condition
is equivalent to
\[
\operatorname{div}\!\left(\frac{Du}{|Du|}\right)=k(x).
\]
Indeed, at an upper contact point,
$D\varphi=Du$ and $D^2\varphi-D^2u$ is positive semi-definite.
Writing $e=Du/|Du|$, we obtain
\[
\mathcal K_\varphi-\mathcal K_u
=
\frac{1}{|Du|}
\operatorname{tr}\!\left[
(\textbf{I}\mathrm{d_{n}}-e\otimes e)(D^2\varphi-D^2u)
\right]
\ge0.
\]
The inequality is reversed at a lower contact point.
Here mean curvature denotes the sum of the principal curvatures,
with orientation determined by $ Du/|Du| $.

\end{remark}

Now we give

\begin{theorem}
\label{Section 7:thm1}    
Let $u\in C^{0}(\overline{B_1})$ be a viscosity solution of
\begin{equation}\label{Section7:eq1}
\begin{cases}
\Delta_{\infty}u=f(x) &  \quad   \text{in }   \quad B_{1},\\
u=g & \quad   \text{on } \quad     \partial B_{1},
\end{cases}
\end{equation}
where
\[
f\in C^{0}(B_1)\cap L^\infty(B_1),
\qquad
g\in C^{1,1}(\partial B_1).
\]
Assume that there exists
$k\in C^{0}(B_1)\cap L^\infty(B_1)$ such that
$u$ satisfies Definition~\ref{Section7:def-curvature}.
Then
\[
u\in C^{1,\frac13}(\overline{B_1}).
\]

\end{theorem}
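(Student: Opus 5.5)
The strategy is to use the curvature condition to turn the infinity-Poisson equation into an equation of the form \eqref{Main:eq1}. Corollary~\ref{Section1:coro1} then applies with $F(\mathrm{X})=\operatorname{tr}\mathrm{X}$ (the Laplacian, which is linear, hence convex, and uniformly elliptic with $\lambda=\Lambda=1$), $\Phi(t,x)=t^{2}$ (so $i(\Phi)=s(\Phi)=2$, $L=1$, $\nu_0=\nu_1=1$), and the exponent $\frac{1}{1+s(\Phi)}=\frac13$.

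\textbf{Step 1: the key pointwise identity.} For $\varphi\in C^2$ with $D\varphi(x_0)\neq0$,
\[
|D\varphi|^{2}\Delta\varphi=\Delta_\infty\varphi+|D\varphi|^{2}\operatorname{tr}\Big[\Big(\textbf{I}\mathrm{d_{n}}-\tfrac{D\varphi\otimes D\varphi}{|D\varphi|^{2}}\Big)D^{2}\varphi\Big]=\Delta_\infty\varphi+|D\varphi|^{3}\mathcal K_\varphi(x_0).
\]
At a local maximum of $u-\varphi$ with $D\varphi(x_0)\ne0$, the viscosity subsolution property gives $\Delta_\infty\varphi(x_0)\ge f(x_0)$. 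Definition~\ref{Section7:def-curvature}(i) gives $\mathcal K_\varphi(x_0)\ge k(x_0)$. Together,
\[
|D\varphi|^{2}\Delta\varphi-k(x_0)|D\varphi|^{3}\ge f(x_0).
\]
The reversed inequalities hold at minima. So, in the nonzero-gradient sense of Definition~\ref{subsection 2.1}, $u$ solves
\[
|Du|^{2}\Delta u+H(|Du|,x)=f,\qquad H(t,x):=-k(x)|t|^{3}.
\]
Then \hyperref[A4]{\bf (A4)} holds with $\mathcal M_1$ arbitrarily small, $\mathcal M_2=\|k\|_\infty$ and $m=3=1+i(\Phi)$. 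This is the borderline case (ii) of Theorem~\ref{Thm1}, and it is allowed. Assumptions \hyperref[A2]{\bf (A2)}, \hyperref[A5]{\bf (A5)} and \hyperref[A6]{\bf (A6)} are immediate: $F$ has no $x$-dependence, $g\in C^{1,1}$ is $C^{1,\beta_g}$ for every $\beta_g<1$, and $B_1$ is smooth.

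\textbf{Step 2: the zero-gradient tests.} Definition~\ref{subsection 2.1} also includes the alternative that $u$ is locally constant with the sign condition on $f$. The remark after Definition~\ref{Section7:def-curvature} says the infinity-Poisson equation is understood in the full viscosity sense, and I would check that this suffices. If $u$ is constant near $x_0$, testing with constants gives $0=\Delta_\infty u\ge f$ and $\le f$, so $f=0$ there, and the second alternative holds. More generally, the scheme of Sections~\ref{Section 3}--\ref{Section 4} (Ishii--Lions, the barriers of Lemma~\ref{Section3:lemma2}, and the cutting Lemma~\ref{Section 2:lem1}) only ever uses test functions with nonzero gradient, as in \cite{IS13}. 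So the reduced equation is admissible.

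\textbf{Step 3: invoking the corollary.} Since $F$ is linear, the homogeneous problem $\Delta h=0$ with $C^{1,1}$ boundary data is smooth up to the boundary. The constant $\alpha_0$ can therefore be taken arbitrarily close to $1$, so $\frac13<\alpha_0$ and the intersection in Corollary~\ref{Section1:coro1} is exactly $\{\frac13\}$. The $L^\infty$ bound on $u$ is either assumed ($u\in C^0(\overline{B_1})$) or follows from Theorem~\ref{Section 2:thm1}. Corollary~\ref{Section1:coro1} then yields $u\in C^{1,\frac13}(\overline{B_1})$, with bounds depending on $\|u\|_\infty$, $\|g\|_{C^{1,1}}$, $\|f\|_\infty$ and $\|k\|_\infty$.

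\textbf{Main obstacle.} The main difficulty is the endpoint choice $\alpha=\frac1{1+s(\Phi)}$ together with $m=1+i(\Phi)$. In the iteration of Lemma~\ref{Section4:lem3}, the Hamiltonian coefficient scales like $\rho^{j[1-\alpha(1+s(\Phi)-i(\Phi))]}(1+|b_j|)$. At $\alpha=\frac13$ this is $\rho^{2j/3}$, so it stays small, and the $f$-scaling $\rho^{j(1-3\alpha)}=1$ is exactly critical. The smallness of $\mathcal M_2(1+|\xi|)$ must therefore be arranged once, through the initial rescaling of Remark~\ref{Section3:rmk1} (shrinking $r$ against $\|k\|_\infty$), and then preserved. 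I would verify carefully that the convex-$F$ improvement to the endpoint, i.e.\ replacing $\alpha_0$ by the Evans--Krylov $C^{2,\bar\alpha}$ estimate in Lemma~\ref{Section4:lem2}, survives when $\xi_j=b_j\rho^{-j\alpha}$ is unbounded. Proposition~\ref{Section3:Thm5} is stated only for $m\le i(\Phi)$, so in this regime I would first try to handle large $|\xi_j|$ by rewriting the equation as a uniformly elliptic one, following Case 2 of the proof of Theorem~\ref{Thm2}.
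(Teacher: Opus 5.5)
Your proposal follows the paper's proof. It uses the identity $|D\varphi|^{2}\Delta\varphi=\Delta_\infty\varphi+|D\varphi|^{3}\mathcal K_\varphi$ together with Definition~\ref{Section7:def-curvature} to show that $u$ solves $|Du|^{2}\Delta u-k(x)|Du|^{3}=f$, and then applies Corollary~\ref{Section1:coro1} with $\Phi(t,x)=t^{2}$, $F=\operatorname{tr}$, $H=-k(x)|t|^{3}$ and $m=3=1+i(\Phi)$. The differences are minor: for zero-gradient tests the paper simply notes $\Delta_\infty\varphi(x_0)=0$, so the original inequality already reads $0\ge f(x_0)$ (resp.\ $0\le f(x_0)$), which is exactly the reduced equation's inequality; and your ``main obstacle'' checks concern the machinery behind Corollary~\ref{Section1:coro1}, which the paper's proof simply invokes.
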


\begin{proof}

We first prove that $u$ is a viscosity solution of
\begin{equation}\label{Section7:thm-eq1}
|Du|^2\Delta u-k(x)|Du|^3 = f(x)
\quad   \text{in}  \quad  B_1.
\end{equation}
Here we only prove the sub-solution case, and the remaining case follows similarly. Indeed, let $\varphi\in C^2(B_1)$ touch $u$ from above at $x_0\in B_1$.
The viscosity sub-solution property of the infinity-Poisson equation gives
\[
\Delta_\infty\varphi(x_0)\ge f(x_0).
\]
If $D\varphi(x_0)\neq0$, the geometric identity
\[
|D\varphi|^2\Delta\varphi
=
\Delta_\infty\varphi
+
|D\varphi|^3\mathcal K_\varphi
\]
and Definition~\ref{Section7:def-curvature} yield
\[
\begin{aligned}
&|D\varphi(x_0)|^2\Delta\varphi(x_0)
-k(x_0)|D\varphi(x_0)|^3\\
&\quad=
\Delta_\infty\varphi(x_0)
+
|D\varphi(x_0)|^3
\bigl(
\mathcal K_\varphi(x_0)-k(x_0)
\bigr)\\
&\quad\ge f(x_0).
\end{aligned}
\]
If $D\varphi(x_0)=0$, then
$\Delta_\infty\varphi(x_0)=0\ge f(x_0)$, which is exactly the
required sub-solution inequality for~\eqref{Section7:thm-eq1}. 
Hence we shown that $ u $ is a viscosity sub-solution to \eqref{Section7:thm-eq1}.   

Noticing that \eqref{Section7:thm-eq1} has the structure of the general equation in Corollary~\ref{Section1:coro1}, with
\[
\Phi(t,x)=t^2,
\qquad
F(\mathrm{M},x)=\operatorname{tr}\mathrm{M},
\qquad
H(|t|,x)=-k(x)|t|^3, \qquad  k\in C^{0}(B_1)\cap L^\infty(B_1),
\]
thus Corollary~\ref{Section1:coro1} gives  
\begin{equation*}
  u\in C^{1,\frac13}(\overline{B_1}).
\end{equation*}
\end{proof}

\subsection{Global $ C^{1,\frac{1}{p-1}} $ regularity for $ p$-Poisson ($ p > 2$)}

The regularity theory for the $p$-Laplacian has been extensively
developed over the past decades. It is classical that solutions to
the $p$-Poisson equation
\[
-\Delta_p u=f
\]
with bounded source terms are locally of class $C^{1,\alpha}$ for
some $\alpha=\alpha(n,p)\in(0,1)$. A natural radial example shows
that, when $p>2$, the exponent $\frac{1}{p-1}$ is the optimal
regularity threshold for bounded right-hand sides. This motivates
the so-called $C^{p'}$-regularity conjecture, which predicts
\[
u\in C_{\mathrm{loc}}^{1,\frac{1}{p-1}}.
\]

For $n=2$, Ara\'ujo, Teixeira and Urbano \cite{ATU17} established the
optimal interior $C^{1,\frac{1}{p-1}}$ regularity for the $p$-Poisson
equation with bounded source terms. For $n\geq 2$, they further
obtained in \cite{ATU18} the same regularity for radially symmetric
solutions and for solutions with no saddle critical points, while
the general higher-dimensional case remains open.

Concerning boundary regularity, Lieberman \cite{L88} established
global $C^{1,\alpha}$ estimates for a broad class of degenerate
elliptic equations under suitable assumptions on the domain and the
boundary data. For $n=2$, Haque \cite{H18} obtained sharp boundary
regularity for generalized $p$-Poisson equations with $f\in L^q$,
$q>2$, under both Dirichlet and Neumann boundary conditions, with
the optimal exponent
\[
\alpha=\frac{1-\frac{2}{q}}{p-1},
\]
which reduces to $\frac{1}{p-1}$ when $f\in L^\infty$. In higher
dimensions, the corresponding sharp global regularity is not known
in general. In contrast to the structural conditions considered in
\cite{ATU18}, we formulate a new geometric condition, and obtain the global optimal regularity
\[
u\in C^{1,\frac{1}{p-1}}(\overline{B_1}).
\]

Now we state the main result in this section.

\begin{theorem}\label{Section 7:thm2}
Let $ u \in C^{0}(\overline{B_{1}})$ be a viscosity solution to
\begin{equation*}
\left\{
     \begin{alignedat}{2}
           \Delta_{p} u & = f(x)         \quad   &&   \text{in} \quad B_{1},    \\
          u(x)   &  = g(x)   \quad  &&   \text{on} \ \  \partial B_{1},        \\
     \end{alignedat}
     \right.
\end{equation*}
where
\[
f\in C^{0}(B_1)\cap L^\infty(B_1),
\qquad
g\in C^{1,1}(\partial B_1).
\]
Assume that $u$ satisfies Definition~\ref{Section7:def-curvature}
for some $k\in C^{0}(B_1)\cap L^\infty(B_1)$.
Then
\[
u\in C^{1,\frac{1}{p-1}}(\overline{B_1}).
\]

\end{theorem}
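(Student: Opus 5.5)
The plan mirrors the proof of Theorem~\ref{Section 7:thm1}. First, rewrite the $p$-Laplacian in non-divergence form. For a $C^2$ test function $\varphi$ with $D\varphi(x_0)\neq 0$,
\[
\Delta_p\varphi=|D\varphi|^{p-2}\Delta\varphi+(p-2)|D\varphi|^{p-4}\Delta_\infty\varphi .
\]
Insert the identity $\Delta_\infty\varphi=|D\varphi|^2\Delta\varphi-|D\varphi|^3\mathcal K_\varphi$ used in the proof of Theorem~\ref{Section 7:thm1}. This gives
\[
\Delta_p\varphi=(p-1)|D\varphi|^{p-2}\Delta\varphi-(p-2)|D\varphi|^{p-1}\mathcal K_\varphi .
\]
The aim is to show that $u$ is a viscosity solution of
\[
(p-1)|Du|^{p-2}\Delta u-(p-2)k(x)|Du|^{p-1}=f(x)\quad\text{in } B_1 .
\]

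For the sub-solution property, let $\varphi$ touch $u$ from above at $x_0$. If $D\varphi(x_0)\ne0$, the sub-solution inequality $\Delta_p\varphi(x_0)\ge f(x_0)$ holds. Part (i) of Definition~\ref{Section7:def-curvature} gives $\mathcal K_\varphi(x_0)\ge k(x_0)$, and since $p>2$ the factor $(p-2)|D\varphi|^{p-1}$ is nonnegative. Hence
\[
(p-1)|D\varphi|^{p-2}\Delta\varphi-(p-2)k|D\varphi|^{p-1}=\Delta_p\varphi+(p-2)|D\varphi|^{p-1}(\mathcal K_\varphi-k)\ge f(x_0).
\]
If $D\varphi(x_0)=0$, both operators vanish because $p>2$. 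The viscosity inequality for $\Delta_p$ then reads $0\ge f(x_0)$, which is exactly what is needed for the new equation. The super-solution case uses part (ii) of the definition with all inequalities reversed.

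The new equation fits the framework of Corollary~\ref{Section1:coro1} with the following choices:
\[
\Phi(t,x)=(p-1)t^{p-2},\qquad F(\mathrm M,x)=\operatorname{tr}\mathrm M,\qquad H(t,x)=-(p-2)k(x)|t|^{p-1}.
\]
The data satisfy the hypotheses as follows.
\begin{itemize}
\item $F$ is linear and uniformly elliptic, so it is concave, independent of $x$, and satisfies (A1)--(A2).
\item $\Phi$ satisfies (A3) with $i(\Phi)=s(\Phi)=p-2\ge0$ and $L=1$, and with $\nu_0=\nu_1=p-1$.
\item $H$ satisfies (A4) with $m=p-1=1+i(\Phi)$, which is the borderline case allowed, $\mathcal M_1=0$ (or arbitrarily small) and $\mathcal M_2=(p-2)\|k\|_\infty$.
\item $f$ satisfies (A5), and $\partial B_1$ is smooth, so (A6) holds.
\end{itemize}
With $g\in C^{1,1}$, Corollary~\ref{Section1:coro1} yields $u\in C^{1,\alpha}(\overline{B_1})$ with $\alpha=\frac1{1+s(\Phi)}=\frac1{p-1}$.

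The main point requiring care is the sign bookkeeping in the viscosity verification. The inequality $\mathcal K_\varphi\ge k$ must enter with a positive coefficient, and that coefficient is $(p-2)|D\varphi|^{p-1}\ge0$, which uses $p>2$. The zero-gradient tests must also be handled correctly. Here too $p>2$ is used, since it makes $|D\varphi|^{p-2}\Delta\varphi$ well defined and zero at critical points. If the convention is $-\Delta_p u=f$ instead, the proof is identical after replacing $f$ by $-f$.
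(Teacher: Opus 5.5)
Your proposal is correct and matches the paper's proof step for step. Both use the identity $\Delta_p\varphi=(p-1)|D\varphi|^{p-2}\Delta\varphi-(p-2)|D\varphi|^{p-1}\mathcal K_\varphi$, the same $p>2$ sign argument for nonzero- and zero-gradient test functions, and the same appeal to Corollary~\ref{Section1:coro1} with $i(\Phi)=s(\Phi)=p-2$ and $m=p-1=1+i(\Phi)$. The only difference is cosmetic: the paper divides by $p-1$ (taking $\Phi=t^{p-2}$, $H=-\frac{p-2}{p-1}k|t|^{p-1}$ and right-hand side $f/(p-1)$), while you keep the factor in $\Phi$ with $\nu_0=\nu_1=p-1$, which (A3) equally allows.
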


\begin{proof}

We first prove that $u$ is a viscosity solution of
\begin{equation}\label{Section7:thm-eq2}
|Du|^{p-2}\Delta u
-\frac{p-2}{p-1}k(x)|Du|^{p-1}
=\frac{f(x)}{p-1}
\quad  \text{in }  \quad  B_1.
\end{equation}
We only prove the sub-solution case, since the super-solution case follows by reversing the corresponding inequalities.
Let $\varphi\in C^2(B_1)$ touch $u$ from above at $x_0\in B_1$.
The viscosity sub-solution property of the $p$-Poisson equation gives
\[
\Delta_p\varphi(x_0)\ge f(x_0).
\]
If $D\varphi(x_0)\neq0$, we use the identities
\[
\Delta_p\varphi
=
|D\varphi|^{p-2}\Delta\varphi
+
(p-2)|D\varphi|^{p-4}\Delta_\infty\varphi
\]
and
\[
\Delta_\infty\varphi
=
|D\varphi|^2\Delta\varphi
-
|D\varphi|^3\mathcal K_\varphi
\]
to obtain
\[
\Delta_p\varphi
=
(p-1)|D\varphi|^{p-2}\Delta\varphi
-
(p-2)|D\varphi|^{p-1}\mathcal K_\varphi.
\]
Since $p>2$, Definition~\ref{Section7:def-curvature} yields
\[
\begin{aligned}
&|D\varphi(x_0)|^{p-2}\Delta\varphi(x_0)
-\frac{p-2}{p-1}k(x_0)|D\varphi(x_0)|^{p-1}\\
&\quad=
\frac{\Delta_p\varphi(x_0)}{p-1}
+
\frac{p-2}{p-1}|D\varphi(x_0)|^{p-1}
\bigl(
\mathcal K_\varphi(x_0)-k(x_0)
\bigr)\\
&\quad\ge\frac{f(x_0)}{p-1}.
\end{aligned}
\]
If $D\varphi(x_0)=0$, then $\Delta_p\varphi(x_0)=0$ because
$p>2$. Indeed, the map $q\mapsto |q|^{p-2}q$ is differentiable
at $q=0$, with derivative zero.
Consequently, the original sub-solution inequality gives
\[
0\ge f(x_0),
\]
which implies exactly the desired sub-solution inequality for
\eqref{Section7:thm-eq2}. Thus $u$ is a viscosity sub-solution of \eqref{Section7:thm-eq2}. The analogous argument proves the super-solution property.       

Observed that equation~\eqref{Section7:thm-eq2} has the structure required in Corollary~\ref{Section1:coro1}, with
\[
\Phi(t,x)=t^{p-2},
\qquad
F(\mathrm M,x)=\operatorname{tr}\mathrm M,
\qquad
H(|t|,x)=-\frac{p-2}{p-1}k(x)|t|^{p-1},
\]
and right-hand side $\frac{f}{p-1}$.
Since $k\in C^{0}(B_1)\cap L^\infty(B_1)$, the Hamiltonian term is
continuous and satisfies
\[
|H(|t|,x)|
\le
\frac{p-2}{p-1}\|k\|_{L^\infty(B_1)}t^{p-1}.
\]
Thus \hyperref[A4]{\bf (A4)} holds with
\[
i(\Phi)=s(\Phi)=p-2>0,
\qquad
m=p-1=1+i(\Phi).
\]
By applying
Corollary~\ref{Section1:coro1}, we have  
\[
u\in C^{1,\frac{1}{p-1}}(\overline{B_1}).
\]

\end{proof}

\end{document}